\DeclareSymbolFont{extraup}{U}{zavm}{m}{n}
\DeclareMathSymbol{\vardiamond}{\mathalpha}{extraup}{87}
\newcommand{\commment}[1]{}
\newcommand{\blhd}{\blacktriangleleft_f}
\newcommand{\brhd}{\blacktriangleright_g}
\newcommand{\K}{\mathsf{K}}
\newcommand{\jty}{J^{\infty}}
\newcommand{\mty}{M^{\infty}}
\newcommand{\diam}{\Diamond}
\newcommand{\nomi}{\mathbf{i}}
\newcommand{\nomj}{\mathbf{j}}
\newcommand{\nomk}{\mathbf{k}}
\newcommand{\cnomm}{\mathbf{m}}
\newcommand{\cnomn}{\mathbf{n}}
\newcommand{\cnomo}{\mathbf{o}}
\newcommand{\bigamp}{\mathop{\mbox{\Large \&}}}
\newcommand{\amp}{\mathop{\&}}
\newcommand{\x}{\mathcal{X}}
\renewcommand{\phi}{\varphi}
\renewcommand{\star}{*}
\newcommand{\ca}{\mathbb{A}^\delta}
\newcommand{\A}{\mathbb{A}}
\newcommand{\B}{\mathbb{B}}
\newcommand{\C}{\mathbb{C}}
\newcommand{\bba}{\mathbb{A}}
\newcommand{\bbas}{\mathbb{A}^{\delta}}
\newcommand{\kbbas}{K(\mathbb{A}^{\delta})}
\newcommand{\obbas}{O(\mathbb{A}^{\delta})}
\newcommand{\jir}{J^{\infty}(\bba^{\delta})}
\newcommand{\mir}{M^{\infty}(\bba^{\delta})}
\newcommand{\Diamondblack}{\vardiamond}
\renewcommand{\epsilon}{\varepsilon}
\newtheorem{theorem}{Theorem}[section]
\newtheorem{lemma}[theorem]{Lemma}
\newtheorem{prop}[theorem]{Proposition}
\newtheorem{cor}[theorem]{Corollary}
\newtheorem{definition}[theorem]{Definition}
\newtheorem{example}[theorem]{Example}
\newtheorem{remark}[theorem]{Remark}
\newtheorem{dfn}[theorem]{Definition}
\newtheorem{lem}[theorem]{Lemma}
\newtheorem{corollary}[theorem]{Corollary}
\title{Sahlqvist theory for impossible worlds}
\author[1,3]{Alessandra Palmigiano\thanks{The research of the first and third author has been made possible by the NWO Vidi grant 016.138.314, by the NWO Aspasia grant 015.008.054, and by a Delft Technology Fellowship awarded in 2013.}}
\author[2]{Sumit Sourabh}
\author[1]{Zhiguang Zhao}
\affil[1]{\small Faculty of Technology, Policy and Management, Delft University of Technology, the Netherlands}
\affil[2]{\small Institute for Informatics, University of Amsterdam, the Netherlands}
\affil[3]{\small Department of Pure and Applied Mathematics, University of Johannesburg, South Africa}
\date{}
\begin{document}

\maketitle

\begin{abstract}
We extend unified correspondence theory to Kripke frames with impossible worlds and their associated \emph{regular} modal logics. These are logics the modal connectives of which are not required to be normal: only the weaker properties of additivity $\Diamond x\vee \Diamond y = \Diamond (x\vee y)$ and multiplicativity $\Box x\wedge \Box y = \Box (x\wedge y)$ are required. Conceptually, it has been argued that their lacking necessitation makes regular modal logics better suited than normal modal logics at the formalization of epistemic and deontic settings. From a technical viewpoint, regularity proves to be very natural and adequate for the treatment of algebraic canonicity J\'onsson-style. Indeed, additivity and multiplicativity turn out to be key to extend J\'onsson's original proof of canonicity to the full Sahlqvist class of certain regular distributive modal logics 
naturally generalizing Distributive Modal Logic \cite{GeNaVe05, ConPal12}. Most interestingly, additivity and multiplicativity are key to J\'onsson-style canonicity also in the original (i.e.\ {\em normal}) DML. Our contributions include: the definition of Sahlqvist inequalities for regular modal logics on a distributive lattice propositional base; the proof of their canonicity following J\'onsson's strategy; the adaptation of the algorithm \textsf{ALBA} to the setting of regular modal logics on two non-classical (distributive lattice and intuitionistic) bases; the proof that the adapted \textsf{ALBA} is guaranteed to succeed on a syntactically defined class which properly includes the Sahlqvist one; finally, the application of the previous results so as to obtain proofs, alternative to Kripke's, of the strong completeness of Lemmon's epistemic logics E2-E5 with respect to elementary classes of Kripke frames with impossible worlds.\\

\noindent {\em Keywords:} regular modal logics, epistemic logic, algebraic Sahlqvist canonicity, algorithmic correspondence, regular inductive formulas and inequalities, Lemmon's logics E2-E5.\\
\noindent {\em Math. Subject Class.} 03B45, 06D50, 06D10, 03G10, 06E15.

\end{abstract}

\section{Introduction}

\paragraph{Impossible worlds.}

The formalization of situations in which logical impossibilities are thinkable and sometimes even believable has been a key topic in modal logic since its onset, and has attracted the interest of various communities of logicians over the years. This specific imperfection of cognitive agency can be directly translated in the language of modal logic by stipulating that, for a given agent $a$, the formula $\Diamond_a\bot$ is not a contradiction, and hence, that the necessitation rule is not admissible. Impossible worlds have been introduced by Kripke in \cite{Kr65} in the context of his relational semantic account of modal logics, as an elegant way to invalidate the necessitation rule while retaining all other axioms and rules of normal modal logic, and hence to provide complete semantics for important non-normal modal logics such as Lemmon's systems E2-E4.

More recently, impossible worlds have been used in close connection with counterfactual reasoning, paraconsistency (e.g.\ to model inconsistent databases, cf.\ \cite{Barwise1997}). The reader is referred to \cite{Nolan2013} for a comprehensive survey on impossible worlds.

The logics E2-E4 mentioned above are prominent examples of {\em regular modal logics}, which are classical modal logics (cf.\ \cite{CH90}) in which the necessitation rule is not valid (equivalently, modal logics that do not contain $\Box\top$ as an axiom) but such that $\Box$ distributes over conjunction. Arguably, their lacking necessitation makes regular modal logics better suited than normal modal logics at the formalization of epistemic and deontic settings. To briefly expand on the type of objections against normality raised in these settings, we mention Lemmon's argument in \cite{Le57}, the same paper in which the systems E2-E5 have been introduced together with other logics. The rule of necessitation is not in the systems since its presence  would generate theorems of the form $\Box \phi $. In the context of the interpretation of the $\Box$-operator as moral obligation or scientific but not logical necessity, Lemmon's systems are in line with the view that \textit{nothing should be a scientific law or a moral obligation as a matter of logic}.

Notwithstanding the fact that the two variants of Kripke relational models (namely with and without impossible worlds) appeared almost at the same time, the state of development of their mathematical theory is not the same. In particular, although unsystematic correspondence results exist (viz.\ the ones in \cite{Kr65}), no Sahlqvist-type results are available for Kripke frames with impossible worlds. The present paper aims to extend state-of-the-art Sahlqvist theory to Kripke frames with impossible worlds.

\paragraph{State-of-the-art Sahlqvist theory.} Sahlqvist theory has a long history in normal modal logic, going back to \cite{Sa75} and \cite{van1983modal}. The Sahlqvist theorem in \cite{Sa75} gives a syntactic definition of a class of modal formulas, the {\em Sahlqvist class}, each member of which defines an elementary (i.e.\ first-order definable) class of frames and is canonical.

Over the years, many extensions, variations and analogues of this result have appeared. For instance, algebraic/topological proofs
\cite{SaVa89,Jonsson94}, constructive canonicity  \cite{GhMe97}, variations of the correspondence language  \cite{Benthem06}, Sahlqvist-type results for $\mu$-calculus  \cite{BenthemBH12}, significant syntactic enlargements of the Sahlqvist class  \cite{GorankoV06} and improved algorithmic methods in \cite{CoGoVa06}. In the last mentioned paper, the SQEMA algorithm is defined, which is guaranteed to succeed in computing the first-order correspondent of {\em inductive formulas}, these forming the widest syntactically defined class of modal formulas each member of which is guaranteed to enjoy the same properties as Sahlqvist formulas.

In \cite{ConPal12}, building on the insights discussed in \cite{ConPalSou12}, algorithmic canonicity and correspondence results analogous to those in \cite{CoGoVa06} have been obtained for the language of distributive modal logic, a modal logic framework the propositional base of which is the logic of distributive lattices. A critical methodological feature of this approach is the application of the dualities and adjunctions between the relational and the algebraic semantics of these logics. These have been used to distil the order-theoretic and algebraic significance of the SQEMA reduction steps, and hence to recast them into an algebraic setting which is more general than the Boolean. 

Taking stock of these results gave rise to the so-called {\em unified correspondence theory} \cite{CoGhPa14}, a framework within which correspondence results can be formulated and proved abstracting away from specific logical signatures, and only in terms of the order-theoretic properties of the algebraic interpretations of logical connectives. This has made it possible to uniformly export the state-of-the-art in Sahlqvist theory from normal modal logic to a wide range of logics. These logics include  substructural logics and any other logic algebraically captured by normal lattice expansions   \cite{ConPal13},   hybrid logics \cite{ConRob}, and bi-intuitionistic modal mu-calculus \cite{CoCr14,CoFoPaSo}.

The breadth of this work has also stimulated many and varied applications. Some of them are closely related to the core concerns of the theory itself, such as
the understanding of the relationship between different
methodologies for obtaining canonicity results \cite{PaSoZh14,CPZ:constructive}, or the phenomenon of pseudocorrespondence \cite{CGPSZ14}. Other, possibly surprising applications  include the dual characterizations of classes of finite lattices \cite{FrPaSa14}, the identification of the syntactic shape of axioms which can be translated into analytic structural rules of a proper display calculus \cite{GMPTZ}, and the definition of cut-free Gentzen calculi for subintuitionistic logics \cite{MaZhao15}. Finally, the insights of unified correspondence theory have made it possible to determine the extent to which the Sahlqvist theory of classes of normal DLEs can be reduced to the Sahlqvist theory of normal Boolean expansions, by means of G\"{o}del-type translations \cite{CPZ:Trans}.


\paragraph{Contributions and general organization.}
In the present paper, we apply the unified correspondence approach to obtain Sahlqvist-type canonicity and correspondence results about regular modal logic on classical, intuitionistic and distributive-lattice propositional bases. We mainly focus on two topics in Sahlqvist theory which have been developed independently of one another until very recently (cf.\ \cite{PaSoZh14}), namely {\em J\'onsson-style canonicity} (treated in Part I of the present paper) and {\em algorithmic correspondence and canonicity} (treated in Part II).

J\'onsson-style canonicity builds on the theory of canonical extensions, originating in \cite{JoTa51}. This method has been pioneered by J\'onsson in \cite{Jonsson94}, where the canonicity of Sahlqvist formulas of classical normal modal logic was proven in a purely algebraic way. Interestingly, this method does not rely on Sahlqvist correspondence, as other methodologies do (cf.\ e.g.\ \cite{SaVa89}). An expanded discussion on the relation between J\'onsson-style canonicity and other methodologies can be found in \cite{PaSoZh14}. J\'onsson's method for canonicity was also adopted in \cite{GeNaVe05}, in the setting of distributive modal logic (DML).

In Part I, we closely analyze the core of J\'onsson strategy as laid out in \cite{GeNaVe05}. As one consequence of this analysis, we find and emend some mistakes in the proof of canonicity given in \cite{GeNaVe05}. Specifically, we highlight that some steps in that proof rely on certain order-theoretic assumptions about the interpretations of the logical connectives which turn out to not be satisfied by all connectives involved; however, we show that J\'onsson's strategy goes through all the same under weaker assumptions, which are satisfied in the setting of \cite{GeNaVe05}. Namely, being an operator or a dual operator (that is, preserving or reversing joins or meets in each coordinate, including $\top$ and $\bot$) can be weakened to being {\em additive} or {\em multiplicative} (that is, preserving or reversing \emph{non-empty} joins or meets in each coordinate, while $\top$ and $\bot$ do not need to be preserved or reversed). Another consequence of this analysis is that the role played by smoothness in \cite{GeNaVe05} can be played by {\em stability} (see Section \ref{sec:5} for more discussion). The extension of J\'onsson-style canonicity from normal to regular modal logic is the third consequence of this analysis, given that additivity and multiplicativity are exactly the algebraic conditions guaranteeing the soundness and completeness of the logical axioms characterizing regular modal logics within classical modal logics. Specifically, using the toolbox of unified correspondence theory, we define Sahlqvist inequalities in the setting of regular modal logics (a class of inequalities covering Sahlqvist DML-inequalities defined in \cite{GeNaVe05}) and prove that they are canonical using J\'onsson's strategy.

As final conclusions emerging from our analysis, additivity and multiplicativity play a key role in the J\'onsson-style proof of canonicity of Sahlqvist inequalities both in the {\em normal} and in the {\em regular} setting. Interestingly, the normal case is not independent from the regular case.
So in a sense, the regular setting provides a kind of conceptual completion for J\'onsson-style canonicity.

Existing accounts of canonicity for additive connectives can also be found in \cite{Seki03}, in the context of expansions of relevant logics and Routley-Meyer type semantics, and in \cite{Suzuki11}. However, additivity versus normality is not their focal point, as it is in the present paper, and the correspondence part of the theory, when developed, refers to different semantics.

\medskip

In Part II, we introduce an adaptation, referred to as \textsf{ALBA}$^r$, of the calculus \textsf{ALBA} to regular modal logic (on weaker than classical bases). This adaptation builds on results in \cite{CGPSZ14}, namely, it is obtained by considering a certain restricted shape of some of the rules of the metacalculus \textsf{ALBA}$^e$ (cf.\ \cite[Section 5]{CGPSZ14}). We define the class of inductive inequalities in the regular setting. Again, this definition follows the principles of unified correspondence, and is given in terms of the order-theoretic properties of the algebraic interpretations of the logical connectives. Similar to the inductive inequalities defined in other settings, inductive DLR-inequalities properly and significantly extend Sahlqvist inequalities, while sharing their most important properties, namely the fact that the (regular) modal logics generated by them are strongly complete with respect to the class of Kripke frames defined by their first-order correspondent. We show that \textsf{ALBA}$^r$ succeeds on every inductive DLR-inequality. Part I and Part II can be read independently of each other.

We mentioned early on that \textsf{ALBA}$^r$ is very similar to \textsf{ALBA}$^e$. However, it is worth mentioning that they are different in important respects.
Firstly, the two settings of these algorithms (that is, the present setting and that of \cite{CGPSZ14}) are different: indeed,
the basic setting of \cite{CGPSZ14} is a {\em normal} modal logic setting (i.e.\ the primitive modal connectives are normal), but the term functions $\pi, \sigma, \lambda, \rho$ are assumed to be arbitrary compound formulas. Then, in \cite{CGPSZ14}, this basic setting is restricted even further to the class of DLEs on which the interpretations of $\pi, \sigma, \lambda, \rho$ verify additional (additivity or multiplicativity) conditions.

In contrast to this, in the present setting, the primitive connectives are not normal in the first place, but are assumed to be additive or multiplicative. Hence in particular the present basic setting covers a strictly wider class of algebras than the normal DLEs of \cite{CGPSZ14}.

Secondly, as mentioned above, \textsf{ALBA}$^r$ guarantees all the benefits of classical Sahlqvist correspondence theory for the inequalities on which it succeeds. This is not the case for \textsf{ALBA}$^e$ (cf.\ \cite[Section 9]{CGPSZ14}), which is used to prove relativized canonicity in the absence of correspondence. The reason for this difference is due to the fact that the approximation and the adjunction rules of \textsf{ALBA}$^e$ relative to compound term functions $\pi, \sigma, \lambda, \rho$ are only sound on perfect DLEs which are canonical extensions of some DLEs, whereas the corresponding rules for \textsf{ALBA}$^r$ concern only primitive regular connectives, and for this reason they can be shown to be sound on arbitrary perfect DLRs (cf.\ Theorem \ref{Crctns:Theorem}).

\medskip

In Part III, the previous parts come together, and are applied to our running examples. Namely, the strong completeness of Lemmon's logics E2-E5 with respect to elementary classes of Kripke frames with impossible worlds is obtained as a consequence of the theory developed in Parts I and II, and the defining first-order conditions are effectively computed via \textsf{ALBA}$^r$. This result is also obtained as an application of the theory.

\section{Impossible worlds and non-normal modal logics}\label{sec:prelimi}
In the present section, we report on historically important examples of non-normal modal logics, namely Lemmon's E2-E4, which have been given semantic interpretation in terms of Kripke frames with impossible worlds. We also collect preliminaries on Kripke frames with impossible worlds and their complex algebras. We briefly mention how the discrete duality on objects between usual Kripke frames and Boolean algebras with operators can be extended to Kripke frames with impossible worlds and perfect regular Boolean algebra expansions (r-BAEs, cf.\ Definition \ref{def:regular BAE}). Finally, we outline the generalization of this discrete duality to the distributive lattice based counterparts of r-BAEs and the poset-based counterparts of Kripke frames with impossible worlds.

\subsection{Regular modal logics}\label{subset:reg:modal:logic}
\paragraph{} \emph{Classical modal logics} (cf.\ \cite{CH90}, Definition 8.1, and \cite{SG71}) are weaker than normal modal logics, and are only required to contain the axiom $\Diamond A\leftrightarrow \neg \Box\neg A$ and be closed under the following rule:
\vspace{2mm}

\AxiomC{$ A\leftrightarrow B$}
(RE)\UnaryInfC{$\Box A\leftrightarrow \Box B $}
\DisplayProof
\vspace{2mm}

\emph{Monotonic modal logics} are required to contain the axiom above and be closed under the following rule:
\vspace{2mm}

\AxiomC{$ A\rightarrow B$}
(RM)\UnaryInfC{$\Box A\rightarrow \Box B $}
\DisplayProof

\vspace{2mm}

\emph{Regular modal logics} (cf.\ \cite{CH90}, Definition 8.8)
 are required to contain the axiom above and be closed under the following rule:
\vspace{2mm}

\AxiomC{$ (A\wedge B)\rightarrow C$}
(RR)\UnaryInfC{$ (\Box A\wedge \Box B)\rightarrow \Box C $}
\DisplayProof

\vspace{2mm}
It is important to note that (RE) can be derived from either (RR) or (RM), and hence both monotonic modal logics and regular modal logics are classical (cf.\ Corollary \ref{cor:regular}). Notice also that (RR) can be derived from (RM) in the presence of the axiom $(\Box p\wedge \Box q) \rightarrow \Box (p\wedge q)$:

\begin{lemma}
\label{fact:RR:RM}
In the presence of \emph{(RM)} and $(\Box p\wedge \Box q) \rightarrow \Box (p\wedge q)$, we have \emph{(RR)}.
\end{lemma}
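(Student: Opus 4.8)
The plan is to chain the hypothesis through (RM) and the given axiom, using only the transitivity of implication available in the underlying classical propositional base. First I would assume that $(A\wedge B)\rightarrow C$ is a theorem. Applying the rule (RM) to this instance of its premise yields $\Box(A\wedge B)\rightarrow \Box C$; note that here the box is placed around the whole conjunction $A\wedge B$, not around $A$ and $B$ separately.

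Next I would instantiate the axiom $(\Box p\wedge \Box q)\rightarrow \Box(p\wedge q)$ by substituting $p:=A$ and $q:=B$, obtaining $(\Box A\wedge \Box B)\rightarrow \Box(A\wedge B)$. Composing this with the implication from the previous step by transitivity of $\rightarrow$ (a theorem of the propositional base, hence available via modus ponens) gives $(\Box A\wedge \Box B)\rightarrow \Box C$, which is exactly the conclusion of (RR).

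I do not expect any genuine obstacle: the only ingredients are substitution of formulas into the axiom schema, one application of (RM), and one hypothetical-syllogism step in classical propositional logic. The single point worth making explicit is the one already noted, namely that the axiom is precisely what bridges $\Box A\wedge\Box B$ to $\Box(A\wedge B)$, so that (RM) can be applied to the conjunction as a whole.
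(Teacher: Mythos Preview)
Your proposal is correct and follows essentially the same approach as the paper: assume the premise, apply (RM) to obtain $\Box(A\wedge B)\rightarrow\Box C$, instantiate the axiom to get $(\Box A\wedge\Box B)\rightarrow\Box(A\wedge B)$, and chain the two implications together.
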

\begin{proof}
Assume $\vdash(A\wedge B)\rightarrow C$. By (RM), it follows that $\vdash\Box(A\wedge B)\rightarrow\Box C$. By axiom $(\Box p\wedge \Box q) \rightarrow \Box (p\wedge q)$, we have $(\Box A\wedge \Box B) \rightarrow \Box (A\wedge B)$. Therefore we get $\vdash(\Box A\wedge \Box B)\rightarrow \Box C$.
\end{proof}

Hence, regular modal logics can be equivalently defined as monotonic modal logics which contain the axiom $(\Box p\wedge \Box q) \rightarrow \Box (p\wedge q)$.

In what follows, we will consider as running examples some historically important modal logics which have been given a semantic interpretation in terms of Kripke frames with impossible worlds. These are Lemmon's logics E2-E5.

\paragraph{Lemmon's epistemic logics.} In \cite{Le57}, the systems E1-E5 have been introduced as the epistemic counterparts of Lewis' modal systems S1-S5. The E-systems E2-E4 are examples of regular but not normal modal logics, while E5 turns out to coincide with the normal modal logic S5 (as mentioned in \cite[p.\ 209]{Kr65}). A semantic proof of this fact is given in Section \ref{sec:example:lemmon:corre:canoni}. With the E-systems, Lemmon intended to capture the principle that ``nothing is a scientific law as a matter of logic''. In particular, Lemmon argues that the necessitation rule is not plausible if the modal operator $\Box$ is to be interpreted as ``scientific but not logical necessity''.
The following axioms and rules are reported here with the same names as in \cite{Le57}. The notation has been changed to the currently standard one.\\

Axioms:

\begin{tabular}{llll}
\vspace{2mm}
(P) & Propositional tautologies &&\\
\vspace{2mm}
(1) & $\Box(p\to q)\to\Box(\Box p\to\Box q)$ & (1') & $\Box(p\to q)\to(\Box p\to\Box q)$\\
\vspace{2mm}
(2) & $\Box p\to p$ & & \\
\vspace{2mm}
(4) & $\Box p\to\Box\Box p$ & (5) & $\neg\Box p\to\Box\neg\Box p$.\\
\end{tabular}

Rules:

\begin{enumerate}
\item[(PCa)] If $\alpha$ is a propositional tautology, then $\vdash\alpha$.
\item[(PCb)] Substitution for proposition variables.
\item[(PCc)] Modus Ponens.
\item[(Eb)] If $\vdash\alpha\to\beta$, then $\vdash\Box\alpha\to\Box\beta$.
\end{enumerate}
\label{pageref:lemmonsystem}The logics E2-E5 are defined as follows:\\

\begin{tabular}{l c l}

\vspace{2mm}

PC:\; (P)+(PCa)+(PCb)+(PCc) &&\\

\vspace{2mm}

E2:\; PC+(Eb)+(1')+(2) &$\quad$ & E3:\; PC+(Eb)+(1)+(2) \\

\vspace{2mm}

E4:\; E2+(4) && E5:\; E2+(5) \\
\end{tabular}

\begin{lemma}
\label{fact:regular}
 In the presence of \emph{(Eb)} and \emph{(1')}, we have $\vdash\Box(\alpha\land\beta)\leftrightarrow\Box\alpha\land\Box\beta$.
 \end{lemma}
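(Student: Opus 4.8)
The plan is to prove the two implications separately and then combine them by propositional reasoning, since the biconditional $\Box(\alpha\land\beta)\liff\Box\alpha\land\Box\beta$ follows from $\Box(\alpha\land\beta)\to\Box\alpha\land\Box\beta$ together with $\Box\alpha\land\Box\beta\to\Box(\alpha\land\beta)$ by (PCa) and (PCc). Only (Eb), (1'), and propositional logic will be used; in particular (RM) in the guise of (Eb) handles the ``easy'' direction and a suitable substitution instance of (1') handles the ``hard'' one.

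For the left-to-right direction, I would start from the propositional tautologies $(\alpha\land\beta)\to\alpha$ and $(\alpha\land\beta)\to\beta$, apply (Eb) to each to obtain $\vdash\Box(\alpha\land\beta)\to\Box\alpha$ and $\vdash\Box(\alpha\land\beta)\to\Box\beta$, and then conjoin these propositionally to get $\vdash\Box(\alpha\land\beta)\to(\Box\alpha\land\Box\beta)$. This is exactly the argument that (Eb) gives monotonicity of $\Box$.

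For the right-to-left direction, the key step is to feed the tautology $\alpha\to(\beta\to(\alpha\land\beta))$ into (Eb), yielding $\vdash\Box\alpha\to\Box(\beta\to(\alpha\land\beta))$, and then to use the instance of (1') obtained by substituting $\beta$ for $p$ and $\alpha\land\beta$ for $q$ (legitimate by (PCb)), namely $\vdash\Box(\beta\to(\alpha\land\beta))\to(\Box\beta\to\Box(\alpha\land\beta))$. Transitivity of implication (via (PCa), (PCc)) then gives $\vdash\Box\alpha\to(\Box\beta\to\Box(\alpha\land\beta))$, which is propositionally equivalent to $\vdash(\Box\alpha\land\Box\beta)\to\Box(\alpha\land\beta)$.

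I do not anticipate a genuine obstacle here; the only point requiring slight care is picking the right substitution instance of (1') — everything else is bookkeeping in propositional logic. Combining the two established implications by propositional reasoning yields $\vdash\Box(\alpha\land\beta)\liff\Box\alpha\land\Box\beta$, as desired. (Note that this lemma is precisely what shows E2--E5 validate the characteristic axiom $(\Box p\land\Box q)\to\Box(p\land q)$ of regular modal logics, so that by Lemma \ref{fact:RR:RM} these systems are regular.)
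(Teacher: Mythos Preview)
Your proof is correct and follows essentially the same approach as the paper: both directions use (Eb) on the appropriate propositional tautologies, and the right-to-left direction hinges on the same instance of (1') with $\beta$ for $p$ and $\alpha\land\beta$ for $q$. The only cosmetic difference is that the paper phrases the hard direction as a chain of entailments $\Box\alpha\vdash\Box(\beta\to(\alpha\land\beta))\vdash\Box\beta\to\Box(\alpha\land\beta)$, whereas you keep everything in implication form throughout.
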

\begin{proof}
 The left-to-right implication is obtained by applying (Eb) to propositional tautologies $\alpha\land\beta\to\alpha$ and $\alpha\land\beta\to\beta$, which yields $\vdash\Box(\alpha\land\beta)\to\Box\alpha$ and $\vdash\Box(\alpha\land\beta)\to\Box\beta$. From these, PC derives $\vdash\Box(\alpha\land\beta)\to\Box\alpha\land\Box\beta$.

As to the right-to-left implication, it is enough to show that $\Box \alpha \vdash \Box\beta\to\Box(\alpha\land\beta)$. Indeed:
\[ \Box \alpha \vdash^{(1)} \Box(\beta\to(\alpha\land\beta))\vdash^{(2)} \Box\beta\to\Box(\alpha\land\beta). \]
Entailment $\vdash^{(1)}$ follows from $\vdash\Box\alpha\to\Box(\beta\to(\alpha\land\beta))$, which can be derived by applying (Eb) to the propositional tautology $\vdash \alpha\to (\beta\to (\alpha\wedge \beta))$.

Entailment $\vdash^{(2)}$ follows from $\vdash\Box(\beta\to(\alpha\land\beta))\to(\Box\beta\to\Box(\alpha\land\beta))$, which is obtained as a suitable instantiation of (1').
\end{proof}
\begin{cor}\label{cor:regular}
Lemmon's logics E2-E5 are regular.
\end{cor}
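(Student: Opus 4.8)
The plan is to derive Corollary~\ref{cor:regular} from Lemmas~\ref{fact:RR:RM} and~\ref{fact:regular}, via the characterization of regular modal logics as the monotonic modal logics containing the axiom $(\Box p\wedge\Box q)\to\Box(p\wedge q)$. The argument then splits into three routine checks, which I would carry out uniformly for E2--E5.

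First I would observe that each of E2--E5 is monotonic, hence classical: the rule (Eb) is literally (RM), and each E-system is closed under it; since $\Diamond$ is the defined connective $\neg\Box\neg$ in these systems, the axiom $\Diamond A\liff\neg\Box\neg A$ holds trivially, and closure under (RE) follows from closure under (RM), as already noted in the text. Thus each E-system is a monotonic (in particular classical) modal logic.

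Next I would check that each of E2--E5 proves $(\Box p\wedge\Box q)\to\Box(p\wedge q)$. By Lemma~\ref{fact:regular} it is enough that the system contain (Eb) and (1'); this is immediate for E2, and E4 $=$ E2 $+$ (4) and E5 $=$ E2 $+$ (5) inherit both. The only case requiring an extra step is E3 $=$ PC $+$ (Eb) $+$ (1) $+$ (2), which is presented with (1) rather than (1'): instantiating the schema (2) at the formula $\Box p\to\Box q$ gives $\vdash\Box(\Box p\to\Box q)\to(\Box p\to\Box q)$, and composing this with (1) by propositional reasoning yields $\vdash\Box(p\to q)\to(\Box p\to\Box q)$, which is exactly (1'); hence Lemma~\ref{fact:regular} applies to E3 as well.

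Finally I would conclude: by the previous two steps, each of E2--E5 is a monotonic modal logic containing $(\Box p\wedge\Box q)\to\Box(p\wedge q)$, so by Lemma~\ref{fact:RR:RM} it is also closed under (RR), and a classical modal logic closed under (RR) is regular by definition. I do not expect a genuine obstacle here, since the content is just bookkeeping about the axioms and rules defining each system; the only point that is not purely mechanical is noticing that E3 is presented with (1) in place of (1'), so that Lemma~\ref{fact:regular} cannot be invoked verbatim and one must first derive (1') in E3 from (1) and (2).
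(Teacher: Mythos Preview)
Your proposal is correct and follows essentially the same route as the paper: both derive (1') in E3 from (1) and (2) by instantiating (2) at $\Box p\to\Box q$ and composing with (1), then appeal to Lemma~\ref{fact:regular} (together with (Eb)) to obtain the multiplicativity axiom in each system. You are merely more explicit than the paper in spelling out the final step via Lemma~\ref{fact:RR:RM} and the monotonic characterization of regularity, whereas the paper compresses this into ``the regularity of E2--E5 immediately follows from Lemma~\ref{fact:regular}.''
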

\begin{proof}
Notice preliminarily that $(1)$ and $(2)$ entail $(1')$. Indeed,
 \[\Box(\alpha\to \beta)\vdash^{(1)}\Box(\Box\alpha\to \Box\beta)\vdash^{(2)} \Box\alpha\to \Box\beta.\]
 Entailment $\vdash^{(1)}$ immediately follows from assumption (1) above. 
 Entailment $\vdash^{(2)}$ immediately follows from $\vdash\Box(\Box\alpha\to \Box\beta)\to (\Box\alpha\to \Box\beta)$, which is a suitable instantiation of (2). Hence, the regularity of E2-E5 immediately follows from Lemma \ref{fact:regular}.
\end{proof}

\subsection{Kripke frames with impossible worlds and their complex algebras}
\begin{definition}[cf.\ \cite{Kr65, Be13}]
A {\em Kripke frame with impossible worlds} is a triple $\mathbb{F} = (W, S, N)$ such that $W\neq \varnothing$, $N\subseteq W$ and $S\subseteq N\times W$. The set $N$ is regarded as the collection of so-called {\em normal worlds}, i.e.\ $W\setminus N$ is the collection of impossible worlds. A {\em Kripke model with impossible worlds} is a pair $\mathbb{M}=(\mathbb{F}, V)$ such that $\mathbb{F}$ is a Kripke frame with impossible worlds, and $V:\mathsf{Prop}\to\mathcal{P}(W)$ is an assignment.
\end{definition}
The satisfaction of atomic formulas and of formulas the main connective of which is a Boolean connective is defined as usual. As to $\Box$- and $\Diamond$-formulas,
\[\mathbb{M}, w\Vdash \Box\phi\quad \mbox{ iff }\quad w\in N\ \mbox{ and for all }\ v\in W \mbox{ if } wS v \mbox{ then }\mathbb{M}, v\Vdash\phi.\]
\[\mathbb{M}, w\Vdash \Diamond\phi\quad\mbox{ iff }\quad w\notin N\mbox{ or there exists some }v\in W \mbox{ s.t. }wS v, \mbox{ and } \mathbb{M}, v\Vdash\phi.\]
 Validity of formulas is defined as usual.

The definition above is tailored to make the necessitation rule fail. Indeed, $\top$ is valid at every point in every model, however $\mathcal{M}, w\not\Vdash \Box\top$ whenever $w\notin N$.
\begin{lemma}
\label{fact:multiplicativity}
Axiom $(\Box p\land \Box q)\leftrightarrow\Box(p\land q)$ is valid on any Kripke frame with impossible worlds.
\end{lemma}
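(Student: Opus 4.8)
The plan is to prove this by directly unfolding the satisfaction clause for $\Box$ given just above the statement, so that validity reduces to an elementary logical equivalence. Fix an arbitrary Kripke frame with impossible worlds $\mathbb{F}=(W,S,N)$, an arbitrary assignment $V$, the induced model $\mathbb{M}=(\mathbb{F},V)$, and an arbitrary world $w\in W$. It suffices to show that $\mathbb{M},w\Vdash \Box p\land\Box q$ if and only if $\mathbb{M},w\Vdash\Box(p\land q)$; since $w$ and $V$ are arbitrary this yields validity on $\mathbb{F}$, and since $\mathbb{F}$ is arbitrary it yields validity on every such frame.

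First I would expand the left-hand side using the Boolean clause for $\land$ and then the clause for $\Box$: $\mathbb{M},w\Vdash\Box p\land\Box q$ holds iff $\mathbb{M},w\Vdash\Box p$ and $\mathbb{M},w\Vdash\Box q$, that is, iff ($w\in N$ and for all $v\in W$, $wSv$ implies $\mathbb{M},v\Vdash p$) and ($w\in N$ and for all $v\in W$, $wSv$ implies $\mathbb{M},v\Vdash q$). Then, collapsing the two copies of the guard $w\in N$ by idempotence of $\land$, and using that a conjunction of two universally quantified implications with the common antecedent $wSv$ is equivalent to a single universally quantified implication whose consequent is the conjunction of the consequents, this is equivalent to: $w\in N$ and for all $v\in W$, $wSv$ implies ($\mathbb{M},v\Vdash p$ and $\mathbb{M},v\Vdash q$), i.e.\ $wSv$ implies $\mathbb{M},v\Vdash p\land q$. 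By the clause for $\Box$ once more, this last statement is exactly $\mathbb{M},w\Vdash\Box(p\land q)$, completing the chain of equivalences.

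There is no real obstacle here; the only point worth flagging explicitly is that the normality guard $w\in N$ occurs \emph{uniformly inside} the semantic clause for $\Box$, so it introduces no asymmetry between the two directions. In particular, the right-to-left direction needs no extra hypothesis that $w$ be normal, since $\mathbb{M},w\Vdash\Box(p\land q)$ already entails $w\in N$. This is precisely why multiplicativity survives the move to impossible worlds, whereas $\Box\top$ does not: the guard is designed to fail the latter at impossible worlds without disturbing the former.
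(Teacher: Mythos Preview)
Your proof is correct and takes essentially the same approach as the paper's: a direct semantic verification unfolding the clause for $\Box$. The only cosmetic difference is that the paper first case-splits on whether $w\in N$ (both sides being false when $w\notin N$), whereas you handle this uniformly by carrying the guard $w\in N$ through the chain of equivalences and collapsing its two copies by idempotence.
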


\begin{proof}

Let $\mathbb{F}=(W, S, N)$ be a Kripke frame with impossible worlds. Fix a valuation $V$ and let $w\in W$. If $w\notin N$, then $w\nVdash\Box p$, $w\nVdash\Box q$, $w\nVdash\Box(p\land q)$. So $w\Vdash(\Box p\land \Box q)\leftrightarrow\Box(p\land q)$. If $w\in N$, the following chain of equivalences holds:

\vspace{2mm}
\begin{center}
\begin{tabular}{ll}
&$w\Vdash\Box p\land \Box q$ \\
iff &$w\Vdash\Box p$ and $w\Vdash\Box q$ \\
iff & for all $v\in W$ s.t.\ $wSv$, $v\Vdash p$ and for all $v\in W$ s.t.\ $wSv$, $v\Vdash q$ \\
iff & for all $v\in W$ s.t.\ $wSv$, $v\Vdash p$ and $v\Vdash q$\\
iff &$w\Vdash\Box (p\land q)$.\\
\end{tabular}
\end{center}
\end{proof}
\begin{definition}
\label{def: complex alg kfimpossible}
For any Kripke frame with impossible worlds $\mathbb{F} = (W, S, N)$, the {\em complex algebra} associated with $\mathbb{F}$ is $\mathbb{F}^+: = (\mathcal{P}(W), \Diamond_S)$ where $\Diamond_S: \mathcal{P}(W)\to \mathcal{P}(W)$ is defined by the following assignment: \[X\mapsto \{w\in W\mid w\notin N \mbox{ or } wSv \mbox{ for some } v\in X\} = N^c\cup S^{-1}[X]. \]
\end{definition}
\begin{lemma}
\label{fact:f_S compl aditive}
For any Kripke frame with impossible worlds $\mathbb{F} = (W, S, N)$ and any $\mathcal{X}\subseteq \mathcal{P}(W)$, if $\mathcal{X}\neq \varnothing$, then \[\Diamond_S(\bigcup\mathcal{X}) = \bigcup\{\Diamond_S(X)\mid X\in \mathcal{X}\}.\]
\end{lemma}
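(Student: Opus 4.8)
The plan is to prove the identity $\Diamond_S(\bigcup\mathcal{X}) = \bigcup\{\Diamond_S(X)\mid X\in\mathcal{X}\}$ by unwinding the definition of $\Diamond_S$ given in Definition \ref{def: complex alg kfimpossible}, namely $\Diamond_S(X) = N^c\cup S^{-1}[X]$, and then using the fact that the inverse-image operator $S^{-1}[\cdot]$ distributes over arbitrary (in particular nonempty) unions, together with a distributive-lattice manipulation of the $N^c$ term that crucially requires $\mathcal{X}\neq\varnothing$.

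First I would compute the left-hand side:
\[
\Diamond_S\Bigl(\bigcup\mathcal{X}\Bigr) = N^c\cup S^{-1}\Bigl[\bigcup\mathcal{X}\Bigr] = N^c\cup\bigcup_{X\in\mathcal{X}}S^{-1}[X],
\]
where the last equality is the standard fact that preimages commute with unions. Then I would compute the right-hand side:
\[
\bigcup_{X\in\mathcal{X}}\Diamond_S(X) = \bigcup_{X\in\mathcal{X}}\bigl(N^c\cup S^{-1}[X]\bigr) = \Bigl(\bigcup_{X\in\mathcal{X}}N^c\Bigr)\cup\bigcup_{X\in\mathcal{X}}S^{-1}[X].
\]
It remains to observe that $\bigcup_{X\in\mathcal{X}}N^c = N^c$ precisely because $\mathcal{X}$ is nonempty (a union of copies of a fixed set indexed by a nonempty set is that set; if $\mathcal{X}$ were empty this would be $\varnothing$ instead, which is exactly where the additivity-but-not-normality phenomenon shows up). Combining the two computations gives the desired equality.

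The only real subtlety — and the single place where the nonemptiness hypothesis is used — is the step $\bigcup_{X\in\mathcal{X}}N^c = N^c$; everything else is a routine Boolean/set-theoretic identity. So there is no substantial obstacle here: the lemma is essentially a reformulation of Lemma \ref{fact:f_S compl aditive}'s content at the level of the complex algebra, recording that $\Diamond_S$ is \emph{additive} (preserves nonempty joins) but not a full operator (it need not preserve the empty join, i.e.\ $\Diamond_S(\varnothing) = N^c$ which is $\varnothing$ only when every world is normal). I would state the proof in two or three lines along exactly these lines.
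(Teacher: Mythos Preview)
Your proof is correct and follows exactly the same approach as the paper's own (suppressed) proof: unfold $\Diamond_S(X) = N^c \cup S^{-1}[X]$, use that $S^{-1}[\cdot]$ commutes with arbitrary unions, and invoke $\mathcal{X}\neq\varnothing$ to justify $\bigcup_{X\in\mathcal{X}} N^c = N^c$. Your additional remark about why the empty case fails is a nice clarification but not needed for the statement as given.
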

The fact above shows that the diamond-type operation of the complex algebra associated with any Kripke frame with impossible worlds is completely additive.
Lemmas \ref{fact:multiplicativity} and \ref{fact:f_S compl aditive} witness that the class of Kripke frames with impossible worlds form a natural semantic environment for the so-called regular modal logics, the definition of which is introduced below.

\subsection{Algebraic semantics}
In the present subsection, we collect basic facts about algebraic semantics for (Boolean-based) regular modal logics.

\begin{definition}
\label{def:regular BAE}
A {\em regular Boolean algebra expansion} (from now on abbreviated as r-BAE) is a tuple $\mathbb{A} = (\mathbb{B}, f)$ such that $\mathbb{B}$ is a Boolean algebra, and $f$ is a unary and additive operation, i.e.\ $f$ preserves finite nonempty joins of $\mathbb{B}$. An r-BAE is {\em perfect} if $\mathbb{B}$ is complete and atomic, and $f$ is completely additive, i.e.\ $f$ preserves arbitrary nonempty joins of $\mathbb{B}$.
\end{definition}
The complex algebra associated with every Kripke frame with impossible worlds is a perfect r-BAE (cf.\ Lemma \ref{fact:f_S compl aditive}).

Formulas in the language of regular modal logic are interpreted into r-BAEs via assignments in the usual way. An \emph{assignment} is a function $h:\mathsf{Prop}\to\A$. Each assignment has a unique homomorphic extension to the algebra of formulas over $\mathsf{Prop}$, which we also denote by $h$.
An \emph{equation} is a pair of formulas $(s,t)$, usually written as $s \approx t$. An equality $s \approx t$ is valid in an r-BAE $\A$ (notation: $\A \models s \approx t$) if $h(s) =h(t)$ for all assignments $h$. An \emph{inequality} is a pair of formulas $(s,t)$, usually written as $s \leq t$.  An inequality $s \leq t$ is valid on an r-BAE $\A$ (notation: $\A \models s \leq t$) if $h(s) \leq^{\A} h(t)$ for all assignments $h$, where $\leq^{\A}$ is the lattice order of $\A$. If $\K$ is a class of r-BAEs, then $s \approx t$ is valid on $\K$ (notation: $\K \models s \approx t$) if $s \approx t$ is valid in every $\A\in \K$. The validity of $s\leq t$ in $\K$ is defined similarly.

It is well known (cf.\ \cite[Theorem 7.5]{HH03}) that the basic regular modal logic is sound and complete with respect to the class of r-BAEs.

The well known duality on objects between Kripke frames and complete atomic Boolean algebras with complete operators generalizes to the setting of perfect r-BAEs and Kripke frames with impossible worlds. Indeed, Definition \ref{def: complex alg kfimpossible} provides half of the connection. As to the other half:

\begin{definition}
For every perfect r-BAE $\mathbb{A} = (\mathbb{B}, f)$, the {\em atom structure with impossible worlds} associated with $\mathbb{A}$ is $\mathbb{A}_{+}: = (At(\mathbb{A}), S_f, N)$, where $At(\mathbb{A})$ is the collection of atoms of $\mathbb{A}$, $N: = \{x\in At(\mathbb{A})\mid x\nleq f(\bot) \}$ and for all $x, y\in At(\mathbb{A})$ such that $x\nleq f(\bot)$,
\[xS_f y \quad \mbox{ iff }\quad x\leq f(y). \]
\end{definition}
We could not find a precise reference to the following proposition in the literature, so it should perhaps be considered  folklore.
\begin{prop}
For every Kripke frame with impossible worlds $\mathbb{F}$ and every perfect r-BAE $\mathbb{A}$, \[
(\mathbb{F}^{+})_{+}\cong \mathbb{F}\quad \mbox{ and } \quad (\mathbb{A}_{+})^{+}\cong \mathbb{A}.\]
\end{prop}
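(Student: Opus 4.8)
The plan is to prove the two isomorphisms separately, each by unwinding the relevant definitions and checking that the obvious candidate maps are mutually inverse structure-preserving bijections. This is the standard "discrete duality on objects" argument, adapted to keep track of the set $N$ of normal worlds and the fact that the modal operation is only additive rather than normal.

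\medskip

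\textbf{The isomorphism $(\mathbb{F}^{+})_{+}\cong \mathbb{F}$.} Let $\mathbb{F}=(W,S,N)$. By Definition \ref{def: complex alg kfimpossible}, $\mathbb{F}^{+}=(\mathcal{P}(W),\Diamond_S)$ with $\Diamond_S(X)=N^c\cup S^{-1}[X]$; this is a perfect r-BAE, whose atoms are exactly the singletons $\{w\}$ for $w\in W$. Hence the map $w\mapsto\{w\}$ is the obvious bijection between $W$ and $At(\mathbb{F}^{+})$. I would then verify that this bijection carries the structure of $\mathbb{F}$ to the structure of $(\mathbb{F}^{+})_{+}$. For the unary predicate: the set $N$ of $(\mathbb{F}^{+})_{+}$ consists of those atoms $\{w\}$ with $\{w\}\nleq\Diamond_S(\varnothing)=N^c\cup S^{-1}[\varnothing]=N^c$, i.e.\ with $w\notin N^c$, i.e.\ with $w\in N$; so the normal worlds match up. For the relation: given $w\in N$, we have $\{w\}S_{\Diamond_S}\{v\}$ iff $\{w\}\leq\Diamond_S(\{v\})=N^c\cup S^{-1}[\{v\}]$, which (since $w\in N$, so $w\notin N^c$) holds iff $w\in S^{-1}[\{v\}]$, i.e.\ iff $wSv$. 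Thus $w\mapsto\{w\}$ is a frame isomorphism.

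\medskip

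\textbf{The isomorphism $(\mathbb{A}_{+})^{+}\cong \mathbb{A}$.} Let $\mathbb{A}=(\mathbb{B},f)$ be a perfect r-BAE; then $\mathbb{B}$ is complete and atomic, and the classical discrete duality (Stone/Tarski representation for CABAs) gives a Boolean isomorphism $\eta:\mathbb{B}\to\mathcal{P}(At(\mathbb{A}))$, $b\mapsto\{x\in At(\mathbb{A})\mid x\leq b\}$. It remains to check that $\eta$ intertwines $f$ with $\Diamond_{S_f}$ of the complex algebra of $\mathbb{A}_{+}=(At(\mathbb{A}),S_f,N)$, where $N=\{x\in At(\mathbb{A})\mid x\nleq f(\bot)\}$, i.e.\ that $\eta(f(b))=\Diamond_{S_f}(\eta(b))=N^c\cup S_f^{-1}[\eta(b)]$ for every $b\in\mathbb{B}$. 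Unwinding: $x\in\eta(f(b))$ iff $x\leq f(b)$; on the other side, $x\in N^c$ iff $x\leq f(\bot)$, and $x\in S_f^{-1}[\eta(b)]$ iff $x\in N$ and $xS_f y$ for some atom $y\leq b$, iff $x\nleq f(\bot)$ and $x\leq f(y)$ for some atom $y\leq b$. So the identity to prove is
\[ x\leq f(b)\quad\Longleftrightarrow\quad x\leq f(\bot)\ \text{ or }\ \bigl(x\nleq f(\bot)\ \text{and}\ x\leq f(y)\ \text{for some atom }y\leq b\bigr). \]
The right-to-left direction is immediate from monotonicity of $f$ (which follows from additivity) and $\bot\leq b$, $y\leq b$. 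For left-to-right: assume $x\leq f(b)$. Since $\mathbb{B}$ is atomic, $b=\bigvee\{y\in At(\mathbb{A})\mid y\leq b\}$. If this join is over a nonempty set, complete additivity gives $f(b)=\bigvee\{f(y)\mid y\leq b\}$, and since $x$ is an atom (hence completely join-prime in the perfect lattice $\mathbb{B}$) and $x\leq f(b)=\bigvee\{f(y)\mid y\leq b\}$, we get $x\leq f(y)$ for some atom $y\leq b$, and we are done whichever side of the disjunction $x$ falls on. If instead $\{y\in At(\mathbb{A})\mid y\leq b\}=\varnothing$, then $b=\bot$ and $x\leq f(b)=f(\bot)$, landing in the first disjunct. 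This establishes $\eta(f(b))=\Diamond_{S_f}(\eta(b))$, so $\eta$ is an r-BAE isomorphism.

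\medskip

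\textbf{Main obstacle.} The only genuinely delicate point is the last case distinction in the second isomorphism: because $f$ is merely \emph{additive} and not \emph{normal}, complete additivity $f(\bigvee\mathcal{Y})=\bigvee\{f(y)\mid y\in\mathcal{Y}\}$ is available only for nonempty $\mathcal{Y}$, so the representation of $f(b)$ as a join of $f(y)$'s breaks down exactly when $b=\bot$. This is precisely why the definition of $N$ (via $f(\bot)$) and the clause $N^c\cup\,\cdot\,$ in $\Diamond_S$ are there, and the proof has to treat the empty-join case by hand rather than folding it into the general argument, as one would in the normal setting. Everything else is the routine CABA discrete duality together with the fact (used throughout) that atoms in a perfect distributive lattice are completely join-prime.
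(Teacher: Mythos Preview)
Your proof is correct. The paper does not actually give a proof of this proposition: it states the result and remarks that it ``should perhaps be considered folklore,'' without further argument. Your argument is exactly the standard one that the paper is implicitly relying on, and you have correctly identified and handled the only nontrivial point, namely that complete additivity of $f$ applies only to nonempty joins, so the case $b=\bot$ must be treated separately and is absorbed by the $N^c$ clause in the definition of $\Diamond_{S_f}$. There is nothing to add or correct.
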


\subsection{The distributive setting}
In the present subsection, we briefly outline the BDL-versions of the definitions and facts of the previous subsection. The reason for this generalization is that most of the treatment in Parts I and II is presented in the setting of distributive lattices.

\begin{definition}
A {\em regular distributive lattice expansion}\footnote{Actually, this name refers much more generally to bounded distributive lattices with an arbitrary but finite number of additional operations of any finite arity. Hence, the definition given above captures only a very restricted subclass of distributive lattice expansions which is sufficient for the sake of the present section. Later on in the paper, we will introduce several other proper subclasses of distributive lattice expansions. However, for the sake of brevity, we will abuse terminology and also refer to them as ``distributive lattice expansions'' without any further specification (cf.\ Definition \ref{def:dlr}).} (abbreviated as r-DLE in the remainder of the section) is a tuple $\mathbb{A} = (\mathbb{B}, f, g)$\footnote{In fact, here we can also add unary maps which turn finite non-empty joins (resp.\ meets) into meets (resp.\ joins). Here we only talk about unary and additive (resp.\ multiplicative) operations without loss of generality.} such that $\mathbb{B}$ is a bounded distributive lattice, and $f$ (resp.\ $g$) is a unary and additive (resp.\ multiplicative) operation,  i.e.\ it preserves finite nonempty joins (resp.\ meets) of $\mathbb{B}$. An r-DLE is {\em perfect} if $\mathbb{B}$ is a perfect distributive lattice, i.e.\ it is complete, completely distributive and completely join-generated by the collection of its completely join-prime elements, and $f$ (resp.\ $g$) is completely additive (resp.\ completely multiplicative), i.e.\ it preserves arbitrary nonempty joins (resp.\ arbitrary nonempty meets) of $\mathbb{B}$. An $x\in \A$ is {\em completely join-irreducible} (resp.\ {\em completely join-prime}) if $x\neq \bot$ and for every $A\subseteq \A$, if $x = \bigvee S$ (resp.\ $x \leq \bigvee S$) then $x = s$ (resp.\ $x\leq s$) for some $s\in S$. In the setting of perfect distributive lattices, completely join-irreducibles coincide with completely join-primes.
\end{definition}

\begin{definition}
A {\em distributive Kripke frame with impossible worlds} is a quadruple $\mathbb{F} = (W, \leq, S_f, S_g, N_f, N_g)$ such that $W$ is a nonempty set, $\leq$ is a partial order on $W$, the subsets $N_f, N_g\subseteq W$ are a down-set and an up-set respectively (the collections of {\em $f$-normal worlds} and {\em $g$-normal worlds} respectively), $S_f\subseteq N_f\times W, S_g\subseteq N_g\times W$ are such that
$${\geq_f}\circ S_f\circ {\geq}\subseteq S_f\quad \mbox{ and }\quad {\leq_g}\circ S_g\circ {\leq}\subseteq S_g,$$
where $\geq_f$ and $\leq_g$ respectively denote the restrictions of $\geq$ and $\leq$ to $N_f$ and $N_g$.
\end{definition}

For every perfect r-DLE $\mathbb{A} = (\mathbb{B}, f, g)$, let $\mathbb{A}_{+} : = (\jty(\mathbb{A}), \geq, S_f, S_g, N_f, N_g)$ where $\jty(\mathbb{A})$ is the collection of completely join-irreducible elements of $\mathbb{A}$, the symbol $\geq$ denotes the converse of the lattice order restricted to $\jty(\mathbb{A})$, $N_f: = \{x\in \jty(\mathbb{A})\mid x\nleq f(\bot) \}$, $N_g: = \{x\in \jty(\mathbb{A})\mid x\leq g(\top) \}$, and $S_f\subseteq N_f\times \jty(\mathbb{A})$ and $S_g\subseteq N_g\times \jty(\mathbb{A})$ are binary relations defined as follows: 
\[xS_f y \quad \mbox{ iff }\quad x\leq f(y)\qquad \mbox{ and }\qquad\ xS_g y \quad \mbox{ iff }\quad \kappa(x)\geq g(\kappa(y)),\]

where $\kappa:\jty(\mathbb{A})\to \mty(\mathbb{A})$ is the order isomorphism defined by $\kappa(x)=\bigvee\{a\in \mathbb{A}\mid x\nleq a\}$.

 For every distributive Kripke frame with impossible worlds $\mathbb{F} = (W, \leq , S_f, S_g, N_f, N_g)$, let
$\mathbb{F}^+: = (\mathcal{P}^{\uparrow}(W), f, g)$ where $\mathcal{P}^{\uparrow}(W)$ is the set of the upward-closed subsets of $W$, and $f, g: \mathcal{P}^{\uparrow}(W)\to \mathcal{P}^{\uparrow}(W)$ are respectively defined by the following assignments:

\[f(X)=\{w\in W\mid w\notin N_f \mbox{ or } v\in X \mbox{ for some } v \mbox{ s.t.\ }wS_f v\} = N_f^c\cup S_f^{-1}[X]. \]
\[g(X)=\{w\in W\mid w\in N_g \mbox{ and } v\in X \mbox{ for all } v \mbox{ s.t.\ }wS_g v\} = N_g\cap (S_g^{-1}[X^c])^c. \]

We could not find a precise reference to the following proposition in the literature, so it should perhaps be considered  folklore.

\begin{prop}
For every distributive Kripke frame with impossible worlds $\mathbb{F}$ and every perfect r-DLE $\mathbb{A}$, \[
(\mathbb{F}^{+})_{+}\cong \mathbb{F}\quad \mbox{ and } \quad (\mathbb{A}_{+})^{+}\cong \mathbb{A}.\]
\end{prop}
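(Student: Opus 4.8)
This is a statement of discrete duality on objects, and the natural route is to bootstrap it from the classical Birkhoff-style duality between perfect bounded distributive lattices and posets, checking in addition that the two modal operations and the normality sets $N_f,N_g$ are transported faithfully. Recall that for a perfect distributive lattice $\mathbb{B}$ the map $\eta_{\mathbb{B}}\colon\mathbb{B}\to\mathcal{P}^{\uparrow}(\jty(\mathbb{B}),\geq)$, $a\mapsto\{x\in\jty(\mathbb{B})\mid x\leq a\}$, is a bounded-lattice isomorphism, using complete join-generation by $\jty(\mathbb{B})$ together with the fact that each $x\in\jty(\mathbb{B})$ is completely join-prime; and conversely, for a poset $(W,\leq)$ the completely join-prime elements of $\mathcal{P}^{\uparrow}(W)$ are exactly the principal up-sets ${\uparrow}w$, so that $\varepsilon_W\colon w\mapsto{\uparrow}w$ is an order isomorphism onto $(\jty(\mathcal{P}^{\uparrow}(W)),\geq)$. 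Thus it suffices to show that $\eta_{\A}$ and $\varepsilon_W$ (with $W=\jty(\A)$, resp.\ $W$ the carrier of $\mathbb{F}$) also respect $f$, $g$, $N_f$ and $N_g$; no separate verification of bijectivity is then needed.

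\textbf{The isomorphism $(\mathbb{A}_{+})^{+}\cong\mathbb{A}$.} First one checks that $\mathbb{A}_{+}$ really is a distributive Kripke frame with impossible worlds: since additivity forces $f$ to be monotone and multiplicativity forces $g$ to be monotone, $N_f=\{x\mid x\nleq f(\bot)\}$ is a down-set and $N_g=\{x\mid x\leq g(\top)\}$ an up-set for the frame order, and the inclusions ${\geq_f}\circ S_f\circ{\geq}\subseteq S_f$, ${\leq_g}\circ S_g\circ{\leq}\subseteq S_g$ follow from monotonicity of $f$, resp.\ of $g$ read through $\kappa$. Next, writing $f^{+},g^{+}$ for the operations of $(\mathbb{A}_{+})^{+}$, one verifies $\eta_{\A}(f(a))=f^{+}(\eta_{\A}(a))$ and $\eta_{\A}(g(a))=g^{+}(\eta_{\A}(a))$. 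For $f$: if $a=\bot$ both sides are $N_f^{c}$; if $a\neq\bot$ then complete additivity gives $f(a)=\bigvee\{f(y)\mid y\in\jty(\A),\,y\leq a\}$, a non-empty join, so by complete join-primeness $x\leq f(a)$ iff $x\leq f(\bot)$ or $x\leq f(y)$ (i.e.\ $xS_f y$) for some $\jty$-element $y\leq a$, which is precisely $x\in N_f^{c}\cup S_f^{-1}[\eta_{\A}(a)]=f^{+}(\eta_{\A}(a))$. The $g$-clause is order-dual: using $\kappa\colon\jty(\A)\to\mty(\A)$, the identity $x\leq b\iff b\nleq\kappa(x)$ for completely join-prime $x$, and complete multiplicativity $g(a)=\bigwedge\{g(\kappa(y))\mid y\in\jty(\A),\,y\nleq a\}$ when $a\neq\top$ (with $N_g$ absorbing the case $a=\top$), one gets $x\leq g(a)$ iff $x\in N_g$ and $y\leq a$ for every $y$ with $xS_g y$, i.e.\ $x\in N_g\cap(S_g^{-1}[\eta_{\A}(a)^{c}])^{c}=g^{+}(\eta_{\A}(a))$.

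\textbf{The isomorphism $(\mathbb{F}^{+})_{+}\cong\mathbb{F}$.} Let $S_f',S_g',N_f',N_g'$ be the data of $(\mathbb{F}^{+})_{+}$. Since $f(\varnothing)=N_f^{c}$ (an up-set, as $N_f$ is a down-set), ${\uparrow}w\nleq f(\varnothing)$ iff $w\notin N_f^{c}$ iff $w\in N_f$, so $\varepsilon_W$ carries $N_f$ onto $N_f'$; dually $g(W)=N_g$ gives $\varepsilon_W[N_g]=N_g'$. For the accessibility relation, for $w\in N_f$ we have ${\uparrow}w\,S_f'\,{\uparrow}v$ iff $w\in f({\uparrow}v)=N_f^{c}\cup S_f^{-1}[{\uparrow}v]$, i.e.\ (since $w\notin N_f^{c}$) there is $v'\geq v$ with $wS_f v'$; but by ${\geq_f}\circ S_f\circ{\geq}\subseteq S_f$ this is equivalent to $wS_f v$, so $\varepsilon_W$ is a frame isomorphism on the $f$-part. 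The $g$-part is handled symmetrically: the meet-irreducibles of $\mathbb{F}^{+}$ are the complements $W\setminus{\downarrow}w$, one computes $g(W\setminus{\downarrow}v)=\{w\in N_g\mid\neg(wS_g v)\}$ using that $S_g$ is up-closed in its second coordinate by ${\leq_g}\circ S_g\circ{\leq}\subseteq S_g$, and concludes ${\uparrow}w\,S_g'\,{\uparrow}v\iff wS_g v$ for $w\in N_g$.

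\textbf{Main obstacle.} The delicate part is the multiplicative ($g$) bookkeeping: unlike $f$, it runs through the auxiliary order isomorphism $\kappa$ between completely join-irreducibles and completely meet-irreducibles and through the dual description of the box-type operation, and one must use precisely that $g$ is multiplicative (preserves non-empty meets) rather than a dual operator (preserves all meets, including $\top$) --- the failure at $\top$ being exactly what $N_g$ is designed to record, just as $N_f$ records the failure of $f$ at $\bot$. Apart from this, the only thing to watch is the variance of the order conventions ($\geq$ versus $\leq$, up-sets versus down-sets) between the algebra and the frame, which must be tracked consistently but hides no real difficulty.
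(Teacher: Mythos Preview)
Your proof is correct. The paper does not actually give a proof of this proposition; it states explicitly that no precise reference could be found and that the result ``should perhaps be considered folklore,'' so there is nothing in the paper to compare your argument against. Your verification---reducing to the standard Birkhoff-type duality $\mathbb{B}\cong\mathcal{P}^{\uparrow}(\jty(\mathbb{B}),\geq)$ for the underlying perfect BDL and then checking separately that $f$, $g$, $N_f$, $N_g$, $S_f$, $S_g$ are transported correctly by $\eta_{\A}$ and $\varepsilon_W$---is exactly what the folklore status suggests: a routine but careful unpacking. The one place that requires some genuine care, as you note, is the $g$-side, where one must pass through the order isomorphism $\kappa$ and use complete multiplicativity on \emph{non-empty} meets only (the failure at $\top$ being absorbed by $N_g$); your handling of this is sound. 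A very minor remark: the fact that $N_f$ is a down-set and $N_g$ an up-set in the frame order does not actually require monotonicity of $f$ or $g$---it follows directly from the shape of the defining conditions $x\nleq f(\bot)$ and $x\leq g(\top)$---but this does not affect the argument.
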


\section*{Part I: J\'onsson-style canonicity}
\addcontentsline{toc}{section}{\underline{Part I: J\'onsson-style canonicity}}

\section{Preliminaries on algebraic canonicity}

Technically, the starting point of the results of the present paper is the analysis of the proof in \cite{GeNaVe05}, reported in the present section. Subsections \ref{Subsec:can:ext:BDL} and \ref{subsec:can:ext:maps} collects the basic definitions and preliminaries on canonical extensions of bounded distributive lattices, and on $\sigma$- and $\pi$-extensions of maps. In Subsection \ref{ssec:continuity properties}, we report  and discuss some properties of the $\sigma$- and $\pi$-extensions which are pivotal to the J\'onsson-style argument for canonicity. In Subsection \ref{subsec:dist:latt:expansions}, we collect preliminaries on a slightly modified formulation of the setting of Distributive Modal Logic, which gives rise to a class of languages, indexed by certain parameters. In Subsection \ref{subsec:DLE:eta}, applying the methodology of unified correspondence, we define the Sahlqvist classes relative to each of the languages defined in Subsection \ref{subsec:dist:latt:expansions}.

\subsection{Canonical extensions of bounded distributive lattices}
\label{Subsec:can:ext:BDL}
In the present section, $\A, \B, \C$, possibly with sub- or superscripts, denote bounded distributive lattices (BDLs).

For any BDL $\mathbb{A}=(A, \lor, \land, \perp, \top)$, let $\mathbb{A}^{\partial} := (A, \land, \lor, \top, \perp)$ be its \emph{dual} BDL. Moreover, let $\mathbb{A}^1: =\mathbb{A}$.

An \emph{$n$-order type} $\epsilon$ is an element of $\{1, \partial\}^n$, and its $i$-th coordinate is denoted $\epsilon_i$. We omit $n$ when it is clear from the context. Let $\epsilon^\partial$ denote the \emph{dual order type} of $\epsilon$, that is, $\epsilon^\partial_i: =1$ (resp.\ $\epsilon^\partial_i: =\partial$) if $\epsilon_i=\partial$ (resp.\ $\epsilon_i=1$). Given an order type $\epsilon$, we let $\mathbb{A}^{\epsilon}$ be the BDL $\mathbb{A}^{\epsilon_1}\times\ldots\times\mathbb{A}^{\epsilon_n}$. For an order type $\epsilon$ and $\overline{a}, \overline{b}\in \A^\epsilon$, we let $\overline{a}\leq_{\epsilon}\overline{b} $ iff for each $1\leq i\leq n$, we have $a_i\leq b_i$ if $\epsilon _i = 1$ and $b_i\leq a_i$ if $\epsilon _i = \partial$.

\begin{definition}\label{def:order theoretic properties}
Let $1\leq i\leq n$. An $n$-ary map $f:\Pi_i\mathbb{A}_i\rightarrow\mathbb{B}$ is:\footnote{This terminology is taken from \cite{HBMoL}.}
\begin{itemize}
\item \emph{additive} if $f$ preserves non-empty finite joins in each coordinate.
\item \emph{multiplicative} if $f$ preserves non-empty finite meets in each coordinate.
\item \emph{p-additive} if $f$ preserves non-empty finite joins in $\Pi_i\mathbb{A}_i$.
\item \emph{p-multiplicative} if $f$ preserves non-empty finite meets in $\Pi_i\mathbb{A}_i$.
\item \emph{normal} when for any $\overline{a}\in \Pi_i\mathbb{A}_i$, if $a_i=\bot$ for some $1\leq i\leq n$, then $f(\overline{a})=\bot$.
\item \emph{dually normal} when for any $\overline{a}\in\Pi_i\mathbb{A}_i$, if $a_i=\top$ for some $1\leq i\leq n$, then $f(\overline{a})=\top$.
\item an \emph{operator} if $f$ is additive and normal.
\item a \emph{dual operator} if $f$ is multiplicative and dually normal.
\item \emph{join-preserving} if $f$ preserves arbitrary finite joins\footnote{Hence also $\bot=\bigvee\varnothing$ is preserved.} in the product.
\item \emph{meet-preserving} if $f$ preserves arbitrary finite meets\footnote{Hence also $\top=\bigwedge\varnothing$ is preserved.} in the product.
\item \emph{completely additive} if $f$ preserves non-empty arbitrary joins in each coordinate.
\item \emph{completely multiplicative} if $f$ preserves non-empty arbitrary meets in each coordinate.
\item \emph{completely p-additive} if $f$ preserves non-empty arbitrary joins in $\Pi_i\mathbb{A}_i$.
\item \emph{completely p-multiplicative} if $f$ preserves non-empty arbitrary meets in $\Pi_i\mathbb{A}_i$.
\item a \emph{complete operator} if $f$ preserves arbitrary joins in each coordinate.
\item a \emph{complete dual operator} if $f$ preserves arbitrary meets in each coordinate.
\item \emph{completely join-preserving} if $f$ preserves arbitrary joins in the product.
\item \emph{completely meet-preserving} if $f$ preserves arbitrary meets in the product.
\end{itemize}
\end{definition}

\begin{definition}\label{def:can:ext2.3}
The \emph{canonical extension} of a BDL $\mathbb{A}$ is a complete BDL $\mathbb{A}^\delta$ containing $\mathbb{A}$ as a sublattice, such that:
\begin{enumerate}
\item \emph{(denseness)} every element of $\mathbb{A}^\delta$ can be expressed both as a join of meets and as a meet of joins of elements from $\mathbb{A}$;
\item \emph{(compactness)} for all $S,T \subseteq \mathbb{A}$ with $\bigwedge S \leq \bigvee T$ in $\mathbb{A}^\delta$, there exist some finite sets $F \subseteq S$ and $G\subseteq T$ s.t.\ $\bigwedge F \leq \bigvee G$.
\end{enumerate}
\end{definition}

It is well known that the canonical extension of a BDL is unique up to isomorphism (cf.\ \cite[Section 2.2]{GeNaVe05}), and that the canonical extension of a BDL is a perfect BDL (cf.\ \cite[Definition 2.14]{GeNaVe05})\label{canext bdl is perfect}:

\begin{definition}
A BDL $\bba$ is \emph{perfect} if it is complete, completely distributive, and completely join-generated by the set $J^{\infty}(\bba)$ of the completely join-irreducible elements of $\mathbb{A}$, as well as completely meet generated by the set $M^{\infty}(\bba)$ of the completely meet-irreducible elements of $\mathbb{A}$.
\end{definition}

An element $x \in \ca$ is \emph{closed} (resp.\ \emph{open}) if it is the meet (resp.\ join) of some subset of $\A$. The set of closed (resp.\ open) elements of $\ca$ is denoted by $K(\ca)$ (resp.\ $O(\ca)$). It is easy to see that the denseness condition in Definition \ref{def:can:ext2.3} implies that $J^{\infty}(\bbas)\subseteq K (\mathbb{A}^\delta)$ and $M^{\infty}(\bbas)\subseteq O (\mathbb{A}^\delta)$ (cf.\ \cite{GeNaVe05}, page 9). Recall that any $j\in J^{\infty}({\A^{\delta}}^n)$ is of the form $\overline{x}$ with $x = \bot$ in all coordinates but one, in which $x\in J^{\infty}(\bba)$  (cf.\ \cite{DaPr90}, Section 5.15).

The following properties can be easily checked about the interaction among canonical extension, order duals, and products:
\begin{lemma}[cf.\ \cite{GeNaVe05}, Section 2.2] For all BDLs $\A, \A_i$ with $1\leq i\leq n$,
\begin{enumerate}
\item $(\mathbb{A}^{\partial})^{\delta}\cong(\mathbb{A}^{\delta})^{\partial}$;
\item $(\Pi_{i = 1}^n\mathbb{A}_i)^{\delta}\cong\Pi_{i = 1}^n(\mathbb{A}_i^{\delta})$;
\item $K((\mathbb{A}^{\partial})^{\delta})=O(\mathbb{A}^{\delta})^{\partial}$;
\item $O((\mathbb{A}^{\partial})^{\delta})=K(\mathbb{A}^{\delta})^{\partial}$;
\item $K((\Pi_{i = 1}^n\mathbb{A}_i)^{\delta})=\Pi_{i = 1}^n K(\mathbb{A}_i^{\delta})$;
\item $O((\Pi_{i = 1}^n\mathbb{A}_i)^{\delta})=\Pi_{i = 1}^n O(\mathbb{A}_i^{\delta})$.
\end{enumerate}

\end{lemma}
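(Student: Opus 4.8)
The plan is to prove each of the six items by unwinding the defining properties of the canonical extension (denseness and compactness from Definition \ref{def:can:ext2.3}) together with the structure of order duals and finite products, and to treat items (1)--(2) first, since items (3)--(6) will follow from them by combining with the definitions of $K(\cdot)$ and $O(\cdot)$ as sets of closed and open elements.

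For item (1), I would observe that $(\mathbb{A}^\partial)^\delta$ is by definition the unique (up to isomorphism) complete BDL containing $\mathbb{A}^\partial$ as a sublattice and satisfying denseness and compactness. It then suffices to check that $(\mathbb{A}^\delta)^\partial$ has these properties relative to $\mathbb{A}^\partial$. Since $\mathbb{A}$ is a sublattice of $\mathbb{A}^\delta$, also $\mathbb{A}^\partial$ is a sublattice of $(\mathbb{A}^\delta)^\partial$ (taking duals swaps $\lor$ and $\land$ and swaps $\top$ and $\bot$ consistently on both). For denseness: a join of meets of elements of $\mathbb{A}$ in $\mathbb{A}^\delta$ is exactly a meet of joins of elements of $\mathbb{A}^\partial$ in $(\mathbb{A}^\delta)^\partial$, and vice versa, so the two-sided denseness condition is self-dual. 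For compactness: the statement ``$\bigwedge S \leq \bigvee T$ implies $\bigwedge F \leq \bigvee G$ for some finite $F \subseteq S$, $G \subseteq T$'' becomes, under the order reversal, ``$\bigvee S \geq \bigwedge T$ implies $\bigvee F \geq \bigwedge G$'', i.e.\ exactly compactness read in $(\mathbb{A}^\delta)^\partial$ with the roles of $S$ and $T$ interchanged. Hence $(\mathbb{A}^\delta)^\partial$ satisfies the defining properties of $(\mathbb{A}^\partial)^\delta$, and uniqueness gives the isomorphism.

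For item (2), I would similarly check that $\Pi_{i=1}^n (\mathbb{A}_i^\delta)$ satisfies the defining properties of $(\Pi_{i=1}^n \mathbb{A}_i)^\delta$ relative to $\Pi_{i=1}^n \mathbb{A}_i$. The product of complete BDLs is a complete BDL, and the coordinatewise embeddings $\mathbb{A}_i \hookrightarrow \mathbb{A}_i^\delta$ assemble into a sublattice embedding $\Pi_i \mathbb{A}_i \hookrightarrow \Pi_i \mathbb{A}_i^\delta$. Denseness is argued componentwise: given $\overline{x} \in \Pi_i \mathbb{A}_i^\delta$, write each $x_i$ as a join of meets of elements of $\mathbb{A}_i$; since joins and meets in a product are computed coordinatewise, assembling these (padding the other coordinates appropriately with $\bot$ or $\top$, which lie in $\mathbb{A}_i$) expresses $\overline{x}$ as a join of meets of elements of $\Pi_i \mathbb{A}_i$, and dually for meet-of-joins. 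For compactness, given $S, T \subseteq \Pi_i \mathbb{A}_i$ with $\bigwedge S \leq \bigvee T$ coordinatewise, one applies compactness of $\mathbb{A}_i^\delta$ in each of the finitely many coordinates to extract finite $F_i, G_i$, and then takes $F, G$ to be (preimages of) the unions of these finite sets; finiteness is preserved because $n$ is finite. Again uniqueness of the canonical extension yields the isomorphism.

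Finally, items (3)--(6) follow by translating the definitions of $K$ and $O$ through the isomorphisms of (1) and (2). For (3): an element of $(\mathbb{A}^\partial)^\delta$ is closed iff it is a meet of elements of $\mathbb{A}^\partial$; under the isomorphism $(\mathbb{A}^\partial)^\delta \cong (\mathbb{A}^\delta)^\partial$ this meet becomes a join in $\mathbb{A}^\delta$ of elements of $\mathbb{A}$, i.e.\ an open element of $\mathbb{A}^\delta$, so the set of closed elements of $(\mathbb{A}^\partial)^\delta$ corresponds to $O(\mathbb{A}^\delta)^\partial$; item (4) is symmetric. For (5) and (6): by item (2), an element of $(\Pi_i \mathbb{A}_i)^\delta$ is a tuple in $\Pi_i \mathbb{A}_i^\delta$, and it is a meet of elements of $\Pi_i \mathbb{A}_i$ iff each coordinate is a meet of elements of $\mathbb{A}_i$ (using that meets are coordinatewise and that the padding elements $\top \in \mathbb{A}_i$ are available), i.e.\ iff each coordinate lies in $K(\mathbb{A}_i^\delta)$; dually for $O$. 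The only point requiring a little care — and the closest thing to an obstacle — is verifying in item (2) that the coordinatewise characterization of closed/open elements is exactly right, i.e.\ that a tuple whose coordinates are individually closed is closed in the product and conversely; this is where one uses that finite meets (including the empty one, $\top$) are computed coordinatewise and that $\top, \bot \in \mathbb{A}_i$, so that a meet of tuples from $\Pi_i \mathbb{A}_i$ can always be arranged to vary freely in each coordinate. None of this is deep, but it is the step where the bookkeeping must be done honestly.
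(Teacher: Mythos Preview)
Your proposal is correct. The paper does not actually supply a proof of this lemma: it is stated with the remark that ``the following properties can be easily checked'' and a citation to \cite{GeNaVe05}, Section~2.2. Your argument---verifying that $(\mathbb{A}^\delta)^\partial$ and $\Pi_i \mathbb{A}_i^\delta$ satisfy the denseness and compactness axioms of Definition~\ref{def:can:ext2.3} relative to $\mathbb{A}^\partial$ and $\Pi_i \mathbb{A}_i$ respectively, and then invoking uniqueness of the canonical extension---is exactly the standard route, and your derivation of (3)--(6) from (1)--(2) via the definitions of $K$ and $O$ is the intended one. The bookkeeping point you flag for compactness in the product (lifting finite subsets of the projections back to finite subsets of $S$ and $T$) is handled correctly by your parenthetical remark about preimages, using that $n$ is finite.
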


\subsection{Canonical extensions of maps}\label{subsec:can:ext:maps}

Let $\mathbb{A},\mathbb{B}$ be BDLs. A monotone map $f : \mathbb{A}\rightarrow \mathbb{B}$ can be extended to a map $\ca\to\B^{\delta}$ in two canonical ways. Let $f^\sigma$ and $f^\pi$ respectively denote the $\sigma$- and the $\pi$\emph{-extension} of $f$, defined as follows:

\begin{definition}\label{def:canonical:extension:maps}
For any monotone map $f : \mathbb{A}\rightarrow\mathbb{B}$ and any $u\in \mathbb{A}^\delta$, let
\[f^\sigma (u) : =\bigvee \{ \bigwedge \{f(a): x\leq a\in \mathbb{A}\}: u \geq x \in K (\ca)\}\]
\[f^\pi (u): =\bigwedge \{ \bigvee \{f(a): y\geq a\in \mathbb{A}\}: u \leq y \in O (\ca)\}.\]
\end{definition}
The definitions above readily imply that $f^\sigma\leq f^\pi$\label{fsigmaleqfpi}.
\begin{definition}

A monotone map $f : \mathbb{A}\rightarrow\mathbb{B}$ is \emph{smooth} if $f^\sigma(u)=f^\pi(u)$ for every $u\in\A$.

\end{definition}

For any $f:\mathbb{A}\to\mathbb{B}$, let $f^{\partial}:\mathbb{A}^{\partial}\to\mathbb{B}^{\partial}$ be defined by $f^{\partial}(a):=f(a)$ for all $a\in\A$. Hence, it is not difficult to see that $(f^{\partial})^{\sigma}=(f^{\pi})^{\partial}$ and $(f^{\partial})^{\pi}=(f^{\sigma})^{\partial}$.

The following lemma slightly refines \cite[Lemma 2.22]{GeJo04} in the sense that additive (resp.\ multiplicative) maps do not necessarily preserve empty join (resp.\ meet). In what follows, we will find it useful to let the symbol $J^{\infty}_{\bot}(\mathbb{A}^{\delta})$ abbreviate the set $J^{\infty}(\mathbb{A}^{\delta})\cup \{\bot\}$ for any BDL $\mathbb{A}$.

\begin{lemma}\label{lem:complete:additive}

For any monotone map $f:\Pi\A_m\to\B$,

\begin{enumerate}
\item if $f$ is additive, then $f^{\sigma}:\Pi\A^{\delta}_m\to\B^{\delta}$ is completely additive;
\item if $f$ is multiplicative, then $f^{\pi}:\Pi\A^{\delta}_m\to\B^{\delta}$ is completely multiplicative.
\item if $f$ is an operator, then $f^{\sigma}:\Pi\A^{\delta}_m\to\B^{\delta}$ is a complete operator;
\item if $f$ is a dual operator, then $f^{\pi}:\Pi\A^{\delta}_m\to\B^{\delta}$ is a complete dual operator.
\end{enumerate}

\end{lemma}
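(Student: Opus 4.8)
The plan is to prove all four items by reducing them to the first one, using the order-dual and the componentwise structure of join-irreducibles in products. Let me first fix notation: write $\A^\delta$ for $\Pi_{m}\A_m^\delta$, which by Lemma (the interaction lemma) is the canonical extension of $\Pi_m \A_m$, and recall that every $j \in J^\infty(\A^\delta)$ is of the form $\overline{x}$ with a single nonzero coordinate $x \in J^\infty(\A_i^\delta)$ for some $i$, and that $J^\infty(\A_i^\delta) \subseteq K(\A_i^\delta)$. So I will work with the closed elements $K(\A^\delta) = \Pi_m K(\A_m^\delta)$ throughout, since the $\sigma$-extension is computed as a join of meets over closed elements below.

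\textbf{Item (1).} First I would establish the key ``restricted formula'' for $f^\sigma$ on tuples of closed elements: for $\overline{k} \in K(\A^\delta)$,
\[
f^\sigma(\overline{k}) = \bigwedge\{ f(\overline{a}) : \overline{a} \in \A,\ \overline{k} \leq \overline{a} \},
\]
i.e.\ on closed elements the outer join in the definition collapses because $\overline k$ itself is the witness; this is standard and follows from monotonicity. Next, since $\A^\delta$ is perfect and completely join-generated by $J^\infty(\A^\delta)$, every $u \in \A^\delta$ is the join of the join-irreducibles below it; and since each such join-irreducible is closed and lies below $u$, the outer join in the definition of $f^\sigma(u)$ can be taken over $J^\infty_\bot(\A^\delta)$ rather than over all of $K(\A^\delta)$ — this is exactly the refinement of \cite[Lemma 2.22]{GeJo04} being alluded to, and it needs the (only finitely many coordinates) denseness plus the observation that a meet of closed elements below $u$ is again below $u$ and is approximated from below by join-irreducibles. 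Now to show complete additivity in, say, the $i$-th coordinate: fix all other coordinates at some $u_j \in \A_j^\delta$ and let $\{v_t\}_{t \in T}$ be a nonempty family in $\A_i^\delta$ with $v = \bigvee_t v_t$. The inequality $\bigvee_t f^\sigma(\ldots, v_t, \ldots) \leq f^\sigma(\ldots, v, \ldots)$ is immediate from monotonicity. For the reverse, by the previous step $f^\sigma(\ldots, v, \ldots) = \bigvee\{ f^\sigma(\overline{j}) : \overline{j} \in J^\infty_\bot(\A^\delta),\ \overline j \leq (\ldots, v, \ldots)\}$, and each such $\overline j$ has its $i$-th coordinate a join-irreducible $x \leq v = \bigvee_t v_t$; since $x$ is completely join-\emph{prime} in the perfect lattice $\A_i^\delta$, we get $x \leq v_t$ for some $t$, so $\overline j \leq (\ldots, v_t, \ldots)$ and hence $f^\sigma(\overline j) \leq f^\sigma(\ldots, v_t, \ldots) \leq \bigvee_t f^\sigma(\ldots, v_t, \ldots)$. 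Taking the join over all such $\overline j$ gives the reverse inequality. The point where additivity of $f$ (as opposed to mere monotonicity) is actually used is in verifying that the above ``$J^\infty_\bot$'' formula for $f^\sigma$ holds — one needs $f(\overline a \vee \overline b) = f(\overline a) \vee f(\overline b)$ on nonempty joins to commute the meet-of-$f$-values past the join-irreducible decomposition; this is the main obstacle and the heart of the refinement of \cite[Lemma 2.22]{GeJo04}.

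\textbf{Items (2)--(4).} Item (2) is obtained from item (1) by order duality: if $f : \Pi_m \A_m \to \B$ is multiplicative, then $f^\partial : \Pi_m \A_m^\partial \to \B^\partial$ is additive (meets become joins), so by (1) $(f^\partial)^\sigma$ is completely additive $\Pi_m (\A_m^\partial)^\delta \to (\B^\partial)^\delta$; now use $(f^\partial)^\sigma = (f^\pi)^\partial$ (noted just before the lemma) together with $(\A_m^\partial)^\delta \cong (\A_m^\delta)^\partial$ and the fact that complete additivity of $g^\partial$ on the duals is exactly complete multiplicativity of $g$ on the originals, to conclude $f^\pi$ is completely multiplicative. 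For item (3): if $f$ is an operator it is in particular additive, so by (1) $f^\sigma$ is completely additive; it remains to check $f^\sigma$ preserves $\bot$ in each coordinate, i.e.\ $f^\sigma(\ldots, \bot, \ldots) = \bot$. But $\bot \in K(\A_i^\delta)$ and on closed tuples $f^\sigma(\ldots,\bot,\ldots) = \bigwedge\{f(\overline a) : \overline a \in \A,\ (\ldots,\bot,\ldots) \leq \overline a\}$; since $f$ is normal, $\bot \leq \bot \in \A_i$ and the tuple $\overline a$ with $i$-th coordinate $\bot$ and other coordinates $\top$ is in the index set and has $f(\overline a) = \bot$, so the meet is $\bot$. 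Hence $f^\sigma$ preserves arbitrary (including empty) joins in each coordinate, i.e.\ is a complete operator. Item (4) follows from (3) by the same order-duality argument as (2)$\Leftarrow$(1). I expect items (2)--(4) to be entirely routine once (1) and the duality bookkeeping are in place; the only care needed is to keep straight which of $\sigma,\pi$ corresponds to which after dualizing, which is handled by the identities $(f^\partial)^\sigma = (f^\pi)^\partial$ and $(f^\partial)^\pi = (f^\sigma)^\partial$ recorded above.
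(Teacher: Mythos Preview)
Your argument for item (i) has a genuine gap: the ``$J^\infty_\bot$ formula'' you claim for $f^\sigma$ is false. You write $\A^\delta = \Pi_m \A_m^\delta$ and assert that
\[
f^\sigma(\overline{u}) \;=\; \bigvee\bigl\{\, f^\sigma(\overline{j}) : \overline{j}\in J^\infty_\bot(\A^\delta),\ \overline{j}\le \overline{u}\,\bigr\},
\]
where you correctly note that each $\overline{j}\in J^\infty(\A^\delta)$ has at most one nonzero coordinate. But take $f=\wedge:\A\times\A\to\A$ on a nontrivial BDL; this map is additive coordinate-wise by distributivity. Then $f^\sigma(u,v)=u\wedge v$, whereas every $\overline{j}\in J^\infty_\bot(\A^\delta\times\A^\delta)$ has a $\bot$ in at least one slot, so $f^\sigma(\overline{j})=\bot$, and your right-hand side collapses to $\bot$. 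The source of the error is the line ``one needs $f(\overline a \vee \overline b) = f(\overline a) \vee f(\overline b)$ on nonempty joins'': that identity is \emph{p-additivity} (preservation of nonempty joins in the product), not additivity in each coordinate, and additive maps need not satisfy it---again $\wedge$ is the counterexample. So the step you flag as ``the main obstacle'' is not just an obstacle; it is simply false under the hypotheses available.

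The paper's proof avoids this by never claiming such a global formula. It works one coordinate at a time and proves directly the join-prime reformulation of complete additivity in coordinate $i$: if $q\in J^\infty(\B^\delta)$ and $q\le f^\sigma(\overline{u})$, then $q\le f^\sigma(\overline{u}[x/u^{(i)}])$ for some $x\in J^\infty_\bot(\A_i^\delta)$ with $x\le u^{(i)}$. The key ingredient your sketch is missing is \emph{compactness}: assuming the conclusion fails for every such $x$, one obtains witnesses $\overline{a}_x\in\Pi_m\A_m$ with $q\nleq f(\overline{a}_x)$; compactness extracts a finite subfamily whose $i$-th coordinates still cover $u^{(i)}$, and then \emph{coordinate-wise} additivity of $f$ (a finite nonempty join in the $i$-th slot only, with the other slots fixed at the meet of the finitely many $a_x^{(j)}$) yields $q\nleq f^\sigma(\overline{u})$. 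Your reductions for items (ii)--(iv) via order-duality and the identities $(f^\partial)^\sigma=(f^\pi)^\partial$ are fine and match the paper, but they rest on item (i), which needs to be redone along these lines.
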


\begin{proof}
(i). Fix any coordinate $1\leq i\leq n$. Let $\overline{u}[x/u^{(i)}]$ denote the substitution of $u$ by $x$ in the $i^{th}$ coordinate of $\overline{u}$. It is enough to show that, for any $\overline{u}\in \Pi\mathbb{A}^{\delta}_m$ and any $q\in J^{\infty}(\mathbb{B}^{\delta})$, if $q\leq f^{\sigma}(\overline{u})$, then $q\leq f^{\sigma}(\overline{u}[x/u^{(i)}])$ for some $x\in J^{\infty}_{\bot}(\mathbb{A}_i^\delta)$ such that $x\leq u^{(i)}$.
By definition, $f^{\sigma}(\overline{u}) = \bigvee\{f^{\sigma}(\overline{k})\mid \overline{k}\in \Pi K(\mathbb{A}^{\delta}_m) \mbox{ and } \overline{k}\leq \overline{u}\}$, so we can assume without loss of generality that $\overline{u}\in \Pi K(\mathbb{A}^{\delta}_m)$. Assume contrapositively that $q\nleq f^{\sigma}(\overline{u}[x/u^{(i)}])$ for any $x\in J^{\infty}_{\bot}(\mathbb{A}_i^\delta)$ such that $x\leq u^{(i)}$. So for any such $x$ there exists some $\overline{a}_x\in \Pi\mathbb{A}_m$ s.t.\ $\overline{u}[x/u^{(i)}]\leq \overline{a}_x$ and $q\nleq f^{\sigma}(\overline{a}_x)$. Since $$K(\mathbb{A}^{\delta}_i)\ni u^{(i)} =\bigvee\{x\in J^{\infty}_{\bot}(\mathbb{A}_i^\delta) \mid x\leq u^{(i)}\}\leq \bigvee\{a^{(i)}_x\mid x\in J^{\infty}_{\bot}(\mathbb{A}_i^\delta) \mbox{ and } x\leq u^{(i)}\},$$
by compactness, $ u^{(i)} \leq \bigvee\{a^{(i)}_x\mid x\in F \}$
for some finite and non-empty $F \subseteq \{x\in J^{\infty}_{\bot}(\mathbb{A}_i^\delta) \mid x\leq u^{(i)}\}$. Let $a = \bigvee\{a^{(i)}_x\mid x\in F\}$ and let $b^{(j)} = \bigwedge\{a^{(j)}_x\mid x\in F\}$ for any $j\neq i$. Clearly $\overline{b}[a/\bot_i]\in \Pi \mathbb{A}_m$, and by definition, $f^{\sigma}(\overline{u})\leq f(\overline{b}[a/\bot_i])$. Moreover, by the monotonicity of $f^{\sigma}$, it follows that $q\nleq f^\sigma(\overline{b}[a^{(i)}_x/\bot_i]) = f(\overline{b}[a^{(i)}_x/\bot_i])$ for each $x\in F$: indeed, if $q\leq f^\sigma(\overline{b}[a^{(i)}_x/\bot_i])$, then from $\overline{b}[a^{(i)}_x/\bot_i]\leq\overline{a}_x$ we get that $q\leq f^{\sigma}(\overline{a}_x)$, a contradiction. Since $f$ is additive in its $i$-th coordinate, $q\nleq \bigvee\{ f(\overline{b}[a^{(i)}_x/\bot_i])\mid x\in F\} = f(\overline{b}[\bigvee\{a^{(i)}_x\mid x\in F\}/\bot_i]) = f(\overline{b}[a/\bot_i])$. Since $f^{\sigma}(\overline{u})\leq f(\overline{b}[a/\bot_i])$, this implies that $q\nleq f^{\sigma}(\overline{u})$, as required.

Item (ii) is order dual to (i). Items (iii) and (iv) immediately follow from (i) and (ii) respectively, and the identities $f^{\sigma}(\bot) = f(\bot)$ and $f^{\sigma}(\top) = f(\top)$.
\end{proof}

\begin{theorem}[cf.\ \cite{Ri52}]\label{thm:ribiero}
For all DL maps $f:\A\rightarrow\B$ and $g:\B\rightarrow\C$, if $f$ and $g$ are monotone, then $(g f)^\sigma\leq g^\sigma f^\sigma$.

\end{theorem}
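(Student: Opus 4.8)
The plan is to prove the inequality $(gf)^\sigma \leq g^\sigma f^\sigma$ by exploiting the fact that $\sigma$-extensions can be computed using closed elements, together with the monotonicity of all maps involved and the characterization of $f^\sigma$ as a join over values at closed points. First I would recall the two key facts from Definition \ref{def:canonical:extension:maps}: for a monotone map $h$, $h^\sigma(u) = \bigvee\{h^\sigma(x) \mid u \geq x \in K(\A^\delta)\}$, and for $x \in K(\A^\delta)$, $h^\sigma(x) = \bigwedge\{h(a) \mid x \leq a \in \A\}$. So it suffices, by the join-form of $(gf)^\sigma(u)$, to fix an arbitrary closed $x \leq u$ and show $(gf)^\sigma(x) \leq g^\sigma(f^\sigma(x))$, since $f^\sigma(x) \leq f^\sigma(u)$ and $g^\sigma$ is monotone will then give $(gf)^\sigma(x) \leq g^\sigma(f^\sigma(u))$ and hence the desired inequality after taking the join over all such $x$.

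Next I would fix a closed $x \in K(\A^\delta)$ and compute. By definition $(gf)^\sigma(x) = \bigwedge\{g(f(a)) \mid x \leq a \in \A\}$. On the other side, $f^\sigma(x) = \bigwedge\{f(a) \mid x \leq a \in \A\}$ is a meet of elements of $\B$, hence an element of $K(\B^\delta)$ (a closed element), so $g^\sigma(f^\sigma(x)) = \bigwedge\{g(b) \mid f^\sigma(x) \leq b \in \B\}$. The key observation is then that for each $a \in \A$ with $x \leq a$, we have $f^\sigma(x) \leq f(a)$ (because $f(a)$ is one of the elements in the meet defining $f^\sigma(x)$), and $f(a) \in \B$, so $f(a)$ is one of the witnesses $b$ in the meet defining $g^\sigma(f^\sigma(x))$; therefore $g^\sigma(f^\sigma(x)) \leq g(f(a))$. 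Taking the meet over all $a \in \A$ with $x \leq a$ yields $g^\sigma(f^\sigma(x)) \leq \bigwedge\{g(f(a)) \mid x \leq a \in \A\} = (gf)^\sigma(x)$.

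At this point I notice that the argument above actually gives the \emph{reverse} inequality at closed points, $g^\sigma(f^\sigma(x)) \leq (gf)^\sigma(x)$, rather than the one needed. So the real subtlety — and what I expect to be the main obstacle — is getting the direction right: one cannot simply reduce to closed points and compare the meets, because the meet defining $g^\sigma(f^\sigma(x))$ ranges over \emph{all} $b \in \B$ above $f^\sigma(x)$, which is a potentially larger (hence $g^\sigma$-smaller after meet) collection than just the $f(a)$'s. The correct route is instead to use the dual description: $g^\sigma f^\sigma(u) = \bigvee\{g^\sigma(f^\sigma(x)) \mid u \geq x \in K(\A^\delta)\}$ by the join-form, and for closed $x$, since $f^\sigma(x)$ is closed in $\B^\delta$ we again have $g^\sigma(f^\sigma(x)) = \bigvee\{g^\sigma(k) \mid f^\sigma(x) \geq k \in K(\B^\delta)\}$; but $f^\sigma(x)$ is itself closed, so this join includes $k = f^\sigma(x)$ and we get $g^\sigma(f^\sigma(x)) = g^\sigma(f^\sigma(x))$ trivially. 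The workable comparison is: $(gf)^\sigma(u) = \bigvee_{u \geq x \in K}(gf)^\sigma(x)$ where $(gf)^\sigma(x) = \bigwedge_{x \leq a}g(f(a))$, and one shows $\bigwedge_{x\leq a} g(f(a)) \leq g^\sigma(f^\sigma(x))$ by observing that $\{f(a) \mid x \leq a \in \A\}$ is a \emph{down-directed} generating set whose meet is $f^\sigma(x)$, so by compactness every $b \in \B$ with $b \geq f^\sigma(x)$ satisfies $b \geq f(a)$ for some such $a$ (this uses $f^\sigma(x)$ being a filtered meet of elements of $\A$), whence $g(b) \geq g(f(a)) \geq \bigwedge_{x \leq a'} g(f(a'))$ by monotonicity, and taking the meet over all such $b$ gives $g^\sigma(f^\sigma(x)) \geq (gf)^\sigma(x)$. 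Summing (joining) over $x \leq u$ then yields $(gf)^\sigma(u) \leq g^\sigma(f^\sigma(u))$. The crux is thus the compactness step turning ``$b$ above the filtered meet'' into ``$b$ above some member of the meet'' — this is exactly where the defining property of canonical extensions enters and must be invoked carefully, keeping track of the fact that $f$ need only be monotone (not additive), so no preservation properties beyond monotonicity are available.
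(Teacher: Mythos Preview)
The paper does not supply its own proof of this theorem; it is stated with a citation to Ribeiro \cite{Ri52} and used as a black box. So there is no paper proof to compare against.

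Your second argument (after the false start) is correct and is the standard proof. To spell out the compactness step cleanly: for closed $x$, the set $\{a\in\A\mid x\leq a\}$ is down-directed (it is closed under binary meets since $\A$ is a lattice), hence so is its image $\{f(a)\mid x\leq a\}\subseteq\B$ under the monotone map $f$. If $b\in\B$ with $f^\sigma(x)=\bigwedge\{f(a)\mid x\leq a\}\leq b$, compactness of $\B^\delta$ yields a finite subset $F$ of the $f(a)$'s with $\bigwedge F\leq b$; by down-directedness there is some $a_0$ with $x\leq a_0$ and $f(a_0)\leq\bigwedge F\leq b$, whence $g(b)\geq g(f(a_0))\geq\bigwedge_{x\leq a}g(f(a))=(gf)^\sigma(x)$. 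Taking the meet over all such $b$ gives $g^\sigma(f^\sigma(x))\geq(gf)^\sigma(x)$, and then joining over closed $x\leq u$ and using monotonicity of $g^\sigma f^\sigma$ finishes. One small slip: you wrote ``filtered meet of elements of $\A$'' where you meant $\B$.

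Incidentally, your first computation was not wasted: it correctly shows the reverse inequality $g^\sigma(f^\sigma(x))\leq(gf)^\sigma(x)$ at closed points, so in fact $(gf)^\sigma$ and $g^\sigma f^\sigma$ agree on $K(\A^\delta)$. The global inequality $(gf)^\sigma\leq g^\sigma f^\sigma$ can then also be seen as an instance of Lemma~\ref{thm:largestUC:DML}: both maps are monotone extensions of $gf$ agreeing on closed elements, and $(gf)^\sigma$ is the largest UC one, but since $g^\sigma f^\sigma$ need not be UC this route requires exactly the pointwise comparison you carried out.
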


\subsection{Continuity properties}
\label{ssec:continuity properties}
In the present subsection, we report on and discuss some properties of $\sigma$- and $\pi$-extensions which are pivotal to the J\'onsson-style argument for canonicity. In the discussion, we correct some small mistakes in \cite{GeNaVe05}, the emendation of which is related to the ensuing generalization in Sections \ref{sec:weaker sufficient conditions} and \ref{sec:5}.

In what follows, we let $J^{\infty}_{\omega}(\A^{\delta}):=\bigcup \{J^{\infty}_n(\A^{\delta})\mid n\in\mathbb{N}\}$, where, for every $n\in\mathbb{N}$, the set $J^{\infty}_{n}(\A^{\delta})$ contains all $\bigvee J$ such that $J\subseteq J^{\infty}(\A^{\delta})$ and $|J|\leq n$.

\begin{definition}
\label{def:UC}
Any monotone map $f:\mathbb{A}^{\delta}\to\mathbb{B}^{\delta}$ is\footnote{Items 1 and 2 of Definition \ref{def:UC} are given in \cite{GeNaVe05} for arbitrary maps, but we restrict ourselves to the setting we need for the further development.

Items 3 and 4 are reported here as they appear in \cite{GeNaVe05}, and 5 appears in \cite{GeJo94} not as a definition, but as an equivalent characterization (cf.\ \cite[Corollary 3.6]{GeJo94}). 6 is the order dual version of 5.}:

\begin{enumerate}
\item \emph{upper continuous} (UC) if for any $u\in\mathbb{A}^{\delta}$ and any $q\in J^{\infty}(\mathbb{B}^{\delta})$, if $q\leq f(u)$ then $q\leq f(x)$ for some $x\in\kbbas$ s.t.\ $x\leq u$;
\item \emph{lower continuous} (LC) if for any $u\in\mathbb{A}^{\delta}$ and any $n\in M^{\infty}(\mathbb{B}^{\delta})$, if $n\geq f(u)$ then $n\geq f(y)$ for some $y\in\obbas$ s.t.\ $y\geq u$;
\item $f$ is \emph{Scott continuous}, if for all $u\in\mathbb{A}^{\delta}$ and all $q\in J^{\infty}(\mathbb{B}^{\delta})$, if $q\leq f(u)$ then there exists some $x\in J^{\infty}(\bbas)$ s.t.\ $x\leq u$ and $q\leq f(x)$;
\item $f$ is \emph{dually Scott continuous}, if for all $u\in\mathbb{A}^{\delta}$ and all $n\in M^{\infty}(\mathbb{B}^{\delta})$, if $n\geq f(u)$ then there exists some $y\in M^{\infty}(\bbas)$ s.t.\ $y\geq u$ and $n\geq f(y)$;
\item $f$ is \emph{continuous}, if for all $u\in\mathbb{A}^{\delta}$ and all $q\in J^{\infty}(\mathbb{B}^{\delta})$, if $q\leq f(u)$ then there exists some $x\in J^{\infty}_{\omega}(\bbas)$ s.t.\ $x\leq u$ and $q\leq f(x)$;
\item $f$ is \emph{dually continuous}, if for all $u\in\mathbb{A}^{\delta}$ and all $n\in M^{\infty}(\mathbb{B}^{\delta})$, if $n\geq f(u)$ then there exists some $y\in M^{\infty}_{\omega}(\bbas)$ s.t.\ $y\geq u$ and $n\geq f(y)$.
\end{enumerate}
\end{definition}

\begin{lemma}[\cite{GeJo04}, Theorem 2.15]\label{thm:largestUC:DML}
For any monotone map $f:\mathbb{A}\to\mathbb{B}$,
\begin{enumerate}
\item $f^{\sigma}:\mathbb{A}^{\delta}\to\mathbb{B}^{\delta}$ is the largest monotone UC extension of $f$ to $\mathbb{A}^{\delta}$;
\item $f^{\pi}:\mathbb{A}^{\delta}\to\mathbb{B}^{\delta}$ is the smallest monotone LC extension of $f$ to $\mathbb{A}^{\delta}$.
\end{enumerate}
\end{lemma}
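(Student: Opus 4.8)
The plan is to prove Lemma~\ref{thm:largestUC:DML} in two steps: first, that $f^\sigma$ is a monotone UC extension of $f$, and second, that it is the largest such. (Part~2 follows by order duality, replacing $\sigma$ by $\pi$, $K$ by $O$, joins by meets, and $J^\infty$ by $M^\infty$, using the identities $(f^\partial)^\sigma = (f^\pi)^\partial$ etc.\ recorded above.) That $f^\sigma$ extends $f$ and is monotone is standard from Definition~\ref{def:canonical:extension:maps}: for $a\in\A$ one checks that $a$ is both closed and open, so the outer join and inner meet in the definition of $f^\sigma(a)$ collapse to $f(a)$; monotonicity is immediate since enlarging $u$ enlarges the index set of the outer join (and we use that $K(\ca)$ is closed downwards under the elements witnessing the join).

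For the UC property of $f^\sigma$: suppose $q\in J^\infty(\B^\delta)$ with $q\leq f^\sigma(u)$. By definition $f^\sigma(u) = \bigvee\{\,\bigwedge\{f(a): x\leq a\in\A\} : u\geq x\in K(\ca)\,\}$. Since $\B^\delta$ is a perfect (hence completely distributive) lattice and $q$ is completely join-prime, $q\leq\bigvee$ of a family forces $q\leq$ one member of the family; so there is a single $x\in K(\ca)$ with $x\leq u$ and $q\leq\bigwedge\{f(a): x\leq a\in\A\}$. But by definition this meet is exactly $f^\sigma(x)$ (again because $x$ is closed, the outer join collapses to the single term indexed by $x$ itself). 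Hence $q\leq f^\sigma(x)$ with $x\in K(\ca)$, $x\leq u$, which is precisely the UC condition.

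For maximality: let $g:\ca\to\B^\delta$ be any monotone extension of $f$ satisfying UC; we must show $g\leq f^\sigma$ pointwise, i.e.\ $g(u)\leq f^\sigma(u)$ for all $u\in\ca$. Fix $u$ and fix $q\in J^\infty(\B^\delta)$ with $q\leq g(u)$; since $\B^\delta$ is completely join-generated by $J^\infty(\B^\delta)$ it suffices to show $q\leq f^\sigma(u)$. By UC of $g$ there is $x\in K(\ca)$ with $x\leq u$ and $q\leq g(x)$. Now use that $x$ is closed: $x = \bigwedge\{a\in\A : a\geq x\}$, and one shows $g(x)\leq\bigwedge\{g(a): x\leq a\in\A\} = \bigwedge\{f(a): x\leq a\in\A\}$ by monotonicity of $g$ and the fact that $g$ agrees with $f$ on $\A$. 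The right-hand side is $\leq f^\sigma(u)$ because $x$ is one of the closed elements below $u$ indexing the outer join defining $f^\sigma(u)$. Therefore $q\leq f^\sigma(u)$, as desired.

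I expect the main subtlety to be the bookkeeping around the defining formula for $f^\sigma$ — specifically, repeatedly invoking that for a closed element $x$ the inner meet $\bigwedge\{f(a): x\leq a\in\A\}$ literally equals $f^\sigma(x)$, which requires noting that $x$ itself is the largest closed element below $x$ so the outer join in $f^\sigma(x)$ contributes no strictly larger term. The only genuinely nontrivial ingredient beyond pure definition-chasing is the appeal to complete join-primeness of $q$ to push it inside the outer join, which relies on the perfectness (complete distributivity) of $\B^\delta$; this is available since canonical extensions of BDLs are perfect, as noted on page~\pageref{canext bdl is perfect}. No deep machinery is needed, and the duality argument for the $\pi$/LC statement is purely formal.
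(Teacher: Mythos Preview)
Your proof is correct. Note, however, that the paper does not actually prove this lemma: it is stated with a citation to \cite{GeJo04}, Theorem 2.15, and treated as a preliminary result from the literature. Your argument is essentially the standard one that appears in that reference: verify that $f^\sigma$ is a monotone UC extension by using the complete join-primeness of $q$ to select a single closed witness from the outer join, then show maximality by bounding any other monotone UC extension $g$ on closed elements via monotonicity and the fact that $g$ agrees with $f$ on $\A$. The bookkeeping point you flag --- that for closed $x$ the defining double expression for $f^\sigma(x)$ collapses to the single inner meet indexed by $x$ itself --- is exactly right, since any closed $x'\leq x$ gives a larger index set for the inner meet and hence a smaller value.
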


Similar to the definitions of UC and LC, the following definitions can be given for monotone maps:

\begin{definition}
For any monotone map $f:\mathbb{A}^{\delta}\to\mathbb{B}^{\delta}$,
\begin{enumerate}
\item $f$ is \emph{strongly upper-continuous} (SUC) if for any $u\in\mathbb{A}^{\delta}$ and any $q\in K(\mathbb{B}^{\delta})$,
if $q\leq f(u)$ then $q\leq f(x)$ for some $x\in\kbbas$ s.t.\ $x\leq u$.
\item $f$ is \emph{strongly lower-continuous} (SLC) if for any $u\in\mathbb{A}^{\delta}$ and any $n\in O(\mathbb{B}^{\delta})$, if $n\geq f(u)$ then $n\geq f(y)$ for some $y\in\obbas$ s.t.\ $y\geq u$.
\end{enumerate}
\end{definition}

\noindent Next, we introduce a notion of continuity which is intermediate between Scott continuity and continuity. This notion does not appear in the relevant literature (cf.\ \cite{GeNaVe05}, \cite{GeJo94}), however, it is relevant for clarifying the nature of the small mistake in \cite{GeNaVe05} mentioned above. \begin{definition}
\label{def:wScoC}
If $m\geq 1$, and $f:\Pi_{i = 1}^m\mathbb{A}_i^{\delta}\to\mathbb{B}^{\delta}$ is monotone, then
\begin{enumerate}
\item $f$ is $m$-\emph{Scott continuous} if for any $\overline{u}\in\Pi_i\mathbb{A}_i^{\delta}$, and any $q\in J^{\infty}(\mathbb{B}^{\delta})$, if $q\leq f(\overline{u})$ then $q\leq f(\overline{x})$ for some $\overline{x}\in \Pi_i J^{\infty}(\bbas_i)$, s.t.\ $\overline{x} \leq \overline{u}$;
\item $f$ is \emph{dually $m$-Scott continuous} if for any $\overline{u}\in\Pi_i\mathbb{A}_i^{\delta}$, and any $n\in M^{\infty}(\mathbb{B}^{\delta})$, if $n\geq f(\overline{u})$ then $n\geq f(\overline{y})$ for some $\overline{y}\in \Pi_i M^{\infty}(\bbas_i)$, s.t.\ $\overline{u} \leq \overline{y}$.
\end{enumerate}
Scott and dual Scott continuity correspond to the definitions above for $m = 1$.
\end{definition}

\begin{remark}\label{rem:th 5.6 GNV is wrong}
\begin{enumerate}
\item For any $m\in\mathbb{N}$, the notion of $m$-Scott continuity is strictly stronger than continuity. Indeed, if $m=1$, then $m$-Scott continuity coincides with Scott continuity, which is stronger than continuity. If $m\geq 2$, then it can be easily verified that $$\Pi_{i=1}^{m}J^{\infty}(\A_{i}^{\delta})=\{\overline{j}\mid j_i\in J^{\infty}(\A_{i}^{\delta})\}\subsetneqq J^{\infty}_{\omega}(\Pi_{i=1}^{m}\A_{i}^{\delta}).$$
\item Notice also that, for $m\geq 2$, \[\Pi_i J^{\infty}(\bbas_i)\neq J^{\infty}(\Pi_i \bbas_i)\quad \mbox{ and }\quad \Pi_i M^{\infty}(\bbas_i)\neq M^{\infty}(\Pi_i \bbas_i).\]
Hence, being (dually) $m$-Scott continuous is different from being (dually) Scott continuous in the product. The difference between these notions is essentially the reason why  \cite[Theorem 5.6]{GeNaVe05} (stating that the $\sigma$-extension of any $n$-ary operator is Scott-continuous) is false for $n>1$.

Indeed, consider a binary operator $g: \A\times \A\to \B$ such that $g^{\sigma}(u_1, u_2)\neq\bot$, and let $q\in J^{\infty}(\B)$ such that $q\leq g^{\sigma}(u_1, u_2)$. Since any $\overline{p}\in J^{\infty}(\A^{\delta}\times\A^{\delta})$ is either of the form $(\bot, p_2)$ or of the form $(p_1, \bot)$ with $p_1, p_2\in J^{\infty}(\A^{\delta})$ and $g$ is normal by assumption, $g^{\sigma}(\overline{p})=\bot$, which contradicts the statement of \cite[Theorem 5.6]{GeNaVe05}.
\end{enumerate}
\end{remark}
The following lemma, which refines \cite[Lemma 4.2]{GeJo94} (cf.\ Remark \ref{rem:th 5.6 GNV is wrong}(i)), is the correct version of \cite[Theorem 5.6]{GeNaVe05}:

\begin{lemma}\label{lemma:sigmapi}
If $g : \Pi_{i = 1}^m \mathbb{B}_i\rightarrow \C$ is an operator, then $g^\sigma$ is $m$-Scott continuous.
\end{lemma}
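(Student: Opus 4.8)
The plan is to prove the statement by induction on the coordinate, reducing the $m$-ary case to the unary case, and to handle the unary case directly from the definition of $\sigma$-extension together with compactness. First I would fix $\overline{u}\in\Pi_i\mathbb{B}_i^{\delta}$ and $q\in J^{\infty}(\C^{\delta})$ with $q\leq g^{\sigma}(\overline{u})$, and aim to produce $\overline{x}\in\Pi_i J^{\infty}(\mathbb{B}_i^{\delta})$ with $\overline{x}\leq\overline{u}$ and $q\leq g^{\sigma}(\overline{x})$. Since $g$ is an operator, Lemma~\ref{lem:complete:additive}(iii) tells us that $g^{\sigma}$ is a complete operator, hence in particular completely additive in each coordinate. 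So I would process the coordinates one at a time: working in coordinate $i$, the slice map $x\mapsto g^{\sigma}(\overline{u}[x/u^{(i)}])$ is a complete operator $\mathbb{B}_i^{\delta}\to\C^{\delta}$, so $g^{\sigma}(\overline{u})=\bigvee\{g^{\sigma}(\overline{u}[p/u^{(i)}])\mid p\in J^{\infty}_{\bot}(\mathbb{B}_i^{\delta}),\ p\leq u^{(i)}\}$ because $\mathbb{B}_i^{\delta}$ is join-generated by $J^{\infty}_{\bot}(\mathbb{B}_i^{\delta})$ and $g^{\sigma}$ preserves that join. Since $q\in J^{\infty}(\C^{\delta})$ is completely join-prime, $q\leq g^{\sigma}(\overline{u}[p/u^{(i)}])$ for some single such $p$; and using normality of $g$ (hence $g^{\sigma}(\bot)=g(\bot)=\bot$) we may exclude $p=\bot$ and take $p\in J^{\infty}(\mathbb{B}_i^{\delta})$. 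Iterating this over all $m$ coordinates yields $\overline{x}\in\Pi_i J^{\infty}(\mathbb{B}_i^{\delta})$ with $\overline{x}\leq\overline{u}$ and $q\leq g^{\sigma}(\overline{x})$, which is exactly $m$-Scott continuity.

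The one point that needs care, and which I expect to be the main obstacle, is justifying the join-distribution step $g^{\sigma}(\overline{u}[u^{(i)}/u^{(i)}]) = \bigvee_p g^{\sigma}(\overline{u}[p/u^{(i)}])$: complete additivity only gives preservation of \emph{non-empty} joins, so one must separately ensure the indexing set $\{p\in J^{\infty}(\mathbb{B}_i^{\delta})\mid p\leq u^{(i)}\}$ is non-empty. This is where normality is really used: if $u^{(i)}=\bot$, then that set is empty, but then $g^{\sigma}(\overline{u})=\bot$ by normality of $g$ (so $g^{\sigma}(\bot)=\bot$ in that coordinate), contradicting $q\leq g^{\sigma}(\overline{u})$; hence $u^{(i)}\neq\bot$ and the set of join-irreducibles below it is non-empty since $\mathbb{B}_i^{\delta}$ is perfect. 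So the argument genuinely needs $g$ to be an \emph{operator}, not merely additive — which matches Remark~\ref{rem:th 5.6 GNV is wrong}, where the failure of $m$-Scott continuity for a non-normal (here: merely additive) $g$ is precisely that one could be forced into a $\bot$-coordinate.

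An alternative, cleaner bookkeeping is to appeal directly to the fact (recalled in the excerpt after Definition~\ref{def:can:ext2.3}) that every $j\in J^{\infty}((\mathbb{B}^{\delta})^m)$ is a tuple with $\bot$ in all coordinates but one; this is the wrong shape for our conclusion (we want a tuple of join-irreducibles, one per coordinate), which is exactly the content of Remark~\ref{rem:th 5.6 GNV is wrong}(ii) explaining why the product-Scott-continuity phrasing of \cite[Theorem 5.6]{GeNaVe05} fails. So I would not route through join-irreducibles of the product; the coordinatewise argument above is the right one. Finally, I should double-check the base case $m=1$ is subsumed (it is — the induction above with a single coordinate is precisely the $m=1$ statement), and that monotonicity of $g^{\sigma}$, used implicitly to go from $\overline{x}\leq\overline{u}$ back up, is available from Lemma~\ref{thm:largestUC:DML} or directly from the definition of $\sigma$-extension of a monotone map.
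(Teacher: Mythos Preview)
Your argument is correct and follows essentially the same approach as the paper: both invoke Lemma~\ref{lem:complete:additive}(iii) to get that $g^{\sigma}$ is a complete operator, then use perfectness of each $\mathbb{B}_i^{\delta}$ to write $u_i=\bigvee\mathcal{J}_i$ with $\mathcal{J}_i\subseteq J^{\infty}(\mathbb{B}_i^{\delta})$, and finally use complete join-primeness of $q$ to extract the desired $\overline{x}$. The only cosmetic difference is that the paper distributes all $m$ coordinates at once (writing $g^{\sigma}(\overline{u})=\bigvee\{g^{\sigma}(\overline{x})\mid \overline{x}\in\Pi_i\mathcal{J}_i\}$ directly) rather than iterating coordinate by coordinate, and leaves the $u_i=\bot$ edge case implicit in the ``complete operator'' hypothesis; your explicit handling of that case and your remark on why mere additivity would not suffice are accurate and add clarity.
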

\begin{proof}
Since $g$ is an operator, the map $g^{\sigma}$ is a complete operator (cf.\ Lemma \ref{lem:complete:additive}(iii)). Let $\overline{u}\in\Pi_i\mathbb{B}_i^{\delta}$, and $q\in J^{\infty}(\mathbb{C}^{\delta})$ s.t.\ $q\leq g^{\sigma}(\overline{u})$. As discussed on page \pageref{canext bdl is perfect}, each $\mathbb{B}_i^{\delta}$ is perfect for $1\leq i\leq m$. Therefore, for each $1\leq i\leq m$, we have that $u_i=\bigvee\mathcal{J}_i$ for some $\mathcal{J}_i\subseteq J^{\infty}(\B^{\delta}_i)$. Since $g^{\sigma}$ is a complete operator, $q\leq \bigvee\{g^{\sigma}(\overline{x})\mid \overline{x}\in\Pi_i\mathcal{J}_i\}$. As $q\in J^{\infty}(\mathbb{C}^{\delta})$, there exists some $\overline{x}\in\Pi_i\mathcal{J}_i\subseteq \Pi_iJ^{\infty}(\B^{\delta}_i)$ s.t.\ $\overline{x}\leq\overline{\bigvee\mathcal{J}_i}=\overline{u}$ and $q\leq g^{\sigma}(\overline{x})$. This shows that $g^{\sigma}$ is $m$-Scott continuous, as required.
\end{proof}

In \cite{GeNaVe05}, the following corollary is inferred from \cite[Theorem 5.6]{GeNaVe05}, which as discussed in Remark \ref{rem:th 5.6 GNV is wrong}(ii) is flawed. However, its statement holds as a consequence of Lemma \ref{lemma:sigmapi}. Its proof is straightforward, but it is reported below for the sake of giving an exhaustive exposition.
\begin{cor}[\cite{GeNaVe05}, Corollary 5.7]\label{cor:scott}

If $f_i : \mathbb{A}_i\rightarrow\mathbb{B}_i$ is monotone for any $1\leq i\leq m$ and $g: \Pi_{i = 1}^m \mathbb{B}_i\rightarrow\C$ is an operator, then $g^\sigma (f_1^\sigma, \ldots, f_m^\sigma) \leq (g(f_1, \ldots, f_m))^\sigma$.

\end{cor}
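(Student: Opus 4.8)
The plan is to derive the inequality from Lemma~\ref{lemma:sigmapi} together with the composition inequality of Theorem~\ref{thm:ribiero}, mimicking the standard argument for the normal case but taking care that the $m$-Scott continuity of $g^\sigma$ is exactly the extra ingredient needed. First I would apply Theorem~\ref{thm:ribiero} (Ribeiro's inequality) to the composite map $h := g(f_1,\ldots,f_m):\Pi_i\A_i\to\C$: writing $h$ as $g\circ\langle f_1,\ldots,f_m\rangle$, one gets $(g(f_1,\ldots,f_m))^\sigma \geq g^\sigma\circ(\langle f_1,\ldots,f_m\rangle)^\sigma$. So it suffices to show $g^\sigma(f_1^\sigma,\ldots,f_m^\sigma)\leq g^\sigma\circ(\langle f_1,\ldots,f_m\rangle)^\sigma$, i.e.\ that the tuple map $\langle f_1^\sigma,\ldots,f_m^\sigma\rangle$ is pointwise below the $\sigma$-extension of the tuple map $\langle f_1,\ldots,f_m\rangle$; equivalently, since $g^\sigma$ is monotone, it is enough to compare the two tuple maps coordinatewise. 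But here I expect a subtlety: in general $\langle f_1^\sigma,\ldots,f_m^\sigma\rangle$ and $(\langle f_1,\ldots,f_m\rangle)^\sigma$ need not coincide, so the cleaner route is to argue directly using $m$-Scott continuity rather than to first massage the composite.

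So the main line I would actually carry out is the following. Fix $\overline{u}\in\Pi_i\A_i^\delta$ and $q\in J^\infty(\C^\delta)$ with $q\leq g^\sigma(f_1^\sigma(u_1),\ldots,f_m^\sigma(u_m))$. Since $g$ is an operator, Lemma~\ref{lemma:sigmapi} applies to $g^\sigma$, so there is a tuple $\overline{x}\in\Pi_i J^\infty(\B_i^\delta)$ with $\overline{x}\leq (f_1^\sigma(u_1),\ldots,f_m^\sigma(u_m))$ and $q\leq g^\sigma(\overline{x})$. For each $i$ we then have $x_i\leq f_i^\sigma(u_i)$ with $x_i\in J^\infty(\B_i^\delta)\subseteq K(\B_i^\delta)$; by the upper-continuity of $f_i^\sigma$ (Lemma~\ref{thm:largestUC:DML}(i), which says $f_i^\sigma$ is UC), there is $k_i\in K(\A_i^\delta)$ with $k_i\leq u_i$ and $x_i\leq f_i(k_i')$ for suitable $k_i'\in\A_i$ above $k_i$ — more precisely, unfolding the definition of $f_i^\sigma$, since $x_i$ is completely join-prime and $f_i^\sigma(u_i)=\bigvee\{\bigwedge\{f_i(a):k\leq a\in\A_i\}: u_i\geq k\in K(\A_i^\delta)\}$, there is $k_i\in K(\A_i^\delta)$, $k_i\leq u_i$, with $x_i\leq \bigwedge\{f_i(a):k_i\leq a\in\A_i\}$. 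Now compute $g(f_1,\ldots,f_m)^\sigma(\overline{u})$ from the definition of the $\sigma$-extension of the composite: it is a join over closed $\overline{k}\leq\overline{u}$ of meets of $g(f_1(a_1),\ldots,f_m(a_m))$ over $\overline{a}\geq\overline{k}$ in $\Pi_i\A_i$. Taking the particular $\overline{k}=(k_1,\ldots,k_m)\leq\overline{u}$, and using that $g$ is monotone together with $x_i\leq f_i(a_i)$ for all $a_i\geq k_i$, we get $g(\overline{x})\leq g(f_1(a_1),\ldots,f_m(a_m))$ for all such $\overline{a}$, hence $g(\overline{x})\leq\bigwedge\{g(f_1(a_1),\ldots,f_m(a_m)):\overline{a}\geq\overline{k}\}\leq (g(f_1,\ldots,f_m))^\sigma(\overline{u})$. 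Finally, $\overline{x}\in\Pi_i J^\infty(\B_i^\delta)\subseteq \Pi_i K(\B_i^\delta)$ is closed, and $g^\sigma$ restricted to closed elements agrees with the meet-of-$g$-values definition, so $q\leq g^\sigma(\overline{x})\leq g(\overline{x})$ — wait, that is not right in general since $g^\sigma(\overline{x})$ is a meet of $g$-values over $\overline{a}\geq\overline{x}$; but $\overline{x}$ itself may not be in $\Pi_i\A_i$. The correct statement is $g^\sigma(\overline{x})=\bigwedge\{g(\overline{a}):\overline{a}\geq\overline{x},\ \overline{a}\in\Pi_i\B_i\}$, and for each such $\overline{a}$ we have already shown (by the same monotonicity argument, now comparing to the $a_i$'s chosen above $k_i$ which lie above $x_i$) how to bound things; combining, $q\leq g^\sigma(\overline{x})\leq (g(f_1,\ldots,f_m))^\sigma(\overline{u})$, and since $q$ was an arbitrary completely join-prime element below $g^\sigma(f_1^\sigma,\ldots,f_m^\sigma)(\overline{u})$ and $\C^\delta$ is join-generated by $J^\infty(\C^\delta)$, we conclude $g^\sigma(f_1^\sigma,\ldots,f_m^\sigma)\leq (g(f_1,\ldots,f_m))^\sigma$.

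I expect the main obstacle to be bookkeeping the quantifiers in the nested join-of-meets definitions of the various $\sigma$-extensions correctly — in particular, matching up the closed approximants $k_i\leq u_i$ coming from the upper-continuity of $f_i^\sigma$ with a single closed approximant $\overline{k}\leq\overline{u}$ usable inside the definition of $(g(f_1,\ldots,f_m))^\sigma(\overline{u})$, and making sure the monotonicity estimate $g(\overline{x})\leq g(f_1(a_1),\ldots,f_m(a_m))$ holds uniformly over all $\overline{a}\geq\overline{k}$. The role of $g$ being an \emph{operator} (rather than merely additive) enters twice: once to invoke Lemma~\ref{lemma:sigmapi} (which needs $g^\sigma$ to be a complete operator, via Lemma~\ref{lem:complete:additive}(iii)), and implicitly in handling $\bot$-coordinates, though in the present argument the completely-join-prime witnesses $x_i$ are never $\bot$, so normality is used only through Lemma~\ref{lemma:sigmapi}. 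Everything else is the routine interplay between compactness, denseness, and monotonicity already exploited in Lemma~\ref{lem:complete:additive}.
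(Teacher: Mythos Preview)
Your core argument is the same as the paper's: use the $m$-Scott continuity of $g^\sigma$ (Lemma~\ref{lemma:sigmapi}) to pass from $q\leq g^\sigma(\overline{f^\sigma}(\overline{u}))$ to some $\overline{x}\in\Pi_i J^\infty(\B_i^\delta)$ with $\overline{x}\leq\overline{f^\sigma}(\overline{u})$ and $q\leq g^\sigma(\overline{x})$, then use the upper continuity of each $f_i^\sigma$ to find closed $k_i\leq u_i$ with $x_i\leq f_i^\sigma(k_i)$. The paper then finishes in one line: by monotonicity of $g^\sigma$, $q\leq g^\sigma(\overline{x})\leq g^\sigma(\overline{f^\sigma}(\overline{k}))$, which is exactly the UC condition for the composite $g^\sigma(\overline{f^\sigma})$; since $(g\overline{f})^\sigma$ is the \emph{largest} monotone UC extension of $g\overline{f}$ (Lemma~\ref{thm:largestUC:DML}), the desired inequality follows immediately.

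Where you diverge is only in the wrap-up. Instead of invoking the ``largest UC extension'' characterization, you unpack the join-of-meets definition of $(g\overline{f})^\sigma(\overline{u})$ by hand and try to push $q$ below it directly. This is valid in principle, but it led you into the tangle around $g(\overline{x})$ versus $g^\sigma(\overline{x})$ and the vague ``by the same monotonicity argument'' at the end. The clean fix to your direct route is simply: from $x_i\leq f_i^\sigma(k_i)\leq f_i(a_i)$ for every $a_i\in\A_i$ with $a_i\geq k_i$, monotonicity of $g^\sigma$ gives $g^\sigma(\overline{x})\leq g^\sigma(\overline{f(a)})=g(\overline{f(a)})$ for all such $\overline{a}$, hence $q\leq g^\sigma(\overline{x})\leq(g\overline{f})^\sigma(\overline{k})\leq(g\overline{f})^\sigma(\overline{u})$. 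But note that this is precisely re-proving, inside this particular instance, that a UC extension lies below the $\sigma$-extension; the paper's appeal to Lemma~\ref{thm:largestUC:DML} packages that step once and for all and avoids the bookkeeping you flagged as the main obstacle. Your opening detour through Theorem~\ref{thm:ribiero} is unnecessary and, as you observed, would require comparing $(\langle f_1,\ldots,f_m\rangle)^\sigma$ with $\langle f_1^\sigma,\ldots,f_m^\sigma\rangle$, which is not needed here.
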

\begin{proof} Let $\overline{f}: \Pi\A_i\to \Pi\B_i$ be the product map of the maps $f_i$, and let $\overline{f^\sigma}$ be the product map of their $\sigma$-extensions.
Since $(g(\overline{f}))^\sigma$ is the greatest UC extension of the monotone map $g(\overline{f})$ (cf.\ Lemma \ref{thm:largestUC:DML}), it is enough to show that $g^\sigma (\overline{f^\sigma})$ is UC. Fix $\overline{u}\in \Pi\A_i$ and $q\in J^{\infty}(\C^{\delta})$ such that $q\leq g^\sigma (\overline{f^\sigma}(\overline{u}))$. By Lemma \ref{lemma:sigmapi}, this implies that $q\leq g^\sigma (\overline{x})$ for some $\overline{x}\in \Pi J^\infty(\B_i^\delta)$ such that $\overline{x}\leq \overline{f^\sigma}(\overline{u})$. Because (each component of) $\overline{f^\sigma}$ is UC, this implies that $\overline{x}\leq \overline{f^\sigma}(\overline{y})$ for some $\overline{y}\in \Pi K(\A_i^\delta) = K(\Pi\A_i^\delta)$ such that $\overline{y}\leq \overline{u}$. Finally, by monotonicity, $q\leq g^\sigma (\overline{x})\leq g^\sigma (\overline{f^\sigma}(\overline{y}))$, as required.
\end{proof}

\begin{remark}\label{rem:meet preserving not dual op}
In \cite{GeNaVe05}, in the discussion immediately above Theorem 5.8, it is written, about {\em meet-preserving} maps:
\begin{quote} Note that in the case of
unary operations, this [i.e.\ being meet preserving] is the same concept as being a dual operator; for operations of higher
rank, however, preserving meets is a far stronger condition.
\end{quote}
Actually, for non unary operations, being meet-preserving $($resp.\ join-preserving$)$ and being a dual operator $($resp.\ an operator$)$ are {\em incomparable} notions. For instance, in any lattice $\mathbb{L}$, the binary meet operator $\wedge:\mathbb{L}\times\mathbb{L}\to\mathbb{L}$, mapping any $(a, b)\in \mathbb{L}\times\mathbb{L}$ to the greatest lower bound of $\{a, b\}$, is by definition the right adjoint of the diagonal map $\Delta: \mathbb{L}\to\mathbb{L}\times\mathbb{L}$, mapping any $a$ to $(a, a)$. Hence, $\wedge:\mathbb{L}\times\mathbb{L}\to\mathbb{L}$ preserves existing meets (and in particular all finite ones) in $\mathbb{L}\times\mathbb{L}$. However, the operator $\wedge$ is not a dual operator: indeed, for any $a\neq \top$ we have $a\wedge \top = a \neq \top$. Likewise, $\vee:\mathbb{L}\times\mathbb{L}\to\mathbb{L}$ is left adjoint to $\Delta$ and hence is join-preserving, but not an operator. However, it is not difficult to show that any non-unary meet-preserving $($resp.\ join-preserving$)$ map is {\em multiplicative} $($resp.\ {\em additive}$)$. This is essentially because, for all lattices $\A_i$, $1\leq i\leq n$, any $\overline{a}\in \Pi\A_i$ and any $S\subseteq \A_j$ for a given $1\leq j\leq n$, the equality \[\overline{a}[(\bigwedge S)/a_j] = \bigwedge_{\Pi\A_i} \{\overline{a}[s/a_j]\mid s\in S\}\] holds if $S\neq \varnothing$ $($but it is not guaranteed to hold if $S = \varnothing$, as the example of $\wedge$ shows$)$.

As will be discussed in Section \ref{sec:weaker sufficient conditions}, the observation that non-unary meet-preserving $($resp.\ join-preserving$)$ maps are multiplicative $($resp.\ additive$)$ will be important for remedying to another, slightly more serious issue in \cite{GeNaVe05}, the solution of which is at the base of the generalization of Section \ref{sec:5}.
\end{remark}

\begin{lemma}[\cite{GeNaVe05}, Theorem 5.8]\label{lem:meet-pres_suc}
If $f:\A\rightarrow\B$ is meet-preserving, then $f^\sigma$ is SUC.
\end{lemma}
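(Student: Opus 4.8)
The plan is to show that $f^\sigma$ is SUC directly from the definitions, exploiting the fact that $f$ meet-preserving means $f^\sigma$ agrees with the "meet-of-$f$-values" formula on closed elements and that $f^\sigma$ is itself completely meet-preserving. First I would recall from Lemma~\ref{lem:meet-pres_suc}'s hypothesis that since $f$ is meet-preserving it is in particular monotone, so $f^\sigma$ is well-defined and is by Lemma~\ref{thm:largestUC:DML}(i) the largest monotone UC extension of $f$; but UC only gives us the conclusion for $q\in J^\infty(\B^\delta)$, and we need it for arbitrary closed $q\in K(\B^\delta)$. So the real content is upgrading from completely-join-prime targets to arbitrary closed targets.

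The key steps, in order, are as follows. Step one: observe that a meet-preserving map preserves $\top$ (being the empty meet), hence $f^\sigma(\top)=f(\top)$, and more importantly that on closed elements $x\in K(\ca)$ the inner meet $\bigwedge\{f(a)\mid x\le a\in\A\}$ equals $f^\sigma(x)$; combined with $f$ preserving all (including empty) finite meets, one checks that $f^\sigma$ restricted to $K(\ca)$ is a down-directed-meet-preserving / closed-element-respecting map, and in fact $f^\sigma$ on all of $\ca$ sends $u$ to $\bigvee\{f^\sigma(x)\mid u\ge x\in K(\ca)\}$. Step two: take $u\in\ca$ and $q\in K(\B^\delta)$ with $q\le f^\sigma(u)$; write $q=\bigwedge T$ with $T\subseteq\B$, and write $f^\sigma(u)=\bigvee\{f^\sigma(x)\mid u\ge x\in K(\ca)\}=\bigvee\{\bigwedge\{f(a)\mid x\le a\in\A\}\mid u\ge x\in K(\ca)\}$. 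Step three: the crucial move is to use meet-preservation of $f$ to rewrite each $\bigwedge\{f(a)\mid x\le a\in\A\}$ and then use a compactness argument: from $\bigwedge T \le \bigvee\{\,\cdots\,\}$, push through denseness/compactness of the canonical extension to find a single closed $x\le u$ with $q\le f^\sigma(x)$, i.e.\ $q\le\bigwedge\{f(a)\mid x\le a\}$. Here I expect to need the fact (used repeatedly in the paper, e.g.\ in the proof of Lemma~\ref{lem:complete:additive}) that closed elements are closed under finite joins and that meets of $\A$-elements below a fixed closed element are down-directed, so that the relevant suprema are directed and compactness applies cleanly.

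The main obstacle I anticipate is precisely this compactness step: turning "$q\le f^\sigma(u) = \bigvee_{x} f^\sigma(x)$" with $q$ merely closed (not completely join-prime) into "$q\le f^\sigma(x)$ for a single $x$". For completely join-prime $q$ this is immediate, but for closed $q$ one must argue that the family $\{f^\sigma(x)\mid u\ge x\in K(\ca)\}$ is up-directed — which follows because $K(\ca)$ is closed under finite joins and $f^\sigma$ is monotone, so $f^\sigma(x_1)\vee f^\sigma(x_2)\le f^\sigma(x_1\vee x_2)$ with $x_1\vee x_2$ still closed and below $u$ — and then that a closed element below an up-directed join of elements that are themselves meets-of-opens (note $f(a)\in\A$, so each $f^\sigma(x)$ is closed, being a meet of elements of $\B$) lies below one of them, which is exactly the compactness property of $\ca=\B^\delta$ applied to $q=\bigwedge T$ and the directed join rewritten suitably. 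Once a single closed $x\le u$ with $q\le f^\sigma(x)$ is secured, we are done: $x$ witnesses SUC. I would also double-check the degenerate case $u=\top$ or $q=\top$ separately, since meet-preservation (as opposed to mere multiplicativity) is exactly what makes these cases behave, and this is presumably the point of stating the lemma for meet-preserving rather than merely multiplicative $f$ — a subtlety flagged in Remark~\ref{rem:meet preserving not dual op}.
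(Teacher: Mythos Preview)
The paper does not give its own proof of this lemma; it simply cites \cite[Theorem 5.8]{GeNaVe05}. Your proposal, however, contains a genuine gap at the crucial step. You claim that ``a closed element below an up-directed join of closed elements lies below one of them,'' calling this ``exactly the compactness property of $\B^\delta$.'' This is false. Compactness yields that conclusion only when the directed family consists of \emph{open} elements (equivalently, the join is open), not closed ones. For a concrete counterexample, let $\B$ be the clopen algebra of Cantor space, so $\B^\delta\cong\mathcal{P}(2^\omega)$ and closed elements are topologically closed subsets. Take $q=\{0^n1^\omega:n\ge 0\}\cup\{0^\omega\}$ and $D_n=\{0^k1^\omega:k\le n\}\cup\{0^\omega\}$: both $q$ and each $D_n$ are closed, the $D_n$ are increasing, $\bigvee_n D_n=q$, yet $q\not\le D_n$ for any $n$. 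So the up-directedness of $\{f^\sigma(x):x\le u,\ x\in K(\ca)\}$ together with each $f^\sigma(x)$ being closed is not enough.

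The argument in \cite{GeNaVe05} avoids this by constructing the witness directly rather than extracting it from a directed join. Set $x_0:=\bigwedge\{a\in\A:q\le f(a)\}\in K(\ca)$. For $q\le f^\sigma(x_0)$: if $a\in\A$ with $a\ge x_0$, compactness gives $a\ge a_1\wedge\cdots\wedge a_n$ with each $q\le f(a_i)$, and then \emph{meet-preservation} of $f$ yields $f(a)\ge\bigwedge_i f(a_i)\ge q$. For $x_0\le u$: since $f$ is (p-)multiplicative it is smooth, so $f^\sigma=f^\pi$; for any open $o\ge u$ one has $q\le f^\pi(o)=\bigvee\{f(a):a\le o,\ a\in\A\}$, an open element, and now genuine compactness gives some $a\le o$ with $q\le f(a)$, whence $x_0\le a\le o$. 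Denseness then gives $x_0\le u$. Notice that meet-preservation (as opposed to mere multiplicativity) is used precisely in the first of these two verifications, which is where the finite meet $\bigwedge_i f(a_i)$ could be empty.
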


\begin{cor}[\cite{GeNaVe05}, Corollary 5.9]\label{cor:suc:GNV}

If $f:\A\rightarrow\B$ is meet-preserving and $g:\mathbb{B}\rightarrow\C$ is monotone, then
$g^\sigma f^\sigma\leq(gf )^\sigma$.
\end{cor}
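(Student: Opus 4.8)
The plan is to deduce Corollary \ref{cor:suc:GNV} from Lemma \ref{lem:meet-pres_suc} together with the maximality characterization of $\sigma$-extensions in Lemma \ref{thm:largestUC:DML}(i), following the same pattern used to prove Corollary \ref{cor:scott} and Corollary \ref{cor:suc:GNV}'s analogue for operators. Since $(gf)^\sigma$ is the \emph{largest} monotone UC extension of the monotone map $gf:\A\to\C$, and $g^\sigma f^\sigma$ is a monotone extension of $gf$ (because $g^\sigma$ extends $g$, $f^\sigma$ extends $f$, and both are monotone, so their composite restricts to $gf$ on $\A$), it suffices to show that $g^\sigma f^\sigma$ is UC.

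So first I would fix $u\in\A^\delta$ and $q\in J^\infty(\C^\delta)$ with $q\leq g^\sigma(f^\sigma(u))$, and aim to produce $x\in K(\A^\delta)$ with $x\leq u$ and $q\leq g^\sigma(f^\sigma(x))$. The key point is that $f$ is meet-preserving, hence by Lemma \ref{lem:meet-pres_suc} the map $f^\sigma$ is SUC, which is exactly the strengthening of UC that allows the intermediate witness to be an arbitrary closed element rather than merely a completely join-irreducible one. Concretely: since $g^\sigma$ is UC (it is the $\sigma$-extension of the monotone map $g$, cf.\ Lemma \ref{thm:largestUC:DML}(i)), from $q\leq g^\sigma(f^\sigma(u))$ we obtain some $p\in K(\B^\delta)$ with $p\leq f^\sigma(u)$ and $q\leq g^\sigma(p)$. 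Now apply the SUC property of $f^\sigma$ to the closed element $p$: from $p\leq f^\sigma(u)$ we get some $x\in K(\A^\delta)$ with $x\leq u$ and $p\leq f^\sigma(x)$. By monotonicity of $g^\sigma$, $q\leq g^\sigma(p)\leq g^\sigma(f^\sigma(x))$, which is what we needed.

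Thus $g^\sigma f^\sigma$ is a monotone UC extension of $gf$, and by the maximality in Lemma \ref{thm:largestUC:DML}(i) we conclude $g^\sigma f^\sigma\leq (gf)^\sigma$.

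The main subtlety — the reason this argument needs SUC rather than plain UC — is precisely that the composition argument forces us to pass through an intermediate element of $\B^\delta$ which is obtained by approximating with respect to $J^\infty(\C^\delta)$ and hence is only guaranteed to be closed, not completely join-irreducible; so the plain UC property of $f^\sigma$ (which only handles completely join-irreducible targets) would not suffice, and it is the meet-preservation of $f$ that upgrades $f^\sigma$ to SUC and makes the chain go through. No genuinely hard estimate is involved; the content is entirely in correctly invoking Lemma \ref{lem:meet-pres_suc} and the characterization of $(gf)^\sigma$ as the greatest monotone UC extension.
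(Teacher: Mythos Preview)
Your proof is correct and follows exactly the approach the paper has in mind. The paper explicitly describes (in Subsection \ref{subsec:3:1:2}) that Corollary \ref{cor:suc:GNV} is proved via the second of the two options, namely by combining the UC of $g^\sigma$ with the SUC of $f^\sigma$ obtained from Lemma \ref{lem:meet-pres_suc}, and then appealing to the maximality of $(gf)^\sigma$ among monotone UC extensions (Lemma \ref{thm:largestUC:DML}); this is precisely what you do.
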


Corollaries \ref{cor:scott} and \ref{cor:suc:GNV} above constitute the core of the J\'onsson-style canonicity argument of \cite{GeNaVe05}. We will discuss them in Section \ref{sec:jonsson strategy}.

\subsection{Distributive lattice expansions}\label{subsec:dist:latt:expansions}

Distributive modal logic has been introduced in \cite{GeNaVe05}, and further studied in \cite{ConPal12}.
Algebraically, distributive modal logic is interpreted on certain BDLs expanded with the unary operations $\Diamond, {\lhd}, \Box, {\rhd}$ such that $\Diamond$ and ${\lhd}$ are operator-type, in that they respectively preserve joins and reverse meets, and $\Box$ and ${\rhd}$ are dual operator-type, in that they respectively preserve meets and reverse joins. In what follows, we report on a more compact variant of the DML setting, in which the operator-type connective $f$ plays the role of $\Diamond, {\lhd}$, and the dual operator-type connective $g$ plays the role of $\Box, {\rhd}$.

For any order-type $\eta\in \{1,\partial\}^2$, and given a countable set $\mathsf{Prop}$ of proposition variables, the language of DLE$_{\eta}$ is defined recursively as follows:
$$\varphi::=p \in \mathsf{Prop}\mid \top\mid\ \perp\ \mid\varphi\land\varphi\mid\varphi\lor\varphi\mid f(\varphi)\mid g(\varphi).$$
We will not expand on the axiomatization of DLE$_{\eta}$ and refer the reader to \cite{GeNaVe05} for a complete treatment. Here we only assume that, for any $\eta$, the axiomatics of DLE$_{\eta}$ requires that the algebraic interpretation of $f$ and $g$ is such that the product map $(f^\A, g^\A): \A^\eta\rightarrow \A\times \A^\partial$ preserves finite joins in each coordinate. For instance, if $\eta = (1, \partial)$, then $f$ is $\Diamond$-type and $g$ is $\Box$-type; if $\eta = (\partial, 1)$, then $f$ is ${\lhd}$-type and $g$ is ${\rhd}$-type, and so on.

In what follows, we will abuse terminology and refer to the algebras naturally associated with the logic DLE$_\eta$ as DLEs. We will further disambiguate only when necessary. Moreover, for the sake of not overloading notation, we will omit the subscript $\eta$ unless strictly necessary. Since each unary operation $f$ and $g$ in a DLE is either join- (resp.\ meet-)preserving or reversing, each of them is smooth (cf.\ \cite{GeNaVe05}, discussion above Definition 2.19). Therefore, the canonical extension of a DLE can be defined as follows (cf.\ \cite{GeNaVe05}, Definitions 2.19 and 2.20):

\begin{definition}
For any DLE $\mathbb{A}=(A, f, g$), the canonical extension $\mathbb{A}^{\delta}$ of $\bba$ is defined as $\mathbb{A}^{\delta}=(A, f^{\sigma}, g^{\sigma}) = (A, f^{\pi}, g^{\pi})$.
\end{definition}

Clearly, every DLE$_{\eta}$-term $\alpha$ is interpreted in $\ca$ as the appropriate composition of operations of $\ca$ that interpret the logical connectives, and we denote this term function as $\alpha^{\ca}$. On the other hand, the term function $\alpha^{\A}$ can be extended to $\ca$ via its $\sigma$- or $\pi$-extension. The following definition (cf.\ \cite[Definition 5.2]{GeNaVe05}) compare these functions:

\begin{definition}\label{def:stable}
 A DLE$_{\eta}$-term $\alpha$ is, respectively, \emph{$\lambda$-stable}, \emph{$\lambda$-expanding}, and \emph{$\lambda$-contracting} $(\lambda\in\{\sigma, \pi\})$ if the following respectively hold for any DLE $\A$: \[\alpha^{\A^\delta} = (\alpha^\A)^\lambda,\quad\quad \alpha^{\A^\delta} \leq (\alpha ^\A)^\lambda,\quad\quad \alpha^{\A^\delta}\geq(\alpha^\A)^\lambda.\]
\end{definition}

\subsection{Sahlqvist DLE$_{\eta}$- and DML-inequalities}\label{subsec:DLE:eta}

Any DLE$_{\eta}$-term $s$ can be associated with its positive (resp.\ negative) \emph{signed generation tree}, by labelling the root of the generation tree of $s$ with $+$ (resp.\ $-$) and then propagating the label along the tree according to the polarity of each node: the label is kept the same along positive polarities and is switched along negative ones.
For any term $s(p_1,\ldots p_n)$, any order type $\epsilon$ over $n$, and any $1 \leq i \leq n$, an \emph{$\epsilon$-critical node} in a signed generation tree of $s$ is a leaf node $+p_i$ with $\epsilon_i = 1$ or $-p_i$ with $\epsilon_i = \partial$. An $\epsilon$-{\em critical branch} in the tree is a branch ending in an $\epsilon$-critical node.

For every term $s(p_1,\ldots p_n)$ and every order type $\epsilon$, we say that $\ast s$ ($\ast\in \{+, -\}$) is \emph{$\epsilon$-uniform}, or that $\ast s$ {\em agrees with} $\epsilon$, if every leaf in the signed generation tree of $\ast s$ is $\epsilon$-critical. A signed term $\ast s$ is \emph{uniform} if it is $\epsilon$-uniform for some order-type $\epsilon$.
\begin{table}[\here]
\begin{center}
\begin{tabular}{| c | c |}
\hline
 Skeleton &PIA\\
\hline
$\Delta$-adjoints & SRA \\

\begin{tabular}{ c c c c c}
$+$ &$\vee$ &$\wedge$ &$\phantom{\cdot}$ & \\
$-$ &$\wedge$ &$\vee$\\
\hline
\end{tabular}
&
\begin{tabular}{c c c c}
$+$ &$g$ &$\wedge$ \\
$-$ &$f$ &$\vee$ \\
\hline
\end{tabular}
\\
SLR &SRR\\

\begin{tabular}{c c c c }
$+$ & $\wedge$ & $f$ \\
$-$ & $\vee$ &$g$ \\
\end{tabular}
&
\begin{tabular}{c c c c}
$+$ &$\vee$ &$\phantom{\rightarrow}$& $\phantom{\circ}$\\
$-$ & $\wedge$ \\
\end{tabular}
\\
\hline
\end{tabular}
\end{center}
\caption{Classification of nodes for the signature of DLE$_\eta$}\label{Table:classification of DLEeta}
\end{table}

\begin{definition}[cf.\ \cite{CoGhPa14}, Definition 36.4]\label{Excellent:Branch:Def}
Nodes in signed generation trees are classified according to Table \ref{Table:classification of DLEeta}. For $*\in\{+,-\}$, a branch in a signed generation tree $\ast s$ is:
\begin{itemize}
\item a \emph{good branch} if it is the concatenation of two paths $P_1$ and $P_2$, one of which may possibly be of length $0$, such that $P_1$ is a path from the leaf consisting (apart from variable nodes) only of PIA-nodes, and $P_2$ consists (apart from variable nodes) only of Skeleton-nodes.

\item an \emph{excellent branch} if it is good, and moreover $P_1$ consists (apart from variable nodes) only of SRA-nodes.
\end{itemize}
\end{definition}
\begin{definition}\label{Sahlqvist:Ineq:Def}
For any $n$-order type $\epsilon$, the signed generation tree $\ast s$ of a term $s(p_1,\ldots p_n)$ is \emph{$\epsilon$-Sahlqvist} if for all $1 \leq i \leq n$, every $\epsilon$-critical branch with leaf labelled $p_i$ is excellent.

An inequality $s \leq t$ is \emph{$\epsilon$-Sahlqvist} if the trees $+s$ and $-t$ are both $\epsilon$-Sahlqvist. An inequality $s \leq t$ is \emph{Sahlqvist} if it is $\epsilon$-Sahlqvist for some $\epsilon$.
\end{definition}

\begin{remark}
Since the signature of DLE$_{\eta}$ is a reduct of DML, Definition \ref{Sahlqvist:Ineq:Def} could have been given by suitably restricting the corresponding definitions in \cite{GeNaVe05} and \cite{ConPal12}. However, the definition above rather follows \cite{CoGhPa14}. As discussed in \cite{CoGhPa14}, Sahlqvist inequalities are defined purely in terms of the order-theoretic properties of the algebraic interpretations of the logical connectives. 
 These order-theoretic properties remain essentially unchanged when applied to different signatures and different logics. Hence in particular, the definition of Sahlqvist DML-inequalities given in \cite{GeNaVe05} can be equivalently given by applying Definition \ref{Sahlqvist:Ineq:Def} verbatim on the classification of nodes reported in Table \ref{Table:classification of DML} below.
\end{remark}

\begin{table}[\here]
\begin{center}
\begin{tabular}{| c | c |}
\hline
 Skeleton &PIA\\
\hline
$\Delta$-adjoints & SRA \\

\begin{tabular}{ c c c c c}
$+$ &$\vee$ &$\wedge$ &$\phantom{\cdot}$ & \\
$-$ &$\wedge$ &$\vee$\\
\hline
\end{tabular}
&
\begin{tabular}{c c c c}
$+$ &$\Box$ & ${\rhd}$ & $\wedge$ \\
$-$ &$\Diamond$ & ${\lhd}$ &$\vee$ \\
\hline
\end{tabular}
\\
SLR &SRR\\

\begin{tabular}{c c c c }
$+$ & $\wedge$ & $\Diamond$ & ${\lhd}$ \\
$-$ & $\vee$ &$\Box$ & ${\rhd}$ \\
\end{tabular}
&
\begin{tabular}{c c c c}
$+$ &$\vee$ &$\phantom{\rightarrow}$& $\phantom{\circ}$\\
$-$ & $\wedge$ \\
\end{tabular}
\\
\hline
\end{tabular}
\end{center}
\caption{Classification of nodes for the signature of DML}\label{Table:classification of DML}
\end{table}

\begin{remark}\label{rem:pia}{\em
Let us expand on the criteria motivating the classification of nodes in the Table \ref{Join:and:Meet:Friendly:Table:HAR:DLR}. Firstly, we are using two types of names: ``meaningful'' names such as {\em syntactically right residual}, and more ``cryptic'' names, such as Skeleton and PIA. Hence, the resulting classification has two layers, one of which accounts for what the connectives are order theoretically, and the other for what they are for, in the context of ALBA. Specifically, the ``meaningful'' names explicitly refer to intrinsic order-theoretic properties of the interpretation of the logical connectives. For example, the SAC connectives will be interpreted as operations which are additive in each coordinate (modulo order-type). The more ``cryptic'' names classify connectives in terms of the kind of rules which will be applied to them by ALBA. The idea is that only approximation/splitting rules are to be applied to skeleton nodes, with the aim of   surfacing the PIA subterms containing the $\epsilon$-critical occurrences of propositional variables, and only residuation/adjunction rules are to be applied to PIA nodes, with the aim of computing the ``minimal valuations''. The fact that these rules can be soundly applied is guaranteed by the intrinsic order-theoretic properties. Sometimes, the intrinsic order-theoretic properties of a connective are such that other rules are also soundly applicable to it. However, the Skeleton/PIA classification indicates that this is not required in order to reach Ackermann shape. The term Skeleton formula comes from \cite{BenthemBH12}. The acronym PIA was introduced by van Benthem in \cite{Benthem05}. The analysis of PIA-formulas conducted in \cite{Benthem05,BenthemBH12}  can be summarized in the slogan ``PIA formulas provide minimal valuations'', which  is precisely the role of those terms which we call PIA-terms here. Again, this choice of terminology is not based on the original syntactic description of van Benthem, but rather on which rules are best being applied to them in order to be guaranteed success  of the execution of the algorithm.  In this respect, the crucial property possessed by PIA-formulas in the setting of normal modal logic is the \emph{intersection property}, isolated by van Benthem in \cite{Benthem05}, which is enjoyed by those formulas which, seen as operations on the complex algebra of a frame, preserve  arbitrary intersections of subsets. The order-theoretic import of this property is clear: a formula has the intersection property iff the term function associated with it is completely meet-preserving. In the complete lattice setting, this is equivalent to it being a right adjoint; this is exactly the order-theoretic property guaranteeing the soundness of adjunction/residuation rules in the setting of normal modal logic.}
\end{remark}
\section{J\'onsson's strategy for canonicity}
\label{sec:jonsson strategy}
The present section is aimed at illustrating J\'onsson's methodology. We first sketch the proof in \cite{GeNaVe05} of the canonicity of Sahlqvist inequalities, and then discuss J\'onsson's strategy.

\subsection{J\'onsson-style canonicity for Sahlqvist inequalities}

\begin{theorem}[cf.\ \cite{GeNaVe05}, Theorem 5.1]\label{thm:canonicity:Sahlqvist}
For any Sahlqvist DML-inequality $\alpha\leq\beta$ and any DMA $\mathbb{A}$, \[\mathbb{A}\models \alpha\leq\beta\Rightarrow\mathbb{A}^\delta\models \alpha\leq\beta.\]
\end{theorem}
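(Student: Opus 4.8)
The plan is to follow J\'onsson's strategy, which factors the canonicity statement through a comparison between the term function $\alpha^{\A^\delta}$ (the connective-wise interpretation of $\alpha$ in the canonical extension) and the $\sigma$- or $\pi$-extension $(\alpha^{\A})^\lambda$ of the term function $\alpha^{\A}$. The key observation is that an inequality $\alpha \leq \beta$ valid on $\A$ remains valid on $\A^\delta$ provided we can show $\alpha^{\A^\delta} \leq (\alpha^{\A})^\sigma$ and $(\beta^{\A})^\pi \leq \beta^{\A^\delta}$, since $(\alpha^{\A})^\sigma \leq (\beta^{\A})^\pi$ follows from $\A \models \alpha \leq \beta$ together with the order-preservation of $(-)^\sigma, (-)^\pi$ and the fact that $f^\sigma \leq f^\pi$. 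So the whole argument reduces to proving, using the Sahlqvist shape of $+\alpha$ and $-\beta$, that $\alpha$ is $\sigma$-contracting (i.e.\ $\alpha^{\A^\delta} \leq (\alpha^{\A})^\sigma$) and, dually, that $\beta$ is $\pi$-expanding.

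First I would set up the syntactic decomposition dictated by the Sahlqvist definition: along every critical branch of $+\alpha$, the term splits into an outer Skeleton part built from $\wedge, \vee, f$ (SLR-type, hence join-preserving-in-the-relevant-sense operators) sitting on top of an inner PIA part which, being excellent, is built from SRA-nodes, i.e.\ $g$ and $\wedge$ (meet-preserving connectives). The non-critical parts are $\epsilon^\partial$-uniform and thus order-dually behaved. The proof then proceeds by induction on the construction of the term, pushing the $\sigma$-extension inward. The two engines of the induction are exactly the two corollaries distilled in Subsection \ref{ssec:continuity properties}: Corollary \ref{cor:scott} handles the Skeleton layer, since it states $g^\sigma(f_1^\sigma,\ldots,f_m^\sigma) \leq (g(f_1,\ldots,f_m))^\sigma$ whenever the outer map $g$ is an operator — which is the case for the SLR-connectives modulo order type; and Corollary \ref{cor:suc:GNV} handles the PIA layer, giving $g^\sigma f^\sigma \leq (gf)^\sigma$ whenever $f$ is meet-preserving, which covers composition through SRA-nodes. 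One also uses Theorem \ref{thm:ribiero} ($(gf)^\sigma \leq g^\sigma f^\sigma$ for monotone maps) for the reverse inequalities needed on uniform (non-critical) subterms, together with the fact that uniform terms are $\sigma$-stable (both extensions agree and commute with composition on them, since each unary connective in a DLE is smooth). Assembling these, one obtains $\alpha^{\A^\delta} \leq (\alpha^{\A})^\sigma$ by structural induction, peeling off Skeleton nodes via Corollary \ref{cor:scott}, then SRA nodes via Corollary \ref{cor:suc:GNV}, and bottoming out at uniform subterms via $\sigma$-stability; the argument for $-\beta$ is order-dual, yielding $(\beta^{\A})^\pi \leq \beta^{\A^\delta}$.

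The main obstacle — and the place where the correction to \cite{GeNaVe05} matters — is that the naive inductive step for a Skeleton node of arity $>1$ requires Scott continuity of $g^\sigma$, which (as Remark \ref{rem:th 5.6 GNV is wrong} explains) fails for $n$-ary operators; what is actually available is $m$-Scott continuity (Lemma \ref{lemma:sigmapi}), and one must check that $m$-Scott continuity, not full Scott continuity, is exactly what is needed to make Corollary \ref{cor:scott} go through. Concretely, the delicate point is that the minimal closed elements produced when one pushes $\sigma$ past a Skeleton node live in $\Pi_i J^\infty(\B_i^\delta)$ rather than in $J^\infty(\Pi_i\B_i^\delta)$, and one has to verify that each coordinate of such a tuple, being completely join-irreducible in the factor, is indeed closed, so that the inductive hypothesis applies coordinatewise. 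A secondary subtlety is handling the interface between the Skeleton part and the PIA part of a good branch: one has to ensure the intermediate element at which the two paths are concatenated is of the right kind (closed) for the SUC property of the meet-preserving PIA maps (Lemma \ref{lem:meet-pres_suc}) to kick in. Once these matching-of-element-types bookkeeping issues are dispatched, the induction closes and the canonicity follows.
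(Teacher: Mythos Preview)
Your proposal has the right high-level architecture (J\'onsson's strategy via $\sigma$-expanding / $\pi$-contracting, with Corollaries \ref{cor:scott} and \ref{cor:suc:GNV} as the engines), but there is a genuine gap: you have omitted the reduction to the \emph{uniform} case, and without it your structural induction does not close.

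The paper's proof does not attempt to show directly that an arbitrary $\epsilon$-Sahlqvist $\alpha$ is $\sigma$-expanding. Instead it first invokes \cite[Lemma 5.14]{GeNaVe05} to transform $\alpha\leq\beta$ equivalently into $\alpha_1\leq\beta_1\lor\gamma$, where $\alpha_1$ is $\epsilon$-\emph{uniform} Sahlqvist, $\beta_1$ is $\epsilon^\partial$-uniform Sahlqvist, and $\gamma$ is uniform. Only then does it apply \cite[Lemma 5.10]{GeNaVe05} (the $\sigma$-expanding / $\pi$-contracting lemma) to $\alpha_1$ and $\beta_1$, and \cite[Lemma 5.5]{GeNaVe05} to $\gamma$, gluing them via \cite[Lemma 5.11]{GeNaVe05}. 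Your plan tries to run the induction on the original, possibly non-uniform $\alpha$, and this fails at exactly the non-critical subterms. If a Skeleton node $h$ has a child $\gamma$ consisting entirely of non-critical occurrences, then $\gamma$ is $\epsilon^\partial$-uniform, and by \cite[Lemma 5.5]{GeNaVe05} (cf.\ Lemma \ref{lem:additive:easy} here) such a term is only $\sigma$-\emph{contracting}, i.e.\ $(\gamma^\A)^\sigma\leq\gamma^{\A^\delta}$. But to push the $\sigma$-expanding inequality through $h$ via Corollary \ref{cor:scott} you would need $\gamma^{\A^\delta}\leq(\gamma^\A)^\sigma$, the opposite direction. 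Your sentence ``uniform terms are $\sigma$-stable (both extensions agree and commute with composition on them, since each unary connective in a DLE is smooth)'' is simply false: smoothness of the primitive connectives does not propagate to stability of compound terms, and in general uniform terms are only $\sigma$-contracting and $\pi$-expanding, not $\sigma$-stable.

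Two smaller points. First, your terminology is inverted relative to Definition \ref{def:stable}: the condition $\alpha^{\A^\delta}\leq(\alpha^\A)^\sigma$ is $\sigma$-\emph{expanding}, not $\sigma$-contracting. Second, your discussion of the $m$-Scott continuity correction is accurate and relevant, but note that even with that correction in place the paper still needs the weakening to additivity (Corollary \ref{lem:gen:Scott}) to handle the Skeleton node $+\vee$, which is not an operator; Corollary \ref{cor:scott} alone does not cover it.
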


\begin{proof}
The proof consists of the following implications:

\vspace{2mm}
\begin{center}
\begin{tabular}{lll}
\vspace{2mm}
& $\A\models \alpha \leq \beta $\\
\vspace{2mm}
$\Longleftrightarrow$& $\alpha^\A \leq \beta^\A$ &(by definition)\\
\vspace{2mm}
$\Longleftrightarrow$ &$\alpha_1^\A\leq \beta_1^\A\lor \gamma^\A$ & \cite[Lemma 5.14]{GeNaVe05}\\
\vspace{2mm}
$\Longrightarrow$ &$\left(\alpha_1^\A\right)^\sigma\leq \left(\beta_1^\A\right)^\pi\lor\left(\gamma^\A\right)^\sigma$ &\cite[Lemma 5.11]{GeNaVe05}\\
\vspace{2mm}
$\Longrightarrow$ & $\alpha_1^{\A^\delta}\leq\beta_1^{\A^\delta}\lor\gamma^{\A^\delta}$ &\cite[Lemmas 5.5 and 5.10]{GeNaVe05}\\
\vspace{2mm}
$\Longleftrightarrow$ & $\alpha^{\A^\delta}\leq\beta^{\A^\delta}$ & \cite[Lemma 5.14]{GeNaVe05}\\
\vspace{2mm}
$\Longleftrightarrow$ & $\A^\delta\models\alpha\leq\beta$ &(by definition).
\end{tabular}
\end{center}
\end{proof}

There are two key steps to the proof of Theorem \ref{thm:canonicity:Sahlqvist}. The first step equivalently transforms the inequality $\alpha\leq\beta$ into some inequality $\alpha_1\leq\beta_1\lor\gamma$ s.t.\ $\alpha_1\leq \beta_1$ is a uniform Sahlqvist inequality, $\gamma$ is a uniform term, and certain additional conditions are satisfied. This step is not involved in the refinement of the present paper, and for a discussion on it, the reader is referred to the companion paper \cite{PaSoZh14} in which this step is discussed, refined, and generalized.\footnote{The refinement in \cite{PaSoZh14} consists in an alternative proof which does not make use of \cite[Lemma 5.11]{GeNaVe05}. The generalization consists in the proof of canonicity of the inequalities in the language of DML on which the algorithm ALBA succeeds in calculating a first-order correspondent. This class is a significant proper extension of the Sahlqvist class.} The second step consists of showing that if $\alpha_1\leq \beta_1$ is uniform Sahlqvist, then $\alpha_1$ is $\sigma$-expanding and $\beta_1$ is $\pi$-contracting (cf.\ \cite[Lemma 5.10]{GeNaVe05}), and that if $\gamma$ is uniform, then $\gamma$ is $\sigma$-contracting (cf.\ \cite[Lemma 5.5]{GeNaVe05}).
 In the following subsection, we give a closer look at the second step, and among other things, we discuss a mistake in the proof of \cite[Lemma 5.10]{GeNaVe05} which reduces the scope of the result in \cite{GeNaVe05} to a proper fragment of Sahlqvist inequalities.

\subsection{Properties of Sahlqvist terms}\label{subsec:3:1:2}

The core of J\'onsson's method as applied in \cite{GeNaVe05}, and the step that we are going to generalize in the following section, is the proof that if $\alpha_1\leq \beta_1$ is uniform Sahlqvist, then $\alpha_1$ is $\sigma$-expanding and $\beta_1$ is $\pi$-contracting. We only consider the case of $\alpha_1$ in the present subsection, the case of $\beta_1$ being order-dual.
Recall that the $\sigma$-extension of a monotone map is its greatest UC extension (cf.\ Lemma \ref{thm:largestUC:DML}). Hence, in order to show that a term $t$ is $\sigma$-expanding, i.e.\ that $t^{\ca}\leq(t^{\A})^{\sigma}$, it suffices to show that $t^{\ca}$ is UC.
The term function $t^{\ca}$ is the composition of the $\sigma$-extensions of the interpretations of the logical connectives occurring in $t$.

Let us start by considering the composition of two monotone maps $f: \A\to \B$ and $g:\B\to \C$, in search for conditions which guarantee the composition $g^{\sigma}f^{\sigma}$ to be UC.

Recall that upper continuity, Scott-continuity, and strong upper-continuity are similar conditions, and specifically, their quantification patterns are of the following forms, respectively:

\vspace{2mm}

\begin{tabular}{l r}
\vspace{2mm}
\mbox{for all $q\in J^{\infty}(\B^{\delta})\ldots$ there exists some $x\in\kbbas$ \ldots} & ($\forall J\exists K$)\\
\vspace{2mm}
\mbox{for all $q\in J^{\infty}(\B^{\delta})\ldots$ there exists some $x\in J^{\infty}(\A^{\delta})$ \ldots} & ($\forall J\exists J$)\\
\vspace{2mm}
\mbox{for all $q\in K(\B^{\delta})\ldots$ there exists some $x\in K(\A^{\delta})$ \ldots}
 & ($\forall K\exists K$)\\
\end{tabular}

This quantification pattern suggests that there are two immediately available sufficient conditions on $f$ and $g$ which guarantee $g^{\sigma}f^{\sigma}$ to be UC:

\begin{itemize}

\item either $g^{\sigma}$ is Scott continuous ($\forall J\exists J$) and $f^{\sigma}$ is UC ($\forall J\exists K$);

\item or $g^{\sigma}$ is UC ($\forall J\exists K$) and $f^{\sigma}$ is SUC ($\forall K\exists K$).

\end{itemize}

Since $\sigma$-extensions of monotone functions are always UC, only half of each condition needs to be guaranteed: namely, in the first case $g^{\sigma}$ needs to be Scott continuous (and actually, $m$-Scott continuity is enough), and in the second case $f^{\sigma}$ needs to be SUC.

Corollaries \ref{cor:scott} and \ref{cor:suc:GNV} are indeed proved by making use of (a refinement of) the first and of the second option, respectively.
Specifically, as discussed in subsection \ref{ssec:continuity properties}, in \cite{GeNaVe05}, the map $g$ being an operator was taken as a sufficient condition for the first case to apply (Corollary \ref{cor:scott}), and $f$ being meet-preserving was taken as a sufficient condition for the second case to apply (Corollary \ref{cor:suc:GNV}).
Based on these two corollaries, \cite[Lemma 5.10]{GeNaVe05} claims that if $\alpha_1\leq \beta_1$ is uniform Sahlqvist, then $\alpha_1$ is $\sigma$-expanding and $\beta_1$ is $\pi$-contracting.

Hence, conceptually, \cite[Lemma 5.10]{GeNaVe05} motivates the syntactic shape of uniform Sahlqvist inequalities in terms of the order-theoretic behaviour of the interpretation of the logical connectives, and of the properties of their resulting composition.

The proof that $\alpha_1$ is $\sigma$-expanding is done by induction on $\alpha_1$. The cases in which the main connective of $\alpha_1$ is $f\in \{\Diamond, \wedge, \vee\}$ are treated simultaneously, ``as $f$ is an operator in all these cases.'' However, as discussed in Remark \ref{rem:meet preserving not dual op}, the connective $\vee$ is not an operator, and hence Corollary \ref{cor:scott} does not apply to it.

In conclusion, the proof of \cite[Lemma 5.10]{GeNaVe05} (hence that of \cite[Theorem 5.1]{GeNaVe05}, cf.\ Theorem \ref{thm:canonicity:Sahlqvist}) is incomplete, and the part of it which is proven only accounts for the canonicity of the fragment of $\epsilon$-Sahlqvist inequalities such that any critical branch is the concatenation of two paths $P_1$ and $P_2$, one of which may possibly be of length $0$, such that $P_1$ is a path from the leaf consisting (apart from variable nodes) only of SRA-nodes, and $P_2$ consists (apart from variable nodes) only of SLR-nodes.

\subsection{Weaker sufficient conditions for J\'onsson's argument}
\label{sec:weaker sufficient conditions}
In the present section, we weaken the sufficient conditions for the J\'onsson-style argument which feature in Corollaries \ref{cor:scott} and \ref{cor:suc:GNV}.

As we saw in the previous section, requiring the outer maps to be operators does not account for the full class of Sahlqvist inequalities, due to the presence of the nodes $+\vee$ and $-\wedge$, which are left adjoints but not operators.

However, as observed in Remark \ref{rem:meet preserving not dual op}, left adjoints (of any arity) preserve non-empty joins in each coordinate. That is, they are {\em additive} (cf.\ Definition \ref{def:order theoretic properties}).
Additivity is the weaker condition which is shown below to be sufficient for the J\'onsson-style argument. The basic idea is that outer maps are required to be {\em weakly $m$-Scott continuous} (see Definition \ref{def:weak:m:Scott} below). In what follows, we will find it useful to let the symbols $J^{\infty}_{\bot}(\mathbb{A}^{\delta})$ and $M^{\infty}_{\top}(\mathbb{A}^{\delta})$ abbreviate the sets $J^{\infty}(\mathbb{A}^{\delta})\cup \{\bot\}$ and $M^{\infty}(\mathbb{A}^{\delta})\cup \{\top\}$ respectively, for any DLE $\mathbb{A}$. For the sake of readability, $\Pi $ will abbreviate $\Pi_{i = 1}^m$.

\begin{definition}\label{def:weak:m:Scott}
For any monotone map $f:\Pi\mathbb{A}_i^{\delta}\to\mathbb{B}^{\delta}$,
\begin{enumerate}
\item[$(\forall J \exists J^m_\bot)$] $f$ is \emph{weakly $m$-Scott continuous} if for any $\overline{u}\in \Pi\mathbb{A}_i^{\delta}$ and any $q\in J^{\infty}(\mathbb{B}^{\delta})$,
if $q\leq f(\overline{u})$ then $q\leq f(\overline{x})$ for some $\overline{x}\in \Pi J^{\infty}_{\bot}(\bbas_i)$ s.t.\ $\overline{x}\leq \overline{u}$.
\item[$(\forall M \exists M^m_\top)$] $f$ is \emph{dually weakly $m$-Scott continuous} if for any $\overline{u}\in \Pi\mathbb{A}_i^{\delta}$ and any $q\in M^{\infty}(\mathbb{B}^{\delta})$,
if $q\geq f(\overline{u})$ then $q\geq f(\overline{x})$ for some $\overline{x}\in \Pi M^{\infty}_{\top}(\bbas_i)$ s.t.\ $\overline{x}\geq \overline{u}$.
\end{enumerate}
\end{definition}

\begin{lemma}\label{thm:operator:Scott}
For any monotone map $f:\Pi\A^{\delta}_i\to\B^{\delta}$,
\begin{enumerate}
\item if $f$ is completely additive, then $f$ is weakly $m$-Scott continuous;
\item if $f$ is completely multiplicative, then $f$ is dually weakly $m$-Scott continuous.
\end{enumerate}
\end{lemma}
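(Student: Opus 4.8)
The plan is to prove item (i) directly from the definitions; item (ii) then follows by order-duality, using the standard translation $f^\partial$ between a map and its dual together with the identities $(f^\partial)^\sigma = (f^\pi)^\partial$ recorded earlier and the fact that $J^\infty$ of a canonical extension corresponds to $M^\infty$ of the dual. So I would concentrate on (i). Assume $f:\Pi\A_i^\delta\to\B^\delta$ is completely additive, fix $\overline{u}\in\Pi\A_i^\delta$ and $q\in J^\infty(\B^\delta)$ with $q\leq f(\overline{u})$; the goal is to produce $\overline{x}\in\Pi J^\infty_\bot(\A_i^\delta)$ with $\overline{x}\leq\overline{u}$ and $q\leq f(\overline{x})$.

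The key step is to exploit that each $\A_i^\delta$ is a perfect BDL (as recalled on page \pageref{canext bdl is perfect}), hence completely join-generated by $J^\infty(\A_i^\delta)$, so $u_i=\bigvee\mathcal{J}_i$ for $\mathcal{J}_i:=\{x\in J^\infty(\A_i^\delta)\mid x\leq u_i\}$. Here the subscript-$\bot$ enlargement of the index set is precisely what covers the degenerate case $u_i=\bot$, where $\mathcal{J}_i=\varnothing$ and we must pick $x_i=\bot$; in all other coordinates $\mathcal{J}_i\neq\varnothing$. Now process the coordinates one at a time: since $f$ preserves \emph{non-empty} joins in each coordinate, for any coordinate $i$ with $u_i\neq\bot$ we have $f(\overline{u})=\bigvee\{f(\overline{u}[x_i/u_i])\mid x_i\in\mathcal{J}_i\}$, and because $q$ is completely join-prime we may fix a single $x_i\in\mathcal{J}_i$ with $q\leq f(\overline{u}[x_i/u_i])$. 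Iterating over $i=1,\dots,m$ (using at step $i$ that $q$ is still below $f$ of the tuple obtained so far, and leaving $x_i=\bot$ whenever $u_i=\bot$), we arrive at $\overline{x}\in\Pi J^\infty_\bot(\A_i^\delta)$ with $\overline{x}\leq\overline{u}$ and $q\leq f(\overline{x})$, as desired. Alternatively, and perhaps cleaner, one can do all coordinates simultaneously: by complete additivity applied coordinatewise, $f(\overline{u})=\bigvee\{f(\overline{x})\mid \overline{x}\in\Pi\mathcal{J}_i\}$ whenever every $\mathcal{J}_i\neq\varnothing$, and complete join-primality of $q$ yields the required $\overline{x}$ in one stroke; the coordinates with $u_i=\bot$ are then handled by padding with $\bot$ in those slots, which is harmless since one checks $f$ of the padded tuple is still $\geq$ the relevant value using monotonicity is not quite enough — one must instead simply note that if some $u_i=\bot$ then, since we are only required to land in $J^\infty_\bot$, we restrict attention to the non-$\bot$ coordinates and feed $\bot$ into the rest, observing $\overline{x}\leq\overline{u}$ still holds and the join identity over the non-trivial coordinates already gave $q\leq f(\overline{x})$.

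The only mild obstacle is the bookkeeping around empty coordinates: complete additivity does \emph{not} say $f$ preserves the empty join, so one cannot write $f(\overline{u})$ as a join indexed over $\Pi\mathcal{J}_i$ when some $\mathcal{J}_i$ is empty. The fix is exactly the $J^\infty_\bot$ formulation of weak $m$-Scott continuity: on coordinates where $u_i=\bot$ we are permitted (and forced) to choose $x_i=\bot=u_i$, and the join-decomposition argument is then carried out only over the coordinates with $u_i\neq\bot$, which is legitimate since on those coordinates the joins involved are non-empty. I expect this to be short once the perfectness of $\A_i^\delta$ and the complete join-primality of $q\in J^\infty(\B^\delta)$ are invoked; it closely parallels the proof of Lemma \ref{lemma:sigmapi} but with $J^\infty$ replaced by $J^\infty_\bot$ to accommodate the weaker (merely additive, not operator) hypothesis.
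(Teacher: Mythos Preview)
Your proposal is correct and follows essentially the same approach as the paper: write each $u_i$ as a join of completely join-irreducibles using perfectness of $\A_i^\delta$, distribute $f$ over these joins via complete additivity, and use complete join-primality of $q$ to select a single tuple. The only difference is cosmetic: where you split into cases (or iterate coordinate-by-coordinate) to handle coordinates with $u_i=\bot$, the paper simply replaces each index set $\mathcal{J}_i$ by $\mathcal{J}_i\cup\{\bot_i\}$ upfront, which leaves $\bigvee(\mathcal{J}_i\cup\{\bot_i\})=u_i$ unchanged but guarantees every factor in the product $\Pi_i(\mathcal{J}_i\cup\{\bot_i\})$ is non-empty, so the complete-additivity step applies uniformly without any case distinction.
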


\begin{proof}
(i): Let $\overline{u}\in\Pi \mathbb{A}^{\delta}_i$ and $q\in J^{\infty}(\mathbb{B}^{\delta})$ such that $q\leq f(\overline{u})$. As discussed on page \pageref{canext bdl is perfect}, $\mathbb{A}_i^{\delta}$ is perfect for $1\leq i\leq m$, therefore $u_i=\bigvee\mathcal{J}_i$ for some $\mathcal{J}_i\subseteq J^{\infty}(\ca_i)$.
Since $f$ is completely additive, $q\leq\bigvee\{f(\overline{x})\mid \overline{x}\in\Pi(\mathcal{J}_i\cup\{\bot_i\})\}$. Since $q\in J^{\infty}(\mathbb{B}^{\delta})$, this implies that $q\leq f(\overline{x})$ for some $\overline{x}\in\Pi(\mathcal{J}_i\cup\{\bot_i\})\subseteq \Pi J^{\infty}_\bot(\bbas_i)$ s.t.\ $\overline{x}\leq(\bigvee\mathcal{J}_i)_{i = 1}^m=\overline{u}$. Item (ii) is shown order dually.
\end{proof}

\begin{corollary}\label{lem:gen:Scott}
For all monotone maps $f_i:\A_i\to{\B}_i$, $1\leq i\leq m$ and $g:\Pi{\B}_i\to\C$,
\begin{enumerate}
\item if $g$ is additive, then $g^{\sigma}(f_1^{\sigma}, \ldots, f_m^{\sigma})\leq(g(f_1, \ldots, f_m))^{\sigma}$;
\item if $g$ is multiplicative, then $g^{\pi}(f_1^{\pi}, \ldots, f_m^{\pi})\geq(g(f_1, \ldots, f_m))^{\pi}$.
\end{enumerate}
\end{corollary}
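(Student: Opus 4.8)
The plan is to follow the proof of Corollary \ref{cor:scott} almost verbatim, with two changes: the appeal to $m$-Scott continuity (Lemma \ref{lemma:sigmapi}) is replaced by an appeal to \emph{weak} $m$-Scott continuity, and the extra $\bot$-coordinates produced by this weakening are absorbed by a one-line monotonicity observation. Since items (i) and (ii) are order dual — pass to $f_i^{\partial}:\A_i^{\partial}\to\B_i^{\partial}$ and $g^{\partial}:\Pi\B_i^{\partial}\to\C^{\partial}$, use the identities $(h^{\partial})^{\sigma}=(h^{\pi})^{\partial}$ and $(h^{\partial})^{\pi}=(h^{\sigma})^{\partial}$ recorded in Subsection \ref{subsec:can:ext:maps}, and note that $g$ is multiplicative iff $g^{\partial}$ is additive — I would only prove (i) and state that (ii) follows dually.

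For (i), write $\overline{f}:\Pi\A_i\to\Pi\B_i$ for the product map of the $f_i$ and $\overline{f^\sigma}$ for the product map of their $\sigma$-extensions. The composite $g^\sigma(\overline{f^\sigma}):\Pi\A_i^\delta\to\C^\delta$ is monotone and, restricted to $\Pi\A_i$, agrees with $g(\overline{f})$, because each $f_i^\sigma$ extends $f_i$ and $g^\sigma$ extends $g$. By Lemma \ref{thm:largestUC:DML}(1), $(g(\overline{f}))^\sigma$ is the greatest monotone UC extension of $g(\overline{f})$, so it suffices to prove that $g^\sigma(\overline{f^\sigma})$ is UC. Fix $\overline{u}\in\Pi\A_i^\delta$ and $q\in J^{\infty}(\C^\delta)$ with $q\leq g^\sigma(\overline{f^\sigma}(\overline{u}))$. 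Since $g$ is additive, Lemma \ref{lem:complete:additive}(i) gives that $g^\sigma$ is completely additive, and then Lemma \ref{thm:operator:Scott}(i) gives that $g^\sigma$ is weakly $m$-Scott continuous; applying this to $q\leq g^\sigma(\overline{f^\sigma}(\overline{u}))$ produces $\overline{x}\in\Pi J^{\infty}_{\bot}(\B_i^\delta)$ with $\overline{x}\leq\overline{f^\sigma}(\overline{u})$ and $q\leq g^\sigma(\overline{x})$.

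Next I would build $\overline{y}\in\Pi K(\A_i^\delta)$ coordinatewise. For each $i$ with $x_i\in J^{\infty}(\B_i^\delta)$, the UC-ness of $f_i^\sigma$ (every $\sigma$-extension of a monotone map is UC, Lemma \ref{thm:largestUC:DML}(1)) applied to $x_i\leq f_i^\sigma(u_i)$ yields $y_i\in K(\A_i^\delta)$ with $y_i\leq u_i$ and $x_i\leq f_i^\sigma(y_i)$. For each $i$ with $x_i=\bot$, set $y_i:=\bot$, which lies in $K(\A_i^\delta)$, satisfies $y_i\leq u_i$, and trivially satisfies $x_i=\bot\leq f_i^\sigma(y_i)$. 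Then $\overline{y}\in\Pi K(\A_i^\delta)=K(\Pi\A_i^\delta)$, $\overline{y}\leq\overline{u}$, and $\overline{x}\leq\overline{f^\sigma}(\overline{y})$, so by monotonicity of $g^\sigma$ we obtain $q\leq g^\sigma(\overline{x})\leq g^\sigma(\overline{f^\sigma}(\overline{y}))$. As $\overline{y}$ is closed and below $\overline{u}$, this is exactly the witness required for UC-ness of $g^\sigma(\overline{f^\sigma})$, and the proof is complete.

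The only genuinely new ingredient compared with Corollary \ref{cor:scott} is the treatment of the $\bot$-coordinates thrown up by weak $m$-Scott continuity, and this is harmless precisely because $\bot$ is a closed element lying below everything, so I do not anticipate a real obstacle: the substantive order-theoretic work has already been isolated in Lemma \ref{lem:complete:additive}(i) (additive $\Rightarrow$ $\sigma$-extension completely additive) and Lemma \ref{thm:operator:Scott}(i) (completely additive $\Rightarrow$ weakly $m$-Scott continuous).
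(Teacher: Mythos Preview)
Your proof is correct and follows essentially the same route as the paper's own argument: reduce to showing that $g^\sigma(\overline{f^\sigma})$ is UC via Lemma \ref{thm:largestUC:DML}, invoke Lemmas \ref{lem:complete:additive}(i) and \ref{thm:operator:Scott}(i) to get weak $m$-Scott continuity of $g^\sigma$, and then handle the $\bot$-coordinates in $\overline{x}$ by setting the corresponding $y_i:=\bot$. Your treatment is in fact slightly more careful than the paper's in that you explicitly note why $g^\sigma(\overline{f^\sigma})$ extends $g(\overline{f})$, which is needed to invoke the maximality of the $\sigma$-extension.
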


\begin{proof}
(i) Let $\overline{f}: \Pi\A_i\to \Pi\B_i$ be the product map of the maps $f_i$, and let $\overline{f^\sigma}$ be the product map of their $\sigma$-extensions.

Since $(g(\overline{f}))^\sigma$ is the greatest UC extension of the monotone map $g(\overline{f})$ (cf.\ Lemma \ref{thm:largestUC:DML}), it is enough to show that $g^\sigma (\overline{f^\sigma})$ is UC. Fix $\overline{u}\in \Pi\A_i$ and $q\in J^{\infty}(\C^{\delta})$ such that $q\leq g^\sigma (\overline{f^\sigma}(\overline{u}))$. By Lemmas \ref{lem:complete:additive} and \ref{thm:operator:Scott}, this implies that $q\leq g^\sigma (\overline{x})$ for some $\overline{x}\in \Pi J^{\infty}_\bot(\mathbb{B}_i^{\delta})$ such that $\overline{x}\leq \overline{f^\sigma}(\overline{u})$. Because (each component of) $\overline{f^\sigma}$ is UC, if $x_i\neq\bot_i$, then $x_i\leq f_i^{\sigma}(x_i)$ for some $y_i\in K(\ca)$ s.t.\ $y_i\leq u_i$; if $x_i=\bot_i$, then letting $y_i = \bot_i\in K(\ca)$ satisfies both $x_i\leq f_i^{\sigma}(y_i)$ and $v_i\leq u_i$.
In either case, $\overline{x}\leq \overline{f^\sigma}(\overline{y})$ for some $\overline{y}\in \Pi K(\A_i^\delta) = K(\Pi\A_i^\delta)$ such that $\overline{y}\leq \overline{u}$. Finally, by monotonicity, $q\leq g^\sigma (\overline{x})\leq g^\sigma (\overline{f^\sigma}(\overline{y}))$, as required. Item (ii) is shown order dually.
\end{proof}
\begin{remark}
The proof of \cite[Lemma 5.10]{GeNaVe05} can be emended by treating $\vee$ separately from $f\in \{\Diamond, \wedge\}$, and appealing to Corollary \ref{lem:gen:Scott} above in the case of the induction step in which $\vee$ is the main connective. This completes the proof of Theorem \ref{thm:canonicity:Sahlqvist}.
\end{remark}

\section{Canonicity in a regular setting}\label{sec:5}
As we have seen in the previous section, additivity, rather than being an operator, is the key notion for proving the canonicity of the full class of Sahlqvist inequalities via J\'onsson's argument. This also implies that J\'onsson's argument can go through in settings in which modal connectives are interpreted algebraically as additive maps rather than operators. This motivates the treatment of the present section, in which the J\'onsson-style argument for canonicity is extended to the setting of regular distributive lattice expansions (DLRs). In what follows, when additive and multiplicative maps are mentioned, the order-type with respect to which the maps are additive or multiplicative is omitted.

\subsection{Regular distributive lattice expansions}
\label{ssec:DLR}
\begin{definition}\label{def:dlr}
A \emph{regular distributive lattice expansion} (DLR) is a structure $\A=(A, f, g, k, l)$, such that $A$ is a bounded distributive lattice, $f$ is a unary additive map, $g$ is a unary multiplicative map, $k$ is a $m$-ary additive map and $l$ is an $n$-ary multiplicative map. \end{definition}
\begin{remark} The algebraic signature of the definition above reflects the logical signature which we are going to consider, about which
some observations are in order.
\begin{enumerate}
\item The choice of the signature above is motivated by the need to highlight the difference between the order-theoretic properties of the unary operators and the non-unary ones. Indeed, when $n>1$, the connectives $k$ and $l$ cannot be assumed to be smooth, and then their stability will play a crucial
 role.
 \item The DLR signature samples different order-theoretic behaviours. Hence, it can be used as as a template, in which different situations can be accounted for. In particular, the theory developed for this signature can be straightforwardly extended to signatures consisting of various copies (e.g.\ $f_1, f_2\ldots$) of the same connective (e.g.\ $f$) having the same order-theoretic behaviour of the `template' one. The DML signature and the signature of substructural logics can be obtained as special cases of the DLR signature. For instance, the results in \cite{GeNaVe05} are obtained as a special case, and (the ``regular'' versions of) substructural logics are encompassed.
\end{enumerate}
\end{remark}
\begin{lemma}[\cite{HBMoL}, Proposition 111.3]\label{lem:smooth}

If $f : \mathbb{A}\rightarrow\mathbb{B}$ is additive or multiplicative\footnote{If $\A = \Pi_i\A_i$, then the assumption would be equivalently restated in terms of $f$ being p-additive, or p-multiplicative.} (cf.\ Definition \ref{def:order theoretic properties}), then $f$ is smooth.
\end{lemma}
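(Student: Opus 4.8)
The plan is to reduce the multiplicative case to the additive one and then establish smoothness directly, exploiting the explicit form of the $\pi$-extension on open elements together with the compactness of the canonical extension. Since $(f^{\partial})^{\sigma}=(f^{\pi})^{\partial}$ and $(f^{\partial})^{\pi}=(f^{\sigma})^{\partial}$ (cf.\ the discussion after Definition~\ref{def:canonical:extension:maps}), and $f$ is multiplicative iff $f^{\partial}$ is additive, it is enough to treat the case in which $f:\A\to\B$ is additive; such an $f$ is automatically monotone. We always have $f^{\sigma}\leq f^{\pi}$ (noted right after Definition~\ref{def:canonical:extension:maps}), so it remains to prove $f^{\pi}(u)\leq f^{\sigma}(u)$ for every $u\in\A^{\delta}$. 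This is immediate when $u=\bot$ (both sides equal $f(\bot)$), so assume $u\neq\bot$.

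First I would rewrite both sides. By Lemma~\ref{lem:complete:additive}(i), $f^{\sigma}$ is completely additive; since $\A^{\delta}$ is perfect and $u\neq\bot$, the set $\{j\in J^{\infty}(\A^{\delta})\mid j\leq u\}$ is non-empty and its join is $u$, whence $f^{\sigma}(u)=\bigvee\{f^{\sigma}(j)\mid j\in J^{\infty}(\A^{\delta}),\ j\leq u\}$, and for each such $j$ (which is closed) one has $f^{\sigma}(j)=\bigwedge\{f(a)\mid a\in\A,\ j\leq a\}$. On the other side, for any open element $y$ the definition of $f^{\pi}$ collapses to $f^{\pi}(y)=\bigvee\{f(a)\mid a\in\A,\ a\leq y\}$ (the term for $y$ itself is the smallest one in the defining meet). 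Since $\B^{\delta}$ is perfect, it then suffices to fix $q\in J^{\infty}(\B^{\delta})$ with $q\leq f^{\pi}(u)$ and to show that $q\leq f^{\sigma}(j)$ for some $j\in J^{\infty}(\A^{\delta})$ with $j\leq u$.

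The core of the argument is an application of compactness. Suppose, towards a contradiction, that $q\nleq f^{\sigma}(j)$ for every $j\in J^{\infty}(\A^{\delta})$ with $j\leq u$; using the description of $f^{\sigma}(j)$ above, for each such $j$ pick $a_{j}\in\A$ with $j\leq a_{j}$ and $q\nleq f(a_{j})$. Put $y:=\bigvee\{a_{j}\mid j\in J^{\infty}(\A^{\delta}),\ j\leq u\}$; then $y$ is open and $u\leq y$, so $q\leq f^{\pi}(u)\leq f^{\pi}(y)=\bigvee\{f(a)\mid a\in\A,\ a\leq y\}$, and since $q$ is completely join-prime, $q\leq f(a)$ for some $a\in\A$ with $a\leq y$. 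By compactness (Definition~\ref{def:can:ext2.3}) there are finitely many $j_{1},\ldots,j_{k}\leq u$ with $a\leq a_{j_{1}}\vee\cdots\vee a_{j_{k}}$, and this list can be taken non-empty because $u\neq\bot$ (if compactness returns an empty join, then $a=\bot$ and we may use $a\leq a_{j_{0}}$ for any fixed $j_{0}\leq u$). Since $f$ is additive, $f(a)\leq f(a_{j_{1}})\vee\cdots\vee f(a_{j_{k}})$, and since $q$ is join-prime, $q\leq f(a_{j_{i}})$ for some $i$, contradicting the choice of $a_{j_{i}}$. Hence $q\leq f^{\sigma}(j)\leq f^{\sigma}(u)$ for some $j\leq u$; as $q$ was an arbitrary completely join-prime below $f^{\pi}(u)$ and $\B^{\delta}$ is perfect, we conclude $f^{\pi}(u)\leq f^{\sigma}(u)$, as required.

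The only genuinely delicate points are the invocation of compactness — which is precisely where additivity (preservation of finite \emph{non-empty} joins) is used, to collapse $f(a)$ into finitely many $f(a_{j_{i}})$ — and the bookkeeping around the degenerate cases $u=\bot$ and empty joins; everything else is routine manipulation with the perfectness of $\A^{\delta}$ and $\B^{\delta}$. An equivalent, slightly more structural phrasing would observe that the computation above shows $f^{\sigma}$ is lower continuous and then conclude via Lemma~\ref{thm:largestUC:DML}(2), since $f^{\pi}$ is the smallest monotone LC extension of $f$.
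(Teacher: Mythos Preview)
Your proof is correct. The paper does not actually prove this lemma: it simply records it with a citation to \cite{HBMoL}, Proposition~111.3, and moves on. So there is no ``paper's own proof'' to compare against.

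That said, your argument is sound and self-contained within the paper's framework: the duality reduction from the multiplicative to the additive case is exactly the right move, the rewrite of $f^{\pi}$ on open elements and $f^{\sigma}$ on closed elements is standard, and the compactness-plus-additivity step is where the real content lies (and you have identified it as such). The care you take with the degenerate cases $u=\bot$ and the possibly empty finite join from compactness is appropriate, since additivity only guarantees preservation of \emph{non-empty} finite joins. Your use of Lemma~\ref{lem:complete:additive}(i) to get complete additivity of $f^{\sigma}$ is a clean shortcut for decomposing $f^{\sigma}(u)$ over the join-irreducibles below $u$.

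The closing remark about recasting the argument as showing $f^{\sigma}$ is lower continuous and invoking Lemma~\ref{thm:largestUC:DML}(ii) is morally right but would need a slightly different bookkeeping (you would dualize and track meet-irreducibles above $f^{\sigma}(u)$ rather than join-irreducibles below $f^{\pi}(u)$); as stated it is a signpost rather than a literal restatement of what you computed, but that is harmless.
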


\begin{definition}\label{def:canextdlr}
 The canonical extension of a DLR $\A=(A, f, g, k, l)$ is defined as the tuple $\A^\delta=(A^\delta, f^\sigma, g^\sigma, k^\sigma,l^\pi)$, where $A^\delta$ is the canonical extension of the BDL $A$ and $f^\sigma, g^\sigma, k^\sigma,l^\pi$ are defined according to Definition \ref{def:canonical:extension:maps}.
 \end{definition}
\begin{remark} \begin{enumerate}
\item Unlike $f$ and $g$, the maps $k$ and $l$ might be non-smooth (cf.\ \cite{HBMoL}, Example 110), and hence in defining the canonical extension of a DLR, we need to choose which of their extensions to take. Our choice in Definition \ref{def:canextdlr} is motivated by the fact that the $\sigma$-extension of an (p-)additive map is completely (p-)additive and the $\pi$-extension of a (p-)multiplicative map is completely (p-)multiplicative (cf.\ Lemma \ref{lem:complete:additive}). So, Definition \ref{def:canextdlr} guarantees that the canonical extension of a DLR is a perfect DLR.\footnote{A \emph{perfect DLR} is a DLR whose underlying BDL is perfect and its additive maps (resp. multiplicative maps) preserve arbitrary non-empty joins (resp. arbitrary non-empty meets).}.

 \item Moreover, by definition, $k$ is $\sigma$-stable and $l$ is $\pi$-stable (cf.\ Definition \ref{def:stable}). As we will see in the proof of Lemma \ref{lem:additive:difficult}, stability is enough to guarantee the analogue of \cite[Lemma 5.10]{GeNaVe05} to go through.
\end{enumerate}
\end{remark}
\noindent For a set $\mathsf{Prop}$ of propositional variables, the language DLR of regular distributive lattices expansions is defined recursively as follows:
$$\varphi::=p \in \mathsf{Prop}\mid \top\mid\ \perp\ \mid\varphi\land\varphi\mid\varphi\lor\varphi\mid f(\varphi)\mid g(\varphi)\mid k(\overline{\varphi})\mid l(\overline{\varphi}).$$\label{def:regular:modal:logic}
The nodes in the signed generation tree of DLR-terms are classified according to the table below:

\begin{table}[\here]
\begin{center}
\begin{tabular}{| c | c|}
\hline
SAC & SMP \\
\hline
\begin{tabular}{ c c c c c}
$+$ &$\vee$ &$\wedge$ &$f$ & $k$ \\
$-$ &$\wedge$ & $\vee$ & $g$ & $l$\\

\end{tabular}
&
\begin{tabular}{ c c c}
$+$ &$\wedge$&$g$\\
$-$ &$\vee$ & $f$\\

\end{tabular}\\
\hline
\end{tabular}
\end{center}
\caption{SAC and SMP nodes for $\mathrm{DLR}$. }
\label{Join:and:Meet:Friendly:Table}
\end{table}

\begin{definition}\label{Def:Good:Branches}
Nodes in signed generation trees will be called \emph{syntactically additive coordinate-wise (SAC)}, \emph{syntactically p-multiplicative (SMP)}, according to the specification given in Table \ref{Join:and:Meet:Friendly:Table}. A branch in a signed generation tree $\ast s$, with $\ast \in \{+, - \}$, is called an \emph{excellent branch} if it is the concatenation of two paths $P_1$ and $P_2$, one of which may possibly be of length $0$, such that $P_1$ is a path from the leaf consisting (apart from variable nodes) only of SMP-nodes, and $P_2$ consists (apart from variable nodes) only of SAC-nodes.
\end{definition}

\begin{definition}\label{def:rsahlqvist}Given an order type $\epsilon$, the signed generation tree $\ast s$ (for $\ast \in \{-, + \}$) of a term $s(p_1,\ldots p_n)$ is \emph{$\epsilon$-regular Sahlqvist} ($\epsilon$-DLR-Sahlqvist)
if for all $1 \leq i \leq n$, every $\epsilon$-critical branch with leaf labelled $p_i$ is excellent.

An inequality $s \leq t$ is \emph{$\epsilon$-regular Sahlqvist} if the trees $+s$ and $-t$ are both $\epsilon$-regular Sahlqvist. An inequality $s \leq t$ is \emph{regular Sahlqvist} (DLR-Sahlqvist) if it is $\epsilon$-regular Sahlqvist for some $\epsilon$.
\end{definition}

It is easy to see that Sahlqvist DML-inequalities (cf.\ \cite{ConPal12, CoGhPa14}) are DLR-Sahlqvist, and moreover, if the DLR signature is specialized to unary maps which are normal, then DLR-Sahlqvist inequalities coincide with Sahlqvist DML-inequalities.
\begin{example}\label{eg:lemmonsahlqvist}

Lemmon's axioms (cf.\ Subsection \ref{subset:reg:modal:logic}) are examples of DLR-Sahlqvist formulas/inequalities. Indeed, the following axioms are DLR-Sahlqvist for the order-type $\epsilon_p=1$:
\[ (2)\ \Box p\to p, \quad\quad (4)\ \Box p\to\Box\Box p,\]
the following axiom is DLR-Sahlqvist for the order-type $\epsilon_p=\partial$:
 \[(5)\ \neg\Box p\to\Box\neg\Box p,\]
and the following axioms are DLR-Sahlqvist for the order-type $\epsilon_p=1,\epsilon_q=\partial$: \[(1)\ \Box(p\to q)\to\Box(\Box p\to\Box q)\mbox{ and }(1')\ \Box(p\to q)\to(\Box p\to\Box q).\]

Note that the axioms (1) and (1') are not DLR-Sahlqvist for the natural order-type $\epsilon_p=1,\epsilon_q=1$.
\end{example}
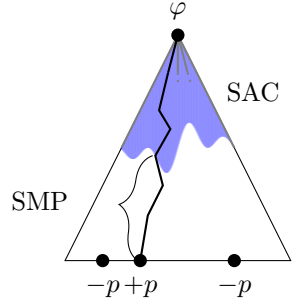
\begin{figure}
\begin{center}

\begin{tikzpicture}[node/.style={circle, draw, fill=black}, scale=0.5]
\draw (0,3) -- (3,-3) -- (-3, -3) -- cycle;
\foreach \x in {-1.5, -1.49,...,1.5}
 {
 \draw[blue, thick, draw opacity=0.1] (\x, {0.3*(sin(75.3*\x)+sin(75.3*\x*3)+sin(75.3*\x*5))}) -- (\x, {3 - abs(2*\x)});
 }

\draw[gray, thick] (0, 3) -- (-1.5, 0);
\draw[gray, thick] (0, 3) -- (1.5, 0);
\draw[gray, thick] (0, 3) -- (0, 2);
\draw[gray, thick, dotted] (0, 2) -- (0, 1.75);
\draw[black, thick] (0, 3) -- (-0.26, 2);
\draw[gray, thick] (0, 3) -- (0.26, 2);
\draw[gray, thick, dotted] (0.26, 2) -- (0.3349, 1.75);
\draw[black, thick] (-0.26, 2) -- (-0.5, 1) -- (-0.2, 0.5) -- (-0.6, {0.3*(sin(75.3*-0.6)+sin(75.3*-0.6*3)+sin(75.3*-0.6*5))}) -- (-0.4, -1) -- (-0.8, -1.8) -- (-1,-3);

\draw [decorate,decoration={brace,amplitude=10pt, mirror},xshift=-2pt,yshift=0pt]
(-0.6, {0.3*(sin(75.3*-0.6)+sin(75.3*-0.6*3)+sin(75.3*-0.6*5))}) -- (-1,-3) node [black,midway,xshift=-1.4cm, yshift=0.1cm]
{SMP};

\node[circle,fill=black!100,minimum size=5pt,inner sep=0pt, label=below:$+p$] (pos1) at (-1,-3) {};
\node[circle,fill=black!100,minimum size=5pt,inner sep=0pt, label=below:$-p$] (neg1) at (-2,-3) {};
\node[circle,fill=black!100,minimum size=5pt,inner sep=0pt, label=below:$-p$] (neg2) at (1.5, -3) {};
\node[circle,fill=black!100,minimum size=5pt,inner sep=0pt, label=above:$\varphi$] (root) at (0, 3) {};
\node[thick,standard/.style={draw=none, fill=none}] (Operators) at (2, 1.5) {{SAC}};

\end{tikzpicture}
\caption{Signed generation tree for regular Sahlqvist antecedent}

\end{center}
\end{figure}

\subsection{Canonicity for Sahlqvist DLR-inequalities}
\label{ssec:DLR canonicity}

\begin{theorem}\label{thm:additivecan}
Every regular Sahlqvist inequality is canonical.
\end{theorem}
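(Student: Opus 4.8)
The plan is to follow the two-step J\'onsson strategy exactly as it is organized in the proof of Theorem~\ref{thm:canonicity:Sahlqvist}, replacing the connective-specific ingredients by their regular counterparts. The statement to be proved is that $\A\models\alpha\leq\beta$ implies $\A^\delta\models\alpha\leq\beta$ for every DLR $\A$ and every regular Sahlqvist inequality $\alpha\leq\beta$. The first step is the Ackermann-type transformation (the DLR-analogue of \cite[Lemma 5.14]{GeNaVe05} together with \cite[Lemma 5.11]{GeNaVe05}, or the alternative route of the companion paper \cite{PaSoZh14}): given an $\epsilon$-regular Sahlqvist inequality $\alpha\leq\beta$, one rewrites it equivalently over every DLR and over its canonical extension as an inequality $\alpha_1\leq\beta_1\lor\gamma$ in which $\alpha_1\leq\beta_1$ is a \emph{uniform} regular Sahlqvist inequality and $\gamma$ is a uniform term. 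This step involves only the order-theoretic behaviour of $\wedge,\vee$ and the adjunction rules that surface the SMP-parts, and it goes through unchanged. By the diagram in the proof of Theorem~\ref{thm:canonicity:Sahlqvist}, canonicity of $\alpha\leq\beta$ then reduces to three claims about stability (cf.\ Definition~\ref{def:stable}): (a) a uniform term $\gamma$ is $\sigma$-contracting; (b) a uniform regular Sahlqvist term $\alpha_1$ is $\sigma$-expanding; (c) a uniform regular Sahlqvist term $\beta_1$ is $\pi$-contracting.

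Claim~(a) is the regular analogue of \cite[Lemma 5.5]{GeNaVe05} and is proved by a routine induction on $\gamma$: since $\gamma$ is uniform, each occurrence of $f,g,k,l$ sits in a slot in which it acts monotonically, so Ribeiro's inequality $(hf)^\sigma\leq h^\sigma f^\sigma$ (Theorem~\ref{thm:ribiero}), together with $f^\sigma\leq f^\pi$ and the interaction of extensions with order-duals, yields $(\gamma^\A)^\sigma\leq\gamma^{\A^\delta}$. The only point specific to the regular setting is that for the non-smooth connectives $k$ and $l$ one uses that, by Definition~\ref{def:canextdlr}, $k$ is $\sigma$-stable and $l$ is $\pi$-stable, so that $k^{\A^\delta}=(k^\A)^\sigma$ and $l^{\A^\delta}=(l^\A)^\pi$, whereas for the unary $f,g$ smoothness (Lemma~\ref{lem:smooth}) makes the choice of extension immaterial.

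Claim~(c) is order-dual to Claim~(b), so the core of the proof is Claim~(b), which is the regular version of \cite[Lemma 5.10]{GeNaVe05} and the place where the gaps discussed in Section~\ref{sec:jonsson strategy} are avoided. Since $(\alpha_1^\A)^\sigma$ is the largest monotone UC extension of $\alpha_1^\A$ (Lemma~\ref{thm:largestUC:DML}), it suffices to show that $\alpha_1^{\A^\delta}$ is UC, and this is done by induction on $\alpha_1$, along the two regimes dictated by Definition~\ref{Def:Good:Branches} (the base cases, variables and constants, being trivial). If the main connective of $\alpha_1$ is SAC, then $\alpha_1=h(\alpha_1^{(1)},\ldots,\alpha_1^{(j)})$ with $h$ additive in the relevant coordinates, each $\alpha_1^{(i)}$ again uniform regular Sahlqvist and hence, by the induction hypothesis, $\sigma$-expanding; monotonicity and Corollary~\ref{lem:gen:Scott}(i) (and its order-dual for negatively signed skeleton nodes) then give
\[
\alpha_1^{\A^\delta}=h^\sigma\big((\alpha_1^{(1)})^{\A^\delta},\ldots\big)\leq h^\sigma\big(((\alpha_1^{(1)})^\A)^\sigma,\ldots\big)\leq\big(h((\alpha_1^{(1)})^\A,\ldots)\big)^\sigma=(\alpha_1^\A)^\sigma ,
\]
and the right-hand side is UC. It is exactly here that \emph{additivity}, rather than being an operator, is what is needed, since the skeleton node $+\vee$ is a left adjoint but not an operator (cf.\ Remark~\ref{rem:meet preserving not dual op}), and for $h=k$ the argument is unchanged because $\sigma$-stability gives $k^{\A^\delta}=k^\sigma$. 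If instead $\alpha_1$ is a PIA term, all of its branches are SMP and $\alpha_1=h(\alpha_1^{(1)},\ldots)$ with $h$ multiplicative and each $\alpha_1^{(i)}$ again a PIA term; here one invokes the key Lemma~\ref{lem:additive:difficult}, the regular strengthening of \cite[Lemma 5.8]{GeNaVe05}--\cite[Lemma 5.9]{GeNaVe05}, to conclude that the composite stays UC.

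I expect the main obstacle to be precisely the PIA regime of Claim~(b), i.e.\ establishing Lemma~\ref{lem:additive:difficult}: \cite[Lemma 5.8]{GeNaVe05} relies on a map being \emph{meet-preserving}, whereas an SMP connective such as the unary $g$ is only \emph{multiplicative} and need not preserve the empty meet, so one must prove the SUC-type statement under this weaker hypothesis; and, simultaneously, one must absorb the genuinely non-smooth non-unary connectives, which is possible only because the canonical extension of a DLR was defined (Definition~\ref{def:canextdlr}) so that $k$ is $\sigma$-stable and $l$ is $\pi$-stable. Once that lemma is available, the induction for Claims~(b) and~(c) closes, Claim~(a) provides the remaining ingredient, and the scheme of Theorem~\ref{thm:canonicity:Sahlqvist} applies \emph{mutatis mutandis}, proving the theorem.
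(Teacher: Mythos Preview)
Your proposal is correct and follows exactly the paper's route: leave the first (Ackermann-type) step unchanged and replace \cite[Lemmas 5.5 and 5.10]{GeNaVe05} by their regular analogues, which the paper proves as Lemmas~\ref{lem:additive:easy} and~\ref{lem:additive:difficult}; the SAC regime uses Corollary~\ref{lem:gen:Scott} (additivity) and the SMP regime uses the multiplicative SUC result, just as you outline.

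One reference slip to fix: in the PIA regime of Claim~(b) you write ``one invokes the key Lemma~\ref{lem:additive:difficult}, the regular strengthening of \cite[Lemma 5.8]{GeNaVe05}--\cite[Lemma 5.9]{GeNaVe05}'', but Lemma~\ref{lem:additive:difficult} \emph{is} your Claims~(b)/(c), so citing it there is circular. What you actually need for that step are Lemma~\ref{thm:mulsuc} (multiplicative $\Rightarrow$ $f^\sigma$ is SUC) and Corollary~\ref{cor: mult compositional}, which are the true regular strengthenings of \cite[Theorem 5.8, Corollary 5.9]{GeNaVe05}; with that correction your induction closes exactly as in the paper.
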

\begin{proof}(Sketch)
The proof is similar to the proof of Theorem \ref{thm:canonicity:Sahlqvist}. All the steps in the proof of Theorem \ref{thm:canonicity:Sahlqvist} remain the same except \cite[Lemma 5.5 and 5.10]{GeNaVe05}, the analogues of which we prove below in Lemma \ref{lem:additive:easy} and Lemma \ref{lem:additive:difficult} respectively.
\end{proof}

\begin{lemma}\label{lem:additive:easy}
Every uniform DLR-term is both $\sigma$-contracting and $\pi$-expanding.
\end{lemma}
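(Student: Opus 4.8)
The plan is to prove the statement by induction on the structure of a uniform DLR-term $s$, showing simultaneously that $s^{\ca}\leq (s^{\A})^{\sigma}$ (\emph{$\sigma$-contracting}) and $s^{\ca}\geq (s^{\A})^{\pi}$ (\emph{$\pi$-expanding}). Here $s^{\ca}$ is, by definition of the canonical extension of a DLR, the composition of the extensions $f^\sigma, g^\sigma, k^\sigma, l^\pi$ (and of $\wedge^\sigma=\wedge$, $\vee^\sigma=\vee$, since meet and join are smooth on a perfect distributive lattice) corresponding to the connectives occurring in $s$, whereas $(s^{\A})^{\sigma}$ (resp. $(s^{\A})^{\pi}$) is the single $\sigma$- (resp. $\pi$-) extension of the whole term function. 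The two statements are order-dual to one another via the passage $\A\mapsto\A^\partial$ together with the identities $(t^\partial)^\sigma=(t^\pi)^\partial$ and $(t^\partial)^\pi=(t^\sigma)^\partial$ recorded earlier, so it suffices to carry out the argument for $\sigma$-contracting; I would remark on this at the outset and then only treat that half.

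First I would dispose of the base cases: if $s$ is a variable $p$, a constant $\top$ or $\bot$, then $s^{\ca}=(s^\A)^\sigma=(s^\A)^\pi$ trivially. For the inductive step, write $s=h(s_1,\dots,s_m)$ where $h\in\{\wedge,\vee,f,g,k,l\}$ and each $s_i$ is again uniform (uniformity is inherited by subterms). The key general tool is the composition lemma: since $s^{\ca}=h^{\lambda}\big(s_1^{\ca},\dots,s_m^{\ca}\big)$ where $\lambda=\pi$ if $h=l$ and $\lambda=\sigma$ otherwise, and since by the induction hypothesis each $s_i^{\ca}\leq (s_i^\A)^\sigma$, monotonicity of $h^\lambda$ reduces the problem to comparing $h^\lambda\big((s_1^\A)^\sigma,\dots,(s_m^\A)^\sigma\big)$ with $\big(h(s_1^\A,\dots,s_m^\A)\big)^\sigma$. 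For $h\in\{\wedge,\vee,f,k\}$ — all of which are additive (modulo order type) — this is exactly the content of Corollary \ref{lem:gen:Scott}(i), which gives $h^\sigma\big((s_1^\A)^\sigma,\dots\big)\leq\big(h(s_1^\A,\dots)\big)^\sigma$, hence $s^{\ca}\leq(s^\A)^\sigma$ as desired. For $h=g$ (unary multiplicative), $g$ is smooth by Lemma \ref{lem:smooth}, so $g^\sigma=g^\pi$, and one uses the dual of the composition estimate: $g^\sigma\big((s_1^\A)^\sigma\big)\leq g^\pi\big((s_1^\A)^\pi\big)$? — no; here I would instead invoke that for a unary monotone outer map $g$, $(g\circ t)^\sigma = g^\sigma\circ t^\sigma$ whenever $t$ is appropriately continuous, but more cleanly: since $g$ is smooth and multiplicative, $g^\sigma$ is completely multiplicative (Lemma \ref{lem:complete:additive}) hence in particular monotone, and the needed inequality $g^\sigma((s_1^\A)^\sigma)\leq (g(s_1^\A))^\sigma$ follows from the general fact $(g f)^\sigma \le g^\sigma f^\sigma$ read with the $\sigma$-smoothness of $g$. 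The genuinely new case is $h=l$: here $s^{\ca}=l^\pi\big(s_1^{\ca},\dots,s_n^{\ca}\big)$ and $l$ is $\pi$-\emph{stable} by definition of $\A^\delta$, i.e. $l^{\A^\delta}=(l^\A)^\pi$ as an $n$-ary map on $\ca$. Using the induction hypothesis on each $s_i$ (both halves: $s_i^{\ca}\leq(s_i^\A)^\sigma$ and $s_i^{\ca}\geq (s_i^\A)^\pi$) together with the uniformity of $s$ — which forces each $s_i$ to be uniform, so that, inductively, $s_i$ is \emph{both} $\sigma$-contracting and $\pi$-expanding, and one can feed the $\pi$-expanding bound into the coordinate where it is needed — plus monotonicity/antitonicity in each coordinate according to the order type, the stability of $l$ converts $l^\pi(s_1^{\ca},\dots)$ into $(l(s_1^\A,\dots))^\sigma$ via the chain $l^\pi(\dots)\le l^\pi((s_1^\A)^{?},\dots)=(l^\A)^\pi(\dots)\le$ \dots; this is where the simultaneous induction on both properties is essential.

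The main obstacle I anticipate is precisely this $l$-case bookkeeping: because $l$ is non-unary and possibly non-smooth, one cannot treat it the way $f$ and $g$ are treated, and one must be careful about which extension ($\sigma$ or $\pi$) of each argument subterm is needed in each coordinate, and in which direction the coordinatewise monotonicity points (governed by the order type $\eta$ of $l$). The clean way to organize this is to prove the two statements ($\sigma$-contracting and $\pi$-expanding) \emph{simultaneously} by a single induction, so that in the $l$-coordinates where $l$ is order-reversing the $\pi$-expanding hypothesis on the subterm supplies exactly the bound that the $\sigma$-contracting conclusion for $s$ requires; the $\pi$-stability of $l$ then does the rest, since it lets us replace the composite $l^\pi(\,\cdot\,)$ by the honest $\pi$-extension $(l^\A)^\pi$ of the whole subterm $l(s_1^\A,\dots,s_n^\A)$ without any inequality being lost. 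A secondary point to get right is the treatment of $+\vee$ and $-\wedge$ (join as a non-unary operation), which — as emphasized in Remark \ref{rem:meet preserving not dual op} — is additive but not an operator; this is handled uniformly by Corollary \ref{lem:gen:Scott} rather than by any appeal to operator-hood, exactly as in the emended proof of the Sahlqvist case. Once these cases are in place, the induction closes and the lemma follows.
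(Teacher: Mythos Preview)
Your proposal has the definitions of $\sigma$-contracting and $\pi$-expanding reversed. By Definition~\ref{def:stable}, $\sigma$-\emph{contracting} means $\alpha^{\ca}\geq(\alpha^\A)^\sigma$ and $\pi$-\emph{expanding} means $\alpha^{\ca}\leq(\alpha^\A)^\pi$; you have written the opposite inequalities in both cases. This is not a harmless relabelling: the inequalities you are actually trying to establish, namely $s^{\ca}\leq(s^\A)^\sigma$ and $s^{\ca}\geq(s^\A)^\pi$, are the content of Lemma~\ref{lem:additive:difficult} ($\sigma$-expanding / $\pi$-contracting), and that lemma requires the \emph{Sahlqvist} shape, not mere uniformity. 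For a uniform but non-Sahlqvist term such as $g(f(p))$ (root $+g$ SMP, then $+f$ SAC --- wrong order for an excellent branch), the inequality $g^\sigma f^\sigma\leq(gf)^\sigma$ can genuinely fail, so the statement you set out to prove is false in general.

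Correspondingly, the tool you reach for is the wrong one. Corollary~\ref{lem:gen:Scott}(i) gives $g^\sigma(\overline{f^\sigma})\leq(g\overline{f})^\sigma$ for \emph{additive} outer $g$; this is exactly what drives Lemma~\ref{lem:additive:difficult}. For the present lemma the relevant fact is Ribeiro's inequality (Theorem~\ref{thm:ribiero}), $(gf)^\sigma\leq g^\sigma f^\sigma$, which holds for \emph{arbitrary} monotone maps and therefore applies uniformly to every connective regardless of whether it is additive or multiplicative. The paper's argument runs: $(s^\A)^\sigma=(h^\A(\overline{\gamma^\A}))^\sigma\leq (h^\A)^\sigma(\overline{(\gamma^\A)^\sigma})\leq (h^\A)^\sigma(\overline{\gamma^{\ca}})\leq h^{\ca}(\overline{\gamma^{\ca}})=s^{\ca}$, where the first step is Ribeiro, the second is the induction hypothesis, and the last is either an equality (for $h\in\{f,g,k,\wedge,\vee\}$, since $h^{\ca}=h^\sigma$) or uses $l^\sigma\leq l^\pi=l^{\ca}$. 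Your discussion of the $g$-case exposes the problem clearly: you need $g^\sigma((s_1^\A)^\sigma)\leq(g\,s_1^\A)^\sigma$, but ``the general fact $(gf)^\sigma\leq g^\sigma f^\sigma$'' points the other way, and smoothness of $g$ does not reverse it. Once you correct the direction of the target inequalities and switch from Corollary~\ref{lem:gen:Scott} to Theorem~\ref{thm:ribiero}, the $l$-case also becomes straightforward (no delicate coordinatewise bookkeeping is needed), and the simultaneous-induction subtlety you anticipate disappears.
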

\begin{proof}
Let $\gamma$ be a uniform DLR-term. We show the lemma by induction on the complexity of $\gamma$. The cases involving logical connectives different from $k$ and $l$ are taken care of by \cite[Lemma 5.5]{GeNaVe05}. As to the remaining cases, let $\gamma=h(\gamma_1,\ldots,\gamma_n)$ such that $h$ is either $k$ or $l$. By the induction hypothesis, $\gamma_1$,$\ldots$, $\gamma_n$ are $\sigma$-contracting and $\pi$-expanding. Then the following chain of inequalities holds:
\begin{center}
\begin{tabular}{llr}
$(\gamma^{\A})^{\sigma}$&$=((h^\A(\gamma_1^\A,\ldots,\gamma_n^\A))^{\sigma}$ &\\

&$\leq(h^{\A})^\sigma(\gamma_1^\A,\ldots,\gamma_n^\A)^\sigma$ & (Theorem \ref{thm:ribiero})\\
&$=(h^{\A})^\sigma((\gamma_1^\A)^\sigma,\ldots,(\gamma_n^\A)^\sigma)$ & (Definition \ref{def:canonical:extension:maps})\\
&$\leq (h^{\A})^\sigma(\gamma_1^{\ca},\ldots,\gamma_n^{\ca})$& (induction hypothesis)\\

&$\leq h^{\ca}(\gamma_1^{\ca},\ldots,\gamma_n^{\ca})=\gamma^{\ca}$& (Definition \ref{def:canextdlr})\\
\end{tabular}
\end{center}
The last inequality reduces to an equality if $h = k$. When $h = l$, we also use the fact that $l^\sigma\leq l^\pi$ (see page \pageref{fsigmaleqfpi}).
The proof that $\gamma$ is $\pi$-expanding is similar.
\end{proof}
\begin{lemma}[\cite{HBMoL}, Proposition 117.5]\label{thm:mulsuc}

If $f:\A\rightarrow\B$ is multiplicative, then $f^\sigma$ is SUC.

\end{lemma}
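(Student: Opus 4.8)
The plan is to adapt the proof of Lemma~\ref{lem:meet-pres_suc} (that is, \cite[Theorem 5.8]{GeNaVe05}), which yields the same conclusion under the stronger hypothesis that $f$ is meet-preserving, and to pinpoint the single spot where preservation of the \emph{empty} meet $\top$ is really used, so that it can be bypassed using only preservation of non-empty finite meets.

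First I would fix $u\in\A^\delta$ and $q\in K(\B^\delta)$ with $q\leq f^\sigma(u)$, and aim to produce $x\in K(\ca)$ with $x\leq u$ and $q\leq f^\sigma(x)$. Since $f^\sigma\leq f^\pi$, we have $q\leq f^\pi(u)=\bigwedge\{\bigvee\{f(a):y\geq a\in\A\}:u\leq y\in O(\ca)\}$, so $q\leq\bigvee\{f(a):a\in\A,\ a\leq y\}$ for every open $y\geq u$. Writing $q=\bigwedge S$ with $S\subseteq\B$ (possible since $q$ is closed) and invoking compactness, for each such $y$ I obtain a finite $F\subseteq S$ and finitely many $a_1,\dots,a_k\in\A$ with $a_i\leq y$ and $\bigwedge F\leq\bigvee_{i\leq k}f(a_i)$; setting $a_y:=\bigvee_{i\leq k}a_i\in\A$ and using monotonicity of $f$ gives $a_y\leq y$ and $q\leq f(a_y)$. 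Thus I obtain a choice of elements $a_y\in\A$, one for each open $y\geq u$, satisfying $a_y\leq y$ and $q\leq f(a_y)$.

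Then I would set $x:=\bigwedge\{a_y:y\in O(\ca),\ y\geq u\}$, a meet of elements of $\A$, hence $x\in K(\ca)$. Since $x\leq a_y\leq y$ for every open $y\geq u$, and since denseness gives $u=\bigwedge\{y\in O(\ca):y\geq u\}$, we get $x\leq u$. To conclude, I would use that for $x\in K(\ca)$ the definition of the $\sigma$-extension yields $f^\sigma(x)=\bigwedge\{f(a):a\in\A,\ x\leq a\}$, so it suffices to show $q\leq f(a)$ for every $a\in\A$ with $x\leq a$. Given such an $a$, compactness applied to $\bigwedge\{a_y:y\in O(\ca),\ y\geq u\}\leq a$ yields finitely many $a_{y_1},\dots,a_{y_n}\in\A$ with $\bigwedge_{i\leq n}a_{y_i}\leq a$, and then multiplicativity of $f$ gives $f(a)\geq f\bigl(\bigwedge_{i\leq n}a_{y_i}\bigr)=\bigwedge_{i\leq n}f(a_{y_i})\geq q$.

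The main obstacle, and the only real difference from the meet-preserving case, is the possibly degenerate finite subcovers returned by compactness: the last subcover may be empty, forcing $a=\top$, and multiplicativity says nothing about $f(\top)$. I would deal with this by always adjoining to the subcover the witness $a_\top$ attached to $\top\in O(\ca)$ (which satisfies $a_\top\leq\top$ and $q\leq f(a_\top)$); since $a_\top\wedge\bigwedge_{i\leq n}a_{y_i}$ is still $\leq a$, this makes every finite meet in play non-empty, which is exactly what lets preservation of non-empty finite meets do the work. (The same trick, or simply reading an empty join as $\bot$ and taking $a_y=\bot$, disposes of the degenerate case in the middle step, which anyway only arises when $q=\bot$.) In particular this argument needs neither smoothness of $f$ (Lemma~\ref{lem:smooth}) nor Lemma~\ref{lem:complete:additive}, though Step~1 could alternatively be phrased using $f^\sigma=f^\pi$.
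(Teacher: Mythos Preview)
The paper does not give its own proof of this lemma: it is simply cited as \cite[Proposition 117.5]{HBMoL}, and the surrounding text only remarks that Corollary~\ref{cor: mult compositional} is proved ``very similar[ly] to that of Corollary~\ref{cor:suc:GNV}, us[ing] Lemma~\ref{thm:mulsuc} where Lemma~\ref{lem:meet-pres_suc} was used.'' So there is nothing in the paper to compare your argument against at the level of detail.

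That said, your proof is correct and is exactly the natural adaptation the paper's discussion suggests. The strategy of passing through $f^\pi$ to produce, for each open $y\geq u$, a witness $a_y\in\A$ with $a_y\leq y$ and $q\leq f(a_y)$, then taking $x=\bigwedge_y a_y\in K(\ca)$ and invoking compactness plus multiplicativity to show $q\leq f^\sigma(x)$, is the standard line (cf.\ the meet-preserving version in \cite[Theorem~5.8]{GeNaVe05}). Your identification of the single point where the empty meet could intrude, and the fix of adjoining the always-available witness $a_\top$ so that every finite meet used is non-empty, is precisely what distinguishes the multiplicative case from the meet-preserving one. The handling of the degenerate case $q=\bot$ in Step~1 is also fine. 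One cosmetic remark: since $f$ is unary and multiplicative, it is smooth (Lemma~\ref{lem:smooth}), so you could equally well start from $q\leq f^\sigma(u)=f^\pi(u)$ directly rather than via $f^\sigma\leq f^\pi$; but as you note, this is not needed for the argument.
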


\begin{cor}
\label{cor: mult compositional}
If $f:\A^{\delta}\to\B^{\delta}$ is multiplicative and $g:\B^{\delta}\to\C^{\delta}$ is order-preserving, then $g^{\sigma}f^{\sigma}\leq (gf)^{\sigma}$.

\end{cor}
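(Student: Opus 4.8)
The plan is to reprise the proof of Corollary \ref{cor:suc:GNV} almost verbatim, replacing the hypothesis that $f$ is meet-preserving with the hypothesis that $f$ is multiplicative, and invoking Lemma \ref{thm:mulsuc} (multiplicative maps have SUC $\sigma$-extension) in place of Lemma \ref{lem:meet-pres_suc}. By Lemma \ref{thm:largestUC:DML}, $(gf)^\sigma$ is the \emph{largest} monotone UC extension of $gf$, so it suffices to show that $g^\sigma f^\sigma$ is a monotone UC extension of $gf$.

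First I would check that $g^\sigma f^\sigma$ is an extension of $gf$ and is monotone. Monotonicity is immediate, being a composition of monotone maps. For the extension property, recall that $f^\sigma$ and $g^\sigma$ agree with $f$ and $g$ on the original algebras; hence for $a$ in the base algebra we have $f^\sigma(a)=f(a)$, which lies in the base of $\B$, and therefore $g^\sigma(f^\sigma(a))=g^\sigma(f(a))=g(f(a))$, as required.

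The substance is upper continuity of $g^\sigma f^\sigma$. Fix $u$ in the domain and $q\in J^\infty(\C^\delta)$ with $q\leq g^\sigma(f^\sigma(u))$. Since $g^\sigma$ is always UC (Lemma \ref{thm:largestUC:DML}), and $f^\sigma(u)$ lies in $\B^\delta$, there is a closed element $w\in K(\B^\delta)$ with $w\leq f^\sigma(u)$ and $q\leq g^\sigma(w)$. Now $f$ is multiplicative, so $f^\sigma$ is SUC by Lemma \ref{thm:mulsuc}; applying SUC to the closed element $w\leq f^\sigma(u)$ produces $x\in K(\A^\delta)$ with $x\leq u$ and $w\leq f^\sigma(x)$. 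Monotonicity of $g^\sigma$ then yields $q\leq g^\sigma(w)\leq g^\sigma(f^\sigma(x))$, which is exactly the witness required for UC of $g^\sigma f^\sigma$. This gives $g^\sigma f^\sigma\leq (gf)^\sigma$.

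I do not expect a genuine obstacle here: the argument is a direct specialization of the one behind Corollary \ref{cor:suc:GNV}. The only point deserving care is the role of SUC: it is precisely the \emph{strong} form of upper continuity of $f^\sigma$ that lets us transport the closed witness $w\in K(\B^\delta)$ produced by UC of $g^\sigma$ down to a closed witness $x\in K(\A^\delta)$. Plain upper continuity of $f^\sigma$ would only handle witnesses $w\in J^\infty(\B^\delta)$, which in general are not available; this is exactly why multiplicativity of $f$ (the SMP classification of the corresponding nodes in Table \ref{Join:and:Meet:Friendly:Table}) cannot be weakened in this composition lemma.
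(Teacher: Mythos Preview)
Your proposal is correct and follows precisely the route the paper indicates: the paper's own proof is simply the sentence ``very similar to that of Corollary \ref{cor:suc:GNV}, and uses Lemma \ref{thm:mulsuc} where Lemma \ref{lem:meet-pres_suc} was used,'' with details omitted. You have filled in exactly those details, using UC of $g^\sigma$ to produce a closed witness $w$ and then SUC of $f^\sigma$ (via Lemma \ref{thm:mulsuc}) to pull it back to a closed $x$, which is the intended argument.
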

The proof of the corollary above is very similar to that of Corollary \ref{cor:suc:GNV}, and uses Lemma \ref{thm:mulsuc} where Lemma \ref{lem:meet-pres_suc} was used. The details are omitted.

\begin{lemma}\label{lem:additive:difficult}
Every $\epsilon$-uniform DLR-Sahlqvist term is $\sigma$-expanding, and every $\epsilon^\partial$-uniform DLR-Sahlqvist term is $\pi$-contracting.
\end{lemma}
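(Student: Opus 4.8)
The plan is to prove the $\sigma$-expanding half by structural induction on the $\epsilon$-uniform DLR-Sahlqvist term $\alpha$, and to obtain the $\pi$-contracting half for $\epsilon^\partial$-uniform terms by the entirely order-dual argument (interchanging $\sigma/\pi$, $K/O$, UC/LC, SUC/SLC, additive/multiplicative, and invoking the order duals of Lemma \ref{thm:operator:Scott}, Corollary \ref{lem:gen:Scott}, Lemma \ref{thm:mulsuc} and Corollary \ref{cor: mult compositional}). So I fix $\alpha$ with $+\alpha$ an $\epsilon$-uniform DLR-Sahlqvist tree and aim at $\alpha^{\ca}\leq(\alpha^\A)^\sigma$. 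The first move is a reduction: $\alpha^{\ca}$ is a composition of $\sigma$-extensions of $f,g,k$ and of the lattice operations, together with the $\pi$-extension of $l$ (Definition \ref{def:canextdlr}); hence it is monotone and restricts to $\alpha^\A$ on $\A$, so it is a monotone extension of $\alpha^\A$ to $\ca$. Since by Lemma \ref{thm:largestUC:DML}(i) the map $(\alpha^\A)^\sigma$ is the \emph{largest} monotone UC extension of $\alpha^\A$, it suffices to prove that $\alpha^{\ca}$ is UC. The whole point is then to realise UC-ness of $\alpha^{\ca}$ via the two sufficient conditions isolated in Subsection \ref{subsec:3:1:2} (outer map weakly $m$-Scott continuous, or inner map SUC), now phrased for additive rather than operator connectives.

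In the induction, the base cases (variables and constants) are $\sigma$-stable. For the inductive step I look at the root node of $+\alpha$: lying on a (necessarily $\epsilon$-critical) branch of an $\epsilon$-uniform DLR-Sahlqvist tree, it sits on an excellent branch and is therefore SAC or SMP. If it is SAC, say $\alpha=h(\alpha_1,\dots,\alpha_m)$ with $h$ interpreted additively modulo its order type, then deleting it from the excellent branches keeps them excellent, so each signed immediate subterm is again $\epsilon$-uniform DLR-Sahlqvist, of smaller complexity, and by the inductive hypothesis (using the order-dual formulation in the $\partial$-coordinates of $h$) is $\sigma$-expanding; moreover $h^{\ca}=(h^\A)^\sigma$ (for $f$ and the lattice operations because they are smooth, for $k$ exactly by the $\sigma$-stability built into Definition \ref{def:canextdlr}, and dually, via $\pi$-stability, for negative occurrences of $l$), and this map is completely additive modulo order type by Lemma \ref{lem:complete:additive}. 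Then monotonicity and Corollary \ref{lem:gen:Scott} (or its order dual) give
\[\alpha^{\ca}=h^{\ca}(\alpha_1^{\ca},\dots)\leq h^{\ca}((\alpha_1^\A)^\sigma,\dots)=(h^\A)^\sigma((\alpha_1^\A)^\sigma,\dots)\leq(h^\A(\alpha_1^\A,\dots))^\sigma=(\alpha^\A)^\sigma.\]
The reason Corollary \ref{cor:scott} (``outer map an operator'') is insufficient here, and Corollary \ref{lem:gen:Scott} is needed, is precisely that $\vee$, $\wedge$ and $k$ are additive but not operators.

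If the root is SMP but not SAC (i.e.\ $+g$ or $-f$), then it lies in the $P_1$-segment of every excellent branch of $+\alpha$, so \emph{every} node of $+\alpha$ is SMP; consequently $\alpha^\A$ is p-multiplicative (SMP connectives are p-multiplicative and this is preserved under composition), hence multiplicative coordinate-wise, and likewise each immediate subterm has multiplicative interpretation and, by the inductive hypothesis, is $\sigma$-expanding. Writing $\alpha=h(\alpha_1,\dots)$ with $h^{\ca}=(h^\A)^\sigma$, the desired $\alpha^{\ca}\leq(\alpha^\A)^\sigma$ follows from monotonicity and Corollary \ref{cor: mult compositional}, applied with the (multiplicative) interpretations of the subterms as inner maps. (Equivalently, I would first show by a sub-induction that every entirely-SMP term has an SUC interpretation on $\ca$ --- the identity is SUC, $(g^\A)^\sigma$ is SUC by Lemma \ref{thm:mulsuc}, and SUC is closed under composition and, since $K(\ca)$ is closed under finite joins by complete distributivity, under pointwise meets --- and then conclude via SUC $\Rightarrow$ UC $\Rightarrow$ $\leq(\alpha^\A)^\sigma$.) A root that is simultaneously SAC and SMP ($+\wedge$ or $-\vee$) is handled by treating it as SAC; and $+l$, $-k$ simply cannot occur, being neither SAC nor SMP and hence unable to lie on an excellent branch.

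The hard part, and the reason this goes beyond \cite{GeNaVe05}, is the treatment of the non-unary $k$ and $l$: unlike the unary $f,g$ they need not be smooth, so the interpretations $k^\sigma$, $l^\pi$ fixed in Definition \ref{def:canextdlr} are genuine choices, and the induction closes only because these are the ``compatible'' ones --- a positive $k$ occupies the additive/outer (SAC) role, where its $\sigma$-extension is completely additive (Lemma \ref{lem:complete:additive}(i)) hence weakly $m$-Scott continuous, and dually $l$ through $\pi$-stability. This is exactly the sense in which, in the regular non-unary setting, stability takes over the role played by smoothness in \cite{GeNaVe05}; the remaining bookkeeping (propagation of order types and of $+/-$ polarities, and the order-dual mirror for $\pi$-contraction) is routine.
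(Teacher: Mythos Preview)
Your proposal is correct and follows essentially the same approach as the paper: structural induction, with SAC root nodes handled via Corollary~\ref{lem:gen:Scott} (additive outer map) and SMP-only root nodes via Corollary~\ref{cor: mult compositional} (multiplicative inner map), exploiting the $\sigma$-stability of $k$ and $\pi$-stability of $l$ built into Definition~\ref{def:canextdlr}. One clarification: the $\sigma$-expanding and $\pi$-contracting halves cannot be proved separately and then related by order-duality as your opening sentence suggests; they must be proved by \emph{simultaneous} induction, since an SAC root with a $\partial$-coordinate (e.g.\ $+k$ with $k$ of order-type $(\partial,1)$) requires the $\pi$-contracting inductive hypothesis for the corresponding subterm --- your parenthetical ``using the order-dual formulation in the $\partial$-coordinates of $h$'' shows you are aware of this, but the phrasing could be tightened.
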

\begin{proof}Let $\alpha$ be an $\epsilon$-uniform DLR-Sahlqvist term. The proof is by induction on the complexity of $\alpha$. The cases when $\alpha$ is a propositional variable or a constant are trivial. Let $\alpha = k(\alpha_1,\alpha_2)$, with $k$ being a binary additive map of order-type $(\partial, 1)$. Then, by Definition \ref{def:rsahlqvist}, $\alpha_1$ is $\epsilon^\partial$-uniform DLR-Sahlqvist and $\alpha_2$ is $\epsilon$-uniform DLR-Sahlqvist. By induction hypothesis, $\alpha_1$ is $\pi$-contracting and $\alpha_2$ is $\sigma$-expanding. Moreover, by Definition \ref{def:canextdlr}, $k^{\ca}=(k^{\A})^{\sigma}$, i.e.,\ $k$ is $\sigma$-stable. Therefore, we have the following chain of inequalities:

\begin{center}

\begin{tabular}{r l l}
$(\alpha^{\A})^{\sigma}$ & $\geq (k^{\A})^{\sigma}(\alpha_1^{\A},\alpha_2^{\A})^{\sigma}$ & (Corollary \ref{lem:gen:Scott})\\
& $=k^{\ca}(\alpha_1^{\A},\alpha_2^{\A})^{\sigma}$& ($k$ is $\sigma$-stable)\\
& $=k^{\ca}((\alpha_1^{\A})^{\pi},(\alpha_2^{\A})^{\sigma})$ & (Definition \ref{def:canonical:extension:maps}, and $k$ is $(\partial, 1)$-additive)\\
& $\geq k^{\ca}(\alpha_1^{\ca},\alpha_2^{\ca})$ & (induction hypothesis)\\
& $=\alpha^{\ca}.$&\\
\end{tabular}
\end{center}

Let $\alpha = l(\alpha_1,\alpha_2)$ (notice that this case would only appear when $\alpha$ is $\epsilon^{\partial}$-uniform DLR-Sahlqvist), with $l$ being a binary multiplicative map of order-type $(\partial, 1)$. Then, by Definition \ref{def:rsahlqvist}, $\alpha_1$ is $\epsilon$-uniform DLR-Sahlqvist and $\alpha_2$ is $\epsilon^\partial$-uniform DLR-Sahlqvist. By induction hypothesis, $\alpha_1$ is $\sigma$-expanding and $\alpha_2$ is $\pi$-contracting. Moreover, by Definition \ref{def:canextdlr}, $l^{\ca}=(l^{\A})^{\pi}$, i.e.,\ $l$ is $\pi$-stable. Therefore, the following chain of inequal  ities holds:

\begin{center}

\begin{tabular}{r l l}
$(\alpha^{\A})^{\pi}$ & $\leq (l^{\A})^{\pi}(\alpha_1^{\A},\alpha_2^{\A})^{\pi}$ & (Corollary \ref{lem:gen:Scott})\\
& $=l^{\ca}(\alpha_1^{\A},\alpha_2^{\A})^{\pi}$& ($l$ is $\pi$-stable)\\
& $=l^{\ca}((\alpha_1^{\A})^{\sigma},(\alpha_2^{\A})^{\pi})$ & (Definition \ref{def:canonical:extension:maps}, and $l$ is $(\partial, 1)$-multiplicative)\\
& $\leq l^{\ca}(\alpha_1^{\ca},\alpha_2^{\ca})$ & (induction hypothesis)\\
& $=\alpha^{\ca}.$&\\
\end{tabular}
\end{center}

The remaining cases in which the outermost connective is a SAC node are easier and are left to the reader.
Let $\alpha=g(\overline{\beta})$, with $g$ being an SMP $n$-ary map of order type $\epsilon$. Then by Definition \ref{def:rsahlqvist}, $\beta_i$ is a composition of SMP maps if $\epsilon_i = 1$ or of dual SMP maps if $\epsilon_i = \partial$. Hence, the product map $\overline{\beta^\A}$ is multiplicative into $\A^\epsilon$. Therefore the following chain of (in)equalities holds:

\begin{center}
\begin{tabular}{r l l}
$(\alpha^{\A})^{\sigma}$ & $=(g^\A(\overline{\beta^\A}))^\sigma$& \\
& $\geq (g^\A)^\sigma(\overline{\beta^\A})^\sigma$ & (Corollary \ref{cor: mult compositional})\\
& $=g^{\ca}(\overline{\beta^\A})^\sigma$& (Definition \ref{def:canextdlr})\\
& $\geq g^{\ca}(\overline{{\beta}^{\ca}})$ & (induction hypothesis)\\
& $=\alpha^{\ca}.$&
\end{tabular}
\end{center}
\end{proof}

\section*{Part II: Algorithmic correspondence and canonicity}
\addcontentsline{toc}{section}{\underline{Part II: Algorithmic correspondence and canonicity}}
\section{\textsf{ALBA} on regular BDL and HA expansions}
 The theory developed in Part I is not enough to account for the strong completeness and elementarity of Lemmon's logics. Indeed, canonicity alone does not provide any clue as to which first-order correspondent (if any) Lemmon's axioms have.
The present section is aimed at filling this gap. We adapt the calculus for correspondence ALBA (cf.\ \cite{ConPal12}) to the setting of regular modal logic, and we consider both the case in which the propositional base is given by the logic of bounded distributive lattices (BDLs) and by intuitionistic logic, the algebras associated with which are Heyting algebras (HAs). We also illustrate how the adapted calculus---which from now on we refer to as \textsf{ALBA}$^r$---works by means of an example.

The calculus \textsf{ALBA}$^r$ is very closely related to the metacalculus \textsf{ALBA}$^e$ defined in \cite[Section 5.1]{CGPSZ14}. Specifically, in that setting, the original \textsf{ALBA} was augmented with special approximations and adjunction rules for given unary term functions $\pi, \sigma, \lambda, \rho$ such that $\pi: \A\to \A$ and $\lambda: \A^{\partial}\to \A$ are additive and $\sigma: \A\to \A$ and $\rho: \A^{\partial}\to \A$ are multiplicative. The present calculus \textsf{ALBA}$^r$ for the DLR signature coincides with the restriction of \textsf{ALBA}$^e$ to the special case in which each term function $\pi, \sigma, \lambda, \rho$ reduces to one (fresh) connective. This restriction behaves significantly better than \textsf{ALBA}$^e$. Indeed, in \cite[Proposition 29]{CGPSZ14}, the soundness of the special rules was shown with respect to perfect DLEs which are canonical extensions of some DLEs. It was also discussed (cf.\ \cite[Section 9]{CGPSZ14}) that this soundness result cannot be strengthened to arbitrary perfect DLEs, and that this implies that the relativized canonicity result shown via \textsf{ALBA}$^e$ cannot be improved to a relativized correspondence result.
As we will discuss below, all rules of \textsf{ALBA}$^r$ are sound with respect to all perfect DLRs. That is, the limitation preventing \textsf{ALBA}$^e$ from yielding correspondence results does not hold for \textsf{ALBA}$^r$.

The proof of soundness of \textsf{ALBA}$^r$ and the canonicity of each inequality on which \textsf{ALBA}$^r$ succeeds are very similar to the proofs of soundness and relativized canonicity proved in \cite{CGPSZ14}, so we will only expand on the strengthening mentioned above. The HAR setting is very similar and will be discussed as we go along. In Part III, the algorithm \textsf{ALBA}$^r$ will be successfully run on Lemmon's axioms, and their strong completeness and elementarity will follow from the results of the presnt section.

In what follows, we define \textsf{ALBA}$^r$ for DLR$_{\eta}$ and HAR$_{\eta}$ simultaneously and from first principles. We use the symbol $\mathcal{L}$ to refer indifferently to DLR or to HAR.

\subsection{The expanded language $\mathcal{L}^{+}$}
\label{ssec:expanded language}
Analogously to what has been done in \cite{ConPal12}, we need to introduce the expanded language $\mathcal{L}^+_\eta$ the calculus \textsf{ALBA}$^r$ will manipulate. The language $\mathcal{L}^+_\eta$ will be shaped on the perfect $\mathcal{L}$-algebras appropriate to each $\eta$. Indeed, as usual, $\mathcal{L}^+_\eta$ will be built on three pairwise disjoint sets of variables: proposition variables in $\mathsf{AtProp}$ (denoted by $p, q, r$), {\em nominal} variables in $\mathsf{Nom}$ (denoted by $\nomi, \nomj, \nomk$) and {\em conominal} variables in $\mathsf{CNom}$ (denoted by $\cnomm, \cnomn, \cnomo$). Nominals and conominals are to be interpreted as completely join-irreducible and completely meet-irreducible elements of perfect DLRs.
As discussed in \cite{CGPSZ14}, every additive map $f: \A\to \B$ between perfect BDLs can be associated with its {\em normalization}, that is a map\footnote{When $\A = (\A')^\partial$, we will rather use the symbols ${\lhd}_f$ and ${\rhd}_g$ to respectively denote the normalizations of $f$ and $g$ as maps $\A'\to \B$.} $\Diamond_f: \A\to \B$ such that, for every $u\in \A$,
\[\Diamond_f u = \bigvee\{j\in J^{\infty}(\B)\mid j\leq f(i) \mbox{ for some } i\in J^{\infty}(\A) \mbox{ such that } i\leq u\}.\]

Order-dually, every multiplicative map $g: \A\to \B$ between perfect BDLs can be associated with its {\em normalization}, that is a map $\Box_g: \A\to \B$ such that, for every $u\in \A$,
\[\Box_g u = \bigwedge\{n\in M^{\infty}(\B)\mid g(m)\leq n \mbox{ for some } m\in M^{\infty}(\A) \mbox{ such that } u\leq m\}.\]

By definition, the normalizations of $f$ and $g$ are completely join-preserving and completely meet-preserving respectively. Since perfect lattices are complete, this implies that the normalizations are adjoints, i.e., there exist maps\footnote{Using the alternative notation, there exist maps ${\blacktriangleleft}_f, {\blacktriangleright}_g: \B\to \A$ such that for every $u\in \A$ and $v\in \B$,

\[{\lhd}_f u\leq v \ \mbox{ iff }\ {\blacktriangleleft}_f v\leq u \quad \quad \quad v\leq {\triangleright}_g u \ \mbox{ iff }\ u\leq {\blacktriangleright}_g v \] } $\blacksquare_f, \Diamondblack_g: \B\to \A$ such that for every $u\in \A$ and $v\in \B$,

\[\Diamond_f u\leq v \ \mbox{ iff }\ u\leq \blacksquare_f v \quad \quad \quad \Diamondblack_g v\leq u \ \mbox{ iff }\ v\leq \Box_g u.\]

The discussion above motivates the following recursive definition of the formulas in the expanded language $\mathcal{L}^+_\eta$ (which is the same for both DLR and HAR):
\begin{equation}\label{def:language}
\varphi ::= \; \bot \mid \top \mid p \mid \nomj \mid \cnomm \mid
\varphi \vee \varphi \mid \varphi \wedge \varphi \mid
\varphi - \varphi \mid
\varphi \rightarrow \varphi \mid
f(\varphi) \mid g(\varphi) \mid
\end{equation}
\begin{equation}
k(\overline{\varphi}) \mid l(\overline{\varphi}) \mid\begin{cases} \blacksquare_f \varphi & \mbox{ if }\eta_f = 1\\
{\blacktriangleleft}_f\varphi & \mbox{ if } \eta_f = \partial \end{cases}
 \mid \begin{cases} \Diamondblack_g \varphi & \mbox{ if }\eta_g = 1\\
{\blacktriangleright}_g\varphi & \mbox{ if } \eta_g = \partial \end{cases}
\end{equation}
where $p\in \mathsf{AtProp}$, $\nomj \in \mathsf{Nom}$, $\cnomm \in \mathsf{CNom}$ and $\eta\in\{1,\partial\}^{2}$ is a 2-order-type.

The inclusion of the additional propositional connectives $-$ and $\rightarrow$ in the signature above is motivated by the well known fact that perfect DLRs have a natural structure of bi-Heyting algebras.

Recall that a \emph{quasi-inequality} is an expressions of the form $(\varphi_1\leq \psi_1 \&\ldots \& \varphi_n\leq\psi_n) \Rightarrow \varphi\leq \psi$, where the $\varphi$'s and $\psi$'s are terms from $\mathcal{L}^{+}_\eta$. Formulas, inequalities and quasi-inequalities which do not contain any propositional variables are called \emph{pure}. In what follows, we will also use the connective $\parr$ to denote (meta-)disjunction in the context of quasi-inequalities.

The interpretation of the modal operators is the natural one suggested by the notation, and indeed we are using the same symbols to denote both the logical connectives and their algebraic interpretations.

\subsection{The algorithm \textsf{ALBA}$^r$}

In what follows, we illustrate how \textsf{ALBA}$^r$ works, while at the same time we introduce its rules. The proof of the soundness and invertibility of the general rules for the BDL setting is discussed in \cite{ConPal12, CoGhPa14}. The proof of the soundness and invertibility of the rules which are specific to the regular setting follows from the soundness and invertibility of the rules for $\pi, \sigma, \lambda, \rho$ of the metacalculus given in \cite{CGPSZ14}, which is there discussed in detail. We refer the reader to these discussions, and we do not elaborate further on this topic.

\textsf{ALBA}$^r$ manipulates input $\mathcal{L}$-inequalities $\phi\leq \psi$ and proceeds in three stages:

\paragraph{First stage: preprocessing and first approximation.}

\textsf{ALBA}$^r$ preprocesses the input inequality $\phi\leq \psi$ by performing the following steps
exhaustively in the signed generation trees $+\phi$ and $-\psi$:

\begin{enumerate}
\item
\begin{enumerate}
\item Push down, towards variables, occurrences of $+\land$, $+f$ for $\eta_f = 1$, $-g$ for $\eta_g = \partial$, $+k$ for $\epsilon_k(i)=1$, $-l$ for $\epsilon_l(i)=\partial$ by distributing them over nodes labelled with $+\lor$ (in the $i$-th coordinate for $k, l$) which are SAC nodes, and

\item Push down, towards variables, occurrences of $-\lor$, $-g$ for $\eta_g =1$,  $+ f$ for $\eta_f = \partial$, $+k$ for $\epsilon_k(i)=\partial$, $-l$ for $\epsilon_l(i)=1$ by distributing them over nodes labelled with $-\land$ (in the $i$-th coordinate for $k, l$) which are SAC nodes.

\end{enumerate}

\item Apply the splitting rules:

$$\infer{\alpha\leq\beta\ \ \ \alpha\leq\gamma}{\alpha\leq\beta\land\gamma}
\qquad
\infer{\alpha\leq\gamma\ \ \ \beta\leq\gamma}{\alpha\lor\beta\leq\gamma}
$$

\item Apply the monotone and antitone variable-elimination rules:

$$\infer{\alpha(\perp)\leq\beta(\perp)}{\alpha(p)\leq\beta(p)}
\qquad
\infer{\beta(\top)\leq\alpha(\top)}{\beta(p)\leq\alpha(p)}
$$

for $\beta(p)$ positive in $p$ and $\alpha(p)$ negative in $p$.

\end{enumerate}

Let $\mathsf{Preprocess}(\phi\leq\psi)$ be the finite set $\{\phi_i\leq\psi_i\mid 1\leq i\leq n\}$ of inequalities obtained after the exhaustive application of the previous rules. We proceed separately on each of them, and hence, in what follows, we focus only on one element $\phi_i\leq\psi_i$ in $\mathsf{Preprocess}(\phi\leq\psi)$, and we drop the subscript. Next, the following {\em first approximation rule} is applied {\em only once} to every inequality in $\mathsf{Preprocess}(\phi\leq\psi)$:

$$\infer{\nomi_0\leq\phi\ \ \ \psi\leq \cnomm_0}{\phi\leq\psi}
$$

Here, $\nomi_0$ and $\cnomm_0$ are a nominal and a conominal respectively. The first-approximation
step gives rise to systems of inequalities $\{\nomi_0\leq\phi_i, \psi_i\leq \cnomm_0\}$ for each inequality in $\mathsf{Preprocess}(\phi\leq\psi)$. Each such system is called an {\em initial system}, and is now passed on to the reduction-elimination cycle.

\paragraph{Second stage: reduction-elimination cycle.}

The goal of the reduction-elimination cycle is to eliminate all propositional variables from the systems
which it receives from the preprocessing phase. The elimination of each variable is effected by an
application of one of the Ackermann rules given below. In order to apply an Ackermann rule, the
system must have a specific shape. The adjunction, residuation, approximation, and splitting rules are used to transform systems into this shape. The rules of the reduction-elimination cycle, viz.\ the adjunction, residuation, approximation, splitting, and Ackermann rules, will be collectively called the {\em reduction} rules.

\paragraph{Residuation rules. }
\begin{center}
\begin{tabular}{cc}
\AxiomC{$\phi\wedge \psi\leq\chi$}
\doubleLine
\UnaryInfC{$\psi\leq \phi\rightarrow \chi$}
\DisplayProof

&
\AxiomC{$ \phi\leq \psi\vee \chi$}

\UnaryInfC{$\phi - \chi\leq\psi$}
\DisplayProof

\end{tabular}
\end{center}

In the HAR setting, the rule on the left-hand side above is allowed to be executed bottom-to-top.

\paragraph{Adjunction rules.}

\begin{center}
\begin{tabular}{cc}
\AxiomC{$f(\phi)\leq\psi$}
\RightLabel{(if $\eta_f = 1$)}
\UnaryInfC{$f(\bot)\leq \psi\;\;\;\phi\leq\blacksquare_{f}\psi$}
\DisplayProof

&
\AxiomC{$ \phi\leq g(\psi)$}
\RightLabel{(if $\eta_g = 1$)}
\UnaryInfC{$\phi\leq g(\top)\;\;\; \Diamondblack_g\phi\leq\psi$}
\DisplayProof

\end{tabular}
\end{center}

\begin{center}
\begin{tabular}{cc}
\AxiomC{$f(\phi)\leq \psi$}
\RightLabel{(if $\eta_f = \partial$)}
\UnaryInfC{$f(\top)\leq\psi\;\;\; {\blhd} \psi\leq\phi$}
\DisplayProof

& \AxiomC{$\phi\leq g(\psi)$}
\RightLabel{(if $\eta_g = \partial$)}
\UnaryInfC{$\phi\leq g(\bot)\;\;\;\psi\leq {\brhd} \phi$}
\DisplayProof \\

\end{tabular}
\end{center}
In a given system, each of these rules replaces an instance of the upper inequality with the corresponding instances of the two lower inequalities.

The leftmost inequalities in each rule above will be referred to as the \emph{side condition}\label{def:sidecondition}.

\paragraph{Approximation rules.} The following rules are applicable if $\eta_f=\eta_g= 1$:
\begin{center}
\begin{tabular}{cccc}

\AxiomC{$\nomi\leq f(\phi)$}
\UnaryInfC{$[\nomi\leq f(\bot)]\;\;\parr\;\; [\nomj\leq\phi\;\;\;\;\nomi\leq f(\nomj)]$}
\DisplayProof

&
\AxiomC{$ g(\psi)\leq\cnomm$}
\UnaryInfC{$[g(\top)\leq \cnomm]\;\;\parr\;\; [\psi\leq\cnomn\;\;\;\;g(\cnomn)\leq\cnomm]$}
\DisplayProof\\\\

\end{tabular}
\end{center}

If $\eta_f=\eta_g=\partial$, the following approximation rules are applicable:

\begin{center}
\begin{tabular}{cccc}

\AxiomC{$\nomi\leq f(\phi)$}
\UnaryInfC{$[\nomi\leq f(\top)]\;\;\parr\;\; [\phi\leq\cnomm\;\;\;\; \nomi\leq f(\cnomm)]$}
\DisplayProof

& \AxiomC{$g(\psi)\leq\cnomm$}
\UnaryInfC{$[g(\bot)\leq\cnomm]\;\;\parr\;\; [\nomi\leq\psi\;\;\;\;g(\nomi)\leq\cnomm]$}
\DisplayProof \\\\

\end{tabular}
\end{center}
\begin{center}
\begin{tabular}{l}

\AxiomC{$\nomi\leq k(\overline{\phi}_{\epsilon_{k}^+}, \overline{\psi}_{\epsilon_{k}^-})$}

\UnaryInfC{$\bigparr\limits_{P\subseteq\epsilon_{k}^+, N\subseteq\epsilon_{k}^-}(\nomi\leq k(\overline{\nomj}_{P}, \overline{\bot}_{\epsilon_{k}^+\setminus P}, \overline{\cnomm}_{N}, \overline{\top}_{\epsilon_{k}^-\setminus N})\;\;\&\;\;\bigamp\limits_{e\in P}(\nomj_e\leq\phi_e)\;\;\&\;\;\bigamp\limits_{e\in N}(\psi_e\leq\cnomm_e))$}
\DisplayProof\\\\

\AxiomC{$l(\overline{\phi}_{\epsilon_{l}^+}, \overline{\psi}_{\epsilon_{l}^-})\leq\cnomm$}

\UnaryInfC{$\bigparr\limits_{P\subseteq\epsilon_{l}^+, N\subseteq\epsilon_{l}^-}(l(\overline{\cnomn}_{P}, \overline{\top}_{\epsilon_{l}^+\setminus P}, \overline{\nomi}_{N}, \overline{\bot}_{\epsilon_{l}^-\setminus N})\leq \cnomm\;\;\&\;\;\bigamp\limits_{e\in P}(\phi_e\leq\cnomn_e)\;\;\&\;\;\bigamp\limits_{e\in N}(\nomi_e\leq\psi_e))$}
\DisplayProof\\\\

\end{tabular}
\end{center}

The leftmost inequalities in each rule above will be referred to as the \emph{side condition}.
In the rules above, we adopt the following notation: by $k(\overline{\phi}_{\epsilon_{k}^+}, \overline{\psi}_{\epsilon_{k}^-})$, we have grouped together the positive and the negative coordinates of the connective $k$.

Each approximation rule transforms a given system $S\cup\{s\leq t\}$ into systems $S\cup\{s_1\leq t_1\}$ and $S\cup\{s_2\leq t_2, s_3\leq t_3\}$, the first of which containing only the side condition (in which no propositional variable occurs), and the second one containing the instances of the two remaining lower inequalities.

The nominals and conominals introduced by the approximation rules must be {\em fresh}, i.e.\ must not already occur in the system before applying the rule.

\paragraph{Additional approximation rules for the HAR setting.}
\begin{center}
\begin{tabular}{cc}
\AxiomC{$\phi\rightarrow \chi\leq \cnomm $}
\RightLabel{($\rightarrow$Appr$_1$)}
\UnaryInfC{ $\nomj\leq \phi\ \&\ \chi\leq \cnomn\ \&\ \nomj\rightarrow \cnomn\leq \cnomm$}
\DisplayProof
\end{tabular}
\end{center}

\paragraph{Ackermann rules.} These rules are the core of \textsf{ALBA}$^r$, since their application eliminates proposition variables. As mentioned earlier, all the preceding steps are aimed at equivalently rewriting the input system into one or more systems, each of which of a shape in which the Ackermann rules can be applied. An important feature of Ackermann rules is that they are executed on the whole set of inequalities in which a given variable occurs, and not on a single inequality.\\

$$\frac{ \exists p \; \Big [
\mbox{\Large{\&}}_{i=1}^{n} \{ \alpha_i \leq p \} \;\; \& \;\;
 \mbox{\Large{\&}}_{j=1}^m \{ \beta_j(p)\leq \gamma_j(p)\} \Big ]
}{
\mbox{\Large{\&}}_{j=1}^m
\{\beta_j(\bigvee_{i=1}^n \alpha_i)\leq \gamma_j(\bigvee_{i=1}^n \alpha_i) \} } (RAR)$$
where
$p$ does not occur in $\alpha_1, \ldots, \alpha_n$, the formulas
$\beta_{1}(p), \ldots, \beta_{m}(p)$ are positive in $p$, and
$\gamma_{1}(p), \ldots, \gamma_{m}(p)$ are negative in $p$.
Here below is the left-Ackermann rule:

$$\frac{\exists p \; \Big [
\mbox{\Large{\&}}_{i=1}^{n} \{ p \leq \alpha_i \} \;\; \& \;\;
 \mbox{\Large{\&}}_{j=1}^m \{ \beta_j(p)\leq \gamma_j(p)\} \Big ]
 }{
 \mbox{\Large{\&}}_{j=1}^m \{ \beta_j(\bigwedge_{i=1}^n \alpha_i)\leq \gamma_j(\bigwedge_{i=1}^n \alpha_i) \}}
(LAR)$$
where
$p$ does not occur in $\alpha_1, \ldots, \alpha_n$, the formulas
$\beta_{1}(p), \ldots, \beta_{m}(p)$ are negative in $p$, and
$\gamma_{1}(p), \ldots, \gamma_{m}(p)$ are positive in $p$.

\paragraph{Third stage: output.}

If there was some system in the second stage from which not all occurring propositional variables could be eliminated through the application of the reduction rules, then \textsf{ALBA}$^r$ reports failure and terminates. Else, each system $\{\nomi_0\leq\phi_i, \psi_i\leq \cnomm_0\}$ obtained from $\mathsf{Preprocess}(\varphi\leq \psi)$ has been reduced to a system, denoted $\mathsf{Reduce}(\varphi_i\leq \psi_i)$, containing no propositional variables. Let \textsf{ALBA}$^r$$(\varphi\leq \psi)$ be the set of quasi-inequalities \begin{center}{\Large{\&}}$[\mathsf{Reduce}(\varphi_i\leq \psi_i) ]\Rightarrow \nomi_0 \leq \cnomm_0$\end{center} for each $\varphi_i \leq \psi_i \in \mathsf{Preprocess}(\varphi\leq \psi)$.

Notice that all members of \textsf{ALBA}$^r(\varphi\leq \psi)$ are free of propositional variables. Hence, translating them as discussed in Section \ref{sec:example:lemmon:corre:canoni} produces sentences in the language of the Kripke frames with impossible worlds dual to the perfect r-BAEs which hold simultaneously on those Kripke frames with impossible worlds iff the input inequality is valid on them.
\textsf{ALBA}$^r$ returns \textsf{ALBA}$^r(\varphi\leq \psi)$ and terminates. An inequality $\varphi\leq \psi$ on which \textsf{ALBA}$^r$ succeeds will be called an \textsf{ALBA}$^r$-{\em inequality}.
\subsection{Soundness and canonicity of \textsf{ALBA}$^r$}\label{subsec:correctness}
As we mentioned at the beginning of the present section, \textsf{ALBA}$^r$ coincides with the restriction of the metacalculus defined in \cite[Section 5.1]{CGPSZ14} to the special case in which each term function $\pi, \sigma, \lambda, \rho$ reduces to one (fresh) connective. Hence, the proof of canonicity is an immediate consequence of the present treatment relies on \cite[Section 5.1]{CGPSZ14}, and only expands on the points in which \textsf{ALBA}$^r$ and \textsf{ALBA}$^e$ behave differently.

\begin{theorem}[Soundness]\label{Crctns:Theorem}

For any $\mathsf{ALBA}^r$-inequality $\varphi\leq \psi$, for any perfect $\mathcal{L}$-algebra $\A$, \[\A\models\varphi\leq\psi \mbox{ iff } \A\models\mathsf{ALBA}^r(\varphi\leq \psi). \]

\end{theorem}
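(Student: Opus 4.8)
The plan is to fix an arbitrary perfect $\mathcal{L}$-algebra $\A$ and prove the biconditional by showing that $\mathsf{ALBA}^r$, read as a sequence of transformations of (systems of) (quasi-)inequalities, preserves validity on $\A$ at every step, in both directions. Since $\mathsf{ALBA}^r(\varphi\leq\psi)$ is by construction the meta-conjunction of the pure quasi-inequalities obtained by running the reduction-elimination cycle on each member of $\mathsf{Preprocess}(\varphi\leq\psi)$, it is enough to verify, rule by rule: (i) that preprocessing together with the first-approximation step equivalently rewrites $\A\models\varphi\leq\psi$; and (ii) that each reduction rule --- residuation, adjunction, approximation, splitting, Ackermann, and the additional HAR rules --- is sound and invertible on $\A$.

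For (i), the distribution rules of the first preprocessing step are sound because in a perfect DLR the connectives $f, g, k, l$ are interpreted as completely additive or completely multiplicative maps (coordinatewise and modulo order-type), and hence distribute over the relevant non-empty joins and meets; the splitting and the monotone/antitone variable-elimination rules are sound by elementary lattice reasoning, using that the extremal assignments $p\mapsto\bot$ and $p\mapsto\top$ are available in $\A$. The first-approximation rule is where perfectness is essential: since $\A$ is completely join-generated by $J^{\infty}(\A)$ and completely meet-generated by $M^{\infty}(\A)$, an assignment validates $\varphi\leq\psi$ iff, for all interpretations of the fresh nominal $\nomi_0$ and conominal $\cnomm_0$, the inequalities $\nomi_0\leq\varphi$ and $\psi\leq\cnomm_0$ jointly force $\nomi_0\leq\cnomm_0$.

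For (ii), the soundness and invertibility of the general BDL- and HA-rules (splitting, residuation, the $\rightarrow$-approximation rule, and the Ackermann rules) is exactly as in \cite{ConPal12, CoGhPa14}, and the soundness and invertibility of the rules specific to the regular connectives $f, g, k, l$ transfer from the corresponding statements for the term functions $\pi,\sigma,\lambda,\rho$ proved in \cite[Section 5.1]{CGPSZ14}. The one point that genuinely requires attention --- and the place where the present argument must depart from a verbatim transcription of \cite{CGPSZ14} --- is the observation that $f, g, k, l$ are \emph{primitive} connectives, so that in \emph{every} perfect DLR their interpretations are completely additive/multiplicative by the very definition of perfect DLR (cf.\ Definition \ref{def:dlr} and the remark following Definition \ref{def:canextdlr}). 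Consequently the normalizations $\Diamond_f$ and $\Box_g$ are completely join-, resp.\ meet-preserving, the adjoints $\blacksquare_f, \Diamondblack_g$ (and their order-type-$\partial$ counterparts ${\blhd}, {\brhd}$) exist on every perfect DLR, and the adjunction and approximation rules are therefore sound on the \emph{whole} class of perfect DLRs --- not only on those perfect DLEs that arise as canonical extensions, as was the case for the compound-connective rules of $\mathsf{ALBA}^e$ (cf.\ \cite[Proposition 29]{CGPSZ14} and \cite[Section 9]{CGPSZ14}). This is precisely the strengthening announced before the statement, and I expect checking it to be the main obstacle: one has to confirm that every side condition and every freshness requirement in the regular approximation and adjunction rules still goes through when these rules concern only primitive connectives interpreted on an arbitrary perfect DLR. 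The HAR setting is handled in exactly the same way, using in addition that a perfect Heyting algebra is a bi-Heyting algebra, so that the operations $-$ and $\rightarrow$ have the adjoints needed for the residuation and $\rightarrow$-approximation rules.
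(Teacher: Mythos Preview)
Your proposal is correct and follows essentially the same approach as the paper: both refer the general rules to \cite{ConPal12, CoGhPa14} and the regular-specific rules to \cite[Proposition 29]{CGPSZ14}, and both identify as the only genuinely new point the fact that, because $f, g, k, l$ are \emph{primitive} connectives interpreted as completely additive/multiplicative maps in every perfect DLR, the normalization identities $f(p)=f(\bot)\vee\Diamond_f(p)$, $g(p)=g(\top)\wedge\Box_g(p)$ (and their order-type-$\partial$ variants) hold on arbitrary perfect DLRs, whence the adjunction and approximation rules are sound and invertible there and not merely on canonical extensions. The paper makes this last step slightly more explicit by writing out the formula-rewriting rules and deriving the adjunction/approximation rules from them, but the argument is the same.
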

\begin{proof}
This proof is very similar to the proof of \cite[Proposition 29]{CGPSZ14}. The only rules that deserve additional discussion are the approximation and the adjunction rules, the soundness of which can be proved, as in \cite[Proposition 29]{CGPSZ14}, by deriving them from a set of rules which includes the following ones, which, for the sake of conciseness, are given as formula-rewriting rules:

\begin{center}
\begin{tabular}{cc}

\AxiomC{$f(p)$}
\LeftLabel{($\eta_f = 1$)}
\UnaryInfC{$f(\bot)\vee \diam_f(p)$}
\DisplayProof

&
\AxiomC{$ g(p)$}
\RightLabel{($\eta_g = 1$)}
\UnaryInfC{$g(\top)\wedge \Box_g(p)$}
\DisplayProof\\

& \\
\AxiomC{$f(p)$}
\LeftLabel{($\eta_f = \partial$)}
\UnaryInfC{$f(\top)\vee {\lhd}_f(p)$}
\DisplayProof
& \AxiomC{$g(p)$}
\RightLabel{($\eta_g = \partial$)}
\UnaryInfC{$g(\bot)\wedge {\rhd}_g (p)$}
\DisplayProof \\
\end{tabular}\end{center}
Notice that in any perfect DLR $\A$, the interpretations of the connectives $f$ and $g$ are completely additive and multiplicative, respectively. Hence, it is easy to see that

\[\A \models f(p) = f(\bot) \lor \Diamond_f(p) \mbox{ if } \eta_f = 1 \quad\quad \A \models g(p) = g(\top) \land \Box_g(p) \mbox{ if } \eta_g = 1 \]
\[\A \models f(p) = f(\top) \lor {\lhd}_f(p) \mbox{ if } \eta_f = \partial \quad\quad \A \models g(p) = g(\bot) \land {\rhd}_g(p) \mbox{ if } \eta_g = \partial \]
which proves the soundness and invertibility of the rules above on any perfect DLR $\A$.
Let us give two derivations as examples: $\eta_f = \partial$ in the left-hand one and $\eta_f = 1$ in the right-hand one.

\begin{center}
\begin{tabular}{cc}
\AxiomC{$f(\phi)\leq \psi$}
\UnaryInfC{$f(\top)\vee{\lhd}_f(\phi)\leq \psi$}
\UnaryInfC{$f(\top)\leq \psi\;\;\; {\lhd}_f(\phi)\leq \psi$}
\UnaryInfC{$f(\top)\leq\psi\;\;\; {\blhd} \psi\leq\phi$}
\DisplayProof
&
\AxiomC{$\nomi\leq f(\psi)$}
\UnaryInfC{$\nomi\leq f(\bot)\vee \diam_f(\psi)$}
\UnaryInfC{$[\nomi\leq f(\bot)]\;\;\parr\;\; [\nomi\leq\diam_f(\psi)]$}
\UnaryInfC{$[\nomi\leq f(\bot)]\;\;\parr\;\; [\nomj\leq\psi\;\;\;\;\nomi\leq\Diamond_f(\nomj)]$}
\UnaryInfC{$[\nomi\leq f(\bot)]\;\;\parr\;\; [\nomj\leq\psi\;\;\;\;\nomi\leq f(\nomj)]$}

\DisplayProof
\\
\end{tabular}
\end{center}
\end{proof}

\begin{dfn}
\label{def:safe:execution}
An execution of $\mathsf{ALBA}^r$ is \emph{safe} if no side conditions (cf.\ Page \pageref{def:sidecondition}) introduced by applications of adjunction rules for the new connectives are further modified, except for receiving Ackermann substitutions.
\end{dfn}

\begin{theorem}\label{thm:canonical}
All inequalities on which $\mathsf{ALBA}^r$ safely succeeds are canonical.
\end{theorem}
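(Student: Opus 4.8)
The plan is to run the standard ``U-shaped'' algebraic canonicity argument (as for \textsf{ALBA} in \cite{ConPal12} and for \textsf{ALBA}$^e$ in \cite{CGPSZ14}), the only novelty being the bookkeeping forced by the non-normal primitive connectives. Fix a DLR $\A$. By the remark following Definition \ref{def:canextdlr}, $\A^\delta$ is a perfect DLR, and $\A$ embeds into $\A^\delta$ as a sublattice on which $f^\sigma, g^\sigma, k^\sigma, l^\pi$ restrict to $f, g, k, l$; hence for every $\mathcal{L}$-term $\varphi$ and every \emph{admissible} assignment (one sending each propositional variable into $\A$) the term functions $\varphi^{\A^\delta}$ and $\varphi^{\A}$ agree. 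Writing $\A^\delta\models_{\A}(-)$ for validity under admissible assignments, this gives $\A\models\varphi\leq\psi$ iff $\A^\delta\models_{\A}\varphi\leq\psi$. I would then establish the chain
\[\A^\delta\models_{\A}\varphi\leq\psi\iff\A^\delta\models_{\A}\mathsf{ALBA}^r(\varphi\leq\psi)\iff\A^\delta\models\mathsf{ALBA}^r(\varphi\leq\psi)\iff\A^\delta\models\varphi\leq\psi,\]
where the last equivalence is Theorem \ref{Crctns:Theorem} applied to the perfect DLR $\A^\delta$, and the middle one is free because the members of $\mathsf{ALBA}^r(\varphi\leq\psi)$ are pure, so admissible and arbitrary assignments coincide on them. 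Concatenating yields $\A\models\varphi\leq\psi\Rightarrow\A^\delta\models\varphi\leq\psi$, i.e.\ canonicity.

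The substance is the first equivalence, which I would prove by checking that every rule used in a \emph{safe} execution of \textsf{ALBA}$^r$ is sound and invertible under admissible assignments on $\A^\delta$. For the preprocessing rules, the first-approximation rule, the residuation rules, the adjunction rules for $f$ and $g$, and the approximation rules for $f, g, k, l$, I would transcribe the corresponding arguments of \cite[Section 5.1]{CGPSZ14}: all the complete distributivity and adjointness facts they use hold because $\A^\delta$ is a perfect DLR and because, by Lemmas \ref{lem:smooth} and \ref{lem:complete:additive}, $f^\sigma$ is completely additive, $g^\sigma$ is completely multiplicative, $k^\sigma$ is completely additive and $l^\pi$ is completely multiplicative, so that the normalizations $\Diamond_f,\Box_g$ and their adjoints $\blacksquare_f,\Diamondblack_g,\blacktriangleleft_f,\blacktriangleright_g$ exist on $\A^\delta$. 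The crucial point---and the reason this goes through more smoothly than for \textsf{ALBA}$^e$---is that here the connectives handled by these rules are \emph{primitive}, so their interpretations in $\A^\delta$ are genuinely (completely) additive or multiplicative, rather than $\sigma/\pi$-extensions of compound term functions.

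The hard part will be the Ackermann rules, which need a \emph{topological} Ackermann lemma: the easy inclusion holds under arbitrary assignments, but for the converse one must verify that the minimal valuation $\bigvee_i\alpha_i$ (dually $\bigwedge_i\alpha_i$) substituted for the eliminated variable has the right topological character and interacts correctly with the term functions $\beta_j,\gamma_j$ into which it is plugged. As in \cite[Section 5.1]{CGPSZ14}, this is secured by the syntactic discipline of the SAC/SMP classification of Table \ref{Join:and:Meet:Friendly:Table} together with the order-theoretic facts of Part I: in the perfect DLR $\A^\delta$ the interpretation of every SAC node is (completely) additive---hence, by Lemmas \ref{lem:complete:additive} and \ref{thm:operator:Scott}, weakly $m$-Scott continuous---and that of every SMP node is multiplicative---hence, by Lemma \ref{thm:mulsuc}, SUC---and these are exactly the continuity properties that make the substitution admissibility-preserving. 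The \emph{safety} hypothesis of Definition \ref{def:safe:execution} then enters to keep the side conditions produced by the adjunction rules for the fresh connectives (e.g.\ $f(\bot)\leq\psi$), which encode the non-normality of $f$ and $g$, in the shape presupposed by the topological Ackermann lemma; were these side conditions further transformed, that lemma could fail to apply. I expect this topological step---and the routine but delicate verification that a safe execution never destroys the shape it relies on---to be the only real obstacle; the rest is a faithful adaptation of \cite[Section 5.1]{CGPSZ14} to the primitive-connective setting.
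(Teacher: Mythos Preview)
Your overall architecture is exactly that of the paper: the paper's proof is a one-line deferral to \cite[Theorem 38]{CGPSZ14}, and you have correctly unpacked that into the U-shaped argument, with the right-hand leg given by Theorem \ref{Crctns:Theorem}, the bottom rung by purity of $\mathsf{ALBA}^r(\varphi\leq\psi)$, and the left-hand leg by a topological Ackermann lemma whose applicability is guaranteed by safety.

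One point of confusion worth flagging: in your account of the topological Ackermann step you invoke the SAC/SMP classification of Table \ref{Join:and:Meet:Friendly:Table} and the Part I lemmas on weak $m$-Scott continuity and SUC. These belong to the J\'onsson-style argument and play no role here. The relevant syntactic classification is the \emph{syntactically closed / syntactically open} one (cf.\ footnote \ref{footnote top adeq} and \cite[Definition 46]{CGPSZ14}), and the relevant invariants are \emph{compact-appropriateness} (left-hand sides syntactically closed, right-hand sides syntactically open) and \emph{topological adequacy} (the side conditions $f(\bot)\leq\psi$, $\phi\leq g(\top)$, etc.\ accompanying each occurrence of $\blacksquare_f,\Diamondblack_g,\blacktriangleleft_f,\blacktriangleright_g$). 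Safety is precisely what keeps topological adequacy invariant; compact-appropriateness is invariant under all \textsf{ALBA}$^r$ rules. The continuity properties actually used in the proof of the conditional topological Ackermann lemmas are the closed/open Esakia properties of the primitive connectives on $\A^\delta$, not the UC/SUC facts from Part I. With this correction your sketch is a faithful expansion of the paper's proof.
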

\begin{proof}
Straightforward adaptation of the proof of \cite[Theorem 38]{CGPSZ14}.
\end{proof}
Before moving on, let us briefly discuss the specific HAR-setting with respect to canonicity. The only difference between HAR and DLR is that the logical connective $\rightarrow$ is part of the original signature. Hence, in the HAR setting, the connective $\rightarrow$ has better topological/order-theoretic properties than it has in the DLR setting, the difference being comparable to that between the `white' and the `black' connectives in \cite{ConPal12}.

In particular, one needs to show that the additional rules for $\rightarrow$ are sound and invertible under both arbitrary and admissible valuations, and that they preserve and reflect compact appropriateness (cf.\ \cite[Definition 41]{CGPSZ14}), and when applied safely, they preserve topological adequacy (cf.\ \cite[Definition 40]{CGPSZ14}).\footnote{\label{footnote top adeq}
A system $S$ of $\mathcal{L}_{\eta}^{+}$ inequalities is {\em topologically adequate} if whenever $\blacksquare_f\psi$ (resp.\ $\Diamondblack_g\phi$, $\blacktriangleright_f\psi$, $\blacktriangleleft_g\phi$) occurs in $S$, then $f(\bot)\leq \psi$ (resp.\ $g(\top)\geq \phi$, $f(\bot)\geq \psi$, $g(\top)\leq \phi$) is in $S$.
A system $S$ of $\mathcal{L}_{\eta}^{+}$ inequalities is {\em compact-appropriate} if the left-hand side of each inequality in $S$ is syntactically closed and the right-hand side of each inequality in $S$ is syntactically open (cf.\ \cite[Definition 46]{CGPSZ14}).} The first and second requirements are of straightforward verification. The third requirement immediately follows from the definitions of safe execution and topological adequacy (cf.\ footnote \ref{footnote top adeq}).
\commment{
In the present section we give an outline of the proof that \textsf{ALBA}$^r$ is sound, i.e.\ for any \textsf{ALBA}$^r$-inequality $\varphi\leq \psi$, for any DLR $\A$, $\ca\models\varphi\leq\psi$ iff $\ca\models$\textsf{ALBA}$^r(\varphi\leq \psi)$. Essentially, the proof goes in parallel with the correctness proof of ALBA in \cite{ConPal12}, and we only focus on the parts different from that one.

Fix a DLR $\A$ and an assignment $h$ which sends proposition letters, nominals, conominals to elements in $\ca$, $J^{\infty}(\ca)$, $M^{\infty}(\ca)$, respectively. We use $\ca, h\models\Gamma$ to denote that the set of quasi-inequalities holds in $\ca$ under the assignment $h$. For any assignment $h$, we use $h^a_p$ to denote that $h^a_p$ agrees with $h$ on all variables, nominals and conominals except that $h^a_p(p)=a$.

We state the soundness theorem, give an outline of the proof, and give the detailed proof only for the parts different from the correctness proof of ALBA.

\begin{theorem}[Soundness]\label{Crctns:Theorem}

For any $\mathsf{ALBA}^r$-inequality $\varphi\leq \psi$, for any DLR $\A$, $\ca\models\varphi\leq\psi$ iff $\ca\models\mathsf{ALBA}^r(\varphi\leq \psi)$.

\end{theorem}
\begin{proof}
Consider the chain of statements (\ref{Crct:Eqn1}) to (\ref{Crct:Eqn5}) below. Note that (\ref{Crct:Eqn4}) is per definition equivalent to $\mathcal{F}, w \models \mathsf{ALBA}^r (\phi \leq \psi)$. The proof will be complete if we can show that (\ref{Crct:Eqn1}) to (\ref{Crct:Eqn5}) are equivalent.
\begin{eqnarray}
&&\ca\models\phi \leq \psi\label{Crct:Eqn1}\\
&&\ca\models\mathsf{Preprocess}(\phi \leq \psi)\label{Crct:Eqn2}\\
&&\ca, h\models(\nomj_0 \leq \phi' \: \& \: \psi' \leq \cnomm_0) \Rightarrow \nomj_0 \leq \cnomm_0\nonumber\\
&&\textrm{ for all } \phi' \leq \psi' \in \mathsf{Preprocess}(\phi \leq \psi) \textrm{ and all assignments } h\label{Crct:Eqn3}\\
&&\ca, h\models\mathsf{Reduce}(\phi' \leq \psi') \Rightarrow \nomj_0 \leq \cnomm_0\nonumber\\
&&\textrm{ for all } \phi' \leq \psi' \in \mathsf{Preprocess}(\phi \leq \psi) \textrm{ and all assignments} h\label{Crct:Eqn4}\\
&&\ca\models\mathsf{ALBA}^r(\varphi\leq \psi)\label{Crct:Eqn5}
\end{eqnarray}
The equivalence between (\ref{Crct:Eqn1}) and (\ref{Crct:Eqn2}) is given by Lemma \ref{Pre:Process:Crct:Lemma}. The equivalence between (\ref{Crct:Eqn2}) and (\ref{Crct:Eqn3}) can be proved similarly to the equivalence between (20) and (21) in Theorem 8.1 in \cite{ConPal12}.

For the equivalence between (\ref{Crct:Eqn3}) and (\ref{Crct:Eqn4}), the proof is similar to the equivalence between (21) and (22) in Theorem 8.1 in \cite{ConPal12}, the counterpart of Lemma 8.4.1 and Lemma 8.4.2 in \cite{ConPal12} are Lemma \ref{Reduction:Crtct:Lemma}.1 and Lemma \ref{Reduction:Crtct:Lemma}.2.

Lastly, the equivalence between (\ref{Crct:Eqn4}) and (\ref{Crct:Eqn5}) follows by definition.
\end{proof}

\begin{remark}\label{Adapt:To:Dscrpt:Frms:Remark}

Similar to the normal distributive modal logic case as in \cite{ConPal12}, the equivalence of (\ref{Crct:Eqn1}) to (\ref{Crct:Eqn5}) given in the proof of Theorem \ref{Crctns:Theorem} goes through almost the same for \emph{admissible} valuations instead of arbitrary valuations, e.g.\ Lemma \ref{Pre:Process:Crct:Lemma} and the residuation and approximation rules in Lemma \ref{Reduction:Crtct:Lemma} go through unchanged. The only problem is the case for the Ackermann-rule. We will need special restricted versions of the Ackermann lemmas for this rule, which is even more restricted than the normal case in \cite{ConPal12} (see Lemma \ref{Right:Ack} and \ref{Left:Ack}, below).

\end{remark}

\begin{lemma}\label{Pre:Process:Crct:Lemma}
Assume that a set of $\mathcal{L}$-inequalities $S'$ is obtained from a set $S$ by the application of preprocessing rules. Then $\ca\models S$ iff $\ca\models S'$.
\end{lemma}
\begin{proof}
Similar to the proof of Lemma 8.3 in \cite{ConPal12}.
\end{proof}

\begin{lemma}\label{equi:23}
$\bbas\vDash\phi'\leq\psi'$ iff $\bbas\vDash j_0\leq\phi'\ \&\ \psi'\leq m_0\Rightarrow j_0\leq m_0$.
\end{lemma}

\begin{lemma}\label{Reduction:Crtct:Lemma}
Let $S$ be a finite system of inequalities and $S'$ a system of inequalities obtained from $S$ through the application of reduction rules, and let $V$ be any valuation. Then
\begin{enumerate}
\item if $\ca, h \models S$ then $\ca, h' \models S'$ for some valuation $h' =_{\{\nomj_0, \cnomm_0\}} h$, and
\item if $\ca, h \models S'$ then $\ca, h' \models S$ for some valuation $h' =_{\{\nomj_0, \cnomm_0\}} h$.
\end{enumerate}
\end{lemma}
\begin{proof}
By the transitivity of the $=_{\{\nomj_0, \cnomm_0 \}}$-relation, it suffices to check that the claim holds in the special case in which $S'$ is obtained from $S$ by a single rule-application. Here we treat the approximation rules and adjunction rules, the residuation rules and Ackermann's rules are the same as in \cite{ConPal12}.

For approximation rules, the preservation follows by Proposition \ref{prop:approxi}.

For adjunction rules, the preservation follows by Proposition \ref{prop:main} and the properties of adjoints.

\end{proof}

For the propositions mentioned in the proof above, we need to introduce the following definition, which can be found from \cite{Zhao13}.

\begin{definition}
For any $f, g:\ca\rightarrow B^\delta$,
\begin{enumerate}
\item
$f$ is {\em closed Esakia} if it preserves down-directed meets of closed elements of $\ca$, that is: $$f(\bigwedge\{c_i : i\in I\})=\bigwedge\{f(c_i): i\in I\}$$ for any downward-directed collection $\{c_i : i\in I\}\subseteq \kbbas$;
\item
$g$ is {\em open Esakia} if it preserves upward-directed joins of open elements of $\ca$, that is: $$g(\bigvee\{o_i : i\in I\})=\bigvee\{g(o_i): i\in I\}$$ for any upward-directed collection $\{o_i : i\in I\}\subseteq O(\ca)$.
\end{enumerate}

\end{definition}

\begin{prop}\label{prop:main}
For any $f,g$ of order-type 1, $f', g'$ of order-type $\partial$, for any DLR $\A$ and any $u \in A^\delta$,

\begin{equation}\label{eq:aimmap1}
f(u) = f(\bot) \lor \diam_f(u);
\end{equation}
\begin{equation}\label{eq:aimmap2}
g(u) = g(\top)\land\Box_g(u).
\end{equation}
\begin{equation}\label{eq:aimmap3}
f'(u) = f'(\top) \lor \lhd_{f'}(u);
\end{equation}
\begin{equation}\label{eq:aimmap4}
g'(u) = g'(\bot)\land\rhd_{g'}(u).
\end{equation}

\end{prop}

\begin{proof}

By the definition of the canonical extension of a DLR, $f^{\ca}=(f^{\ca})^{\sigma}$, $g^{\ca}=(g^{\ca})^{\pi}$, $f'^{\ca}=(f'^{\ca})^{\sigma}$, $g'^{\ca}=(g'^{\ca})^{\pi}$, so by Theorem 6.7 in \cite{Zhao13}, they are both closed Esakia and open Esakia. Then the proposition follows from in \cite[Proposition 10]{CGPSZ14}.

\end{proof}

The following proposition can be easily checked for additive (resp.\ multiplicative) maps:

\begin{prop}\label{prop:approxi}

For any $i\in\jir$, $m\in\mir$, $u, \overline{u}\in\ca$, $f$, $g$, $f'$, $g'$, $k$, $l$, $k'$, $l'$ such that $\eta_f=\eta_g=\eta_{k_i}=\eta_{l_i}=1$, $\eta_{f'}=\eta_{g'}=\eta_{k'_i}=\eta_{l'_i}=\partial$,

\begin{enumerate}
\item if $i\leq f^{\ca}(u)$, then $i\leq f^{\ca}(\bot)$ or there exists a $j\in\jir$ such that $j\leq u$ and $i\leq f^{\ca}(j)$;
\item if $g^{\ca}(u)\leq m$, then $g^{\ca}(\top)\leq m$ or there exists an $n\in\mir$ such that
$u\leq n$ and $g^{\ca}(n)\leq m$;
\item if $i\leq f'^{\ca}(u)$, then $i\leq f'^{\ca}(\top)$ or there exists an $m\in\mir$ such that $u\leq m$ and $i\leq f'^{\ca}(m)$;
\item if $g'^{\ca}(u)\leq m$, then $g'^{\ca}(\bot)\leq m$ or there exists an $i\in\jir$ such that $i\leq u$ and $g'^{\ca}(i)\leq m$;
\item if $i\leq k^{\ca}(\overline{u})$, then $i\leq k^{\ca}(\,\overline{u}[\bot/u_i])$ or there exists a $j\in\jir$ such that $j\leq u_i$ and $i\leq k^{\ca}(\overline{u}[j/u_i])]$;
\item if $l^{\ca}(\overline{u})\leq m$, then $l^{\ca}(\,\overline{u}[\top/u_i])\leq m$ or there exists an $n\in\mir$ such that $u_i\leq n$ and $l^{\ca}(\,\overline{u}[n/u_i])\leq m$;
\item if $i\leq k^{\ca}(\overline{u})$, then $i\leq k^{\ca}(\,\overline{u}[\top/u_i])$ or there exists an $m\in\mir$ such that $u_i\leq m$ and $i\leq k^{\ca}(\overline{u}[m/u_i])$;
\item if $l^{\ca}(\overline{u})\leq m$, then $l^{\ca}(\,\overline{u}[\bot/u_i])\leq m$ or there exists an $i\in\jir$ such that $i\leq u_i$ and $l^{\ca}(\,\overline{u}[i/u_i])\leq m$.
\end{enumerate}
\end{prop}

\subsection{Canonicity of \textsf{ALBA}$^r$-inequalities}

In this subsection, we show the canonicity of safe \textsf{ALBA}$^r$-inequalities via the U-shaped argument. The following theorem is the main result of this subsection:

\begin{theorem}\label{thm:canonical}
All inequalities on which \textsf{ALBA}$^r$ safely succeeds are canonical.
\end{theorem}

\begin{proof}
For the proof, consider the inequality $\phi\leq\psi$ and the resulting quasi-inequalities \textsf{ALBA}$^r(\phi\leq\psi)$ after executing \textsf{ALBA}$^r$. We can see that the quasi-inequalities in \textsf{ALBA}$^r(\phi\leq\psi)$ are pure, therefore $\bbas\vDash$\textsf{ALBA}$^r(\phi\leq\psi)$ holds iff $\bbas\vDash_{\bba}$\textsf{ALBA}$^r(\phi\leq\psi)$ holds, hence we can use the U-shaped argument represented below to show that from $\bbas\vDash_{\bba}\phi\leq\psi$ (i.e. $\bba\vDash\phi\leq\psi$) we can get $\bbas\vDash\phi\leq\psi$:

\begin{center}

\begin{tabular}{l c l}\label{table:U:shape}
$\bbas\vDash_{\bba}\phi\leq\psi$ & &$\bbas\vDash\phi\leq\psi$\\

$\ \ \ \ \ \ \ \ \ \ \ \Updownarrow$ & &$\ \ \ \ \ \ \ \ \ \ \Updownarrow $\\

$\bbas\vDash_{\bba}$\textsf{ALBA}$^r(\phi\leq\psi)$

&\ \ \ $\Leftrightarrow$ \ \ \ &$\bbas\vDash$\textsf{ALBA}$^r(\phi\leq\psi)$.\\

\end{tabular}
\end{center}

The right-hand equivalence is proved in Theorem \ref{Crctns:Theorem} above, the bottom-line equivalence holds by the fact that the value of \textsf{ALBA}$^r(\phi\leq\psi)$ depends only on the assignment for nominals and conominals, so there is no difference between admissible assignment and arbitrary assignment. The proof strategy for the left-hand equivalence is similar to that of the right-hand equivalence. In fact, almost for all steps, the proofs are similar (by the admissible assignment versions of Lemma \ref{Pre:Process:Crct:Lemma}, \ref{equi:23}, \ref{Reduction:Crtct:Lemma}), except for the Ackermann rule, which is sound on the left-hand side due to the conditional topological Ackermann lemmas only when additional syntactic constraints are satisfied, which is even more strict than in the normal case as in \cite{ConPal12}. By Lemma \ref{Lem:openclose}, these syntactic constraints are satisfied, therefore the Ackermann rule preserves the equivalence for admissible assignments. Hence, we get the canonicity of all \textsf{ALBA}$^r$ inequalities.

\end{proof}

In order to show the canonicity of \textsf{ALBA}$^r$-inequalities, however, just success is not enough: we still need to guarantee the conditional Ackermann lemma. In order to guarantee this, we need the following definition of safe execution:

\begin{dfn}

An execution of \textsf{ALBA}$^r$ is \emph{safe} if no side conditions (cf.\ Page \pageref{def:sidecondition}) introduced by applications of adjunction rules for the new connectives are further modified, except for receiving Ackermann substitutions.

\end{dfn}

\begin{dfn}
A system $S$ of $\mathcal{L}_{\eta}^{+}$ inequalities is {\em topologically adequate} if whenever $\blacksquare_f\psi$ (resp.\ $\Diamondblack_g\phi$, $\blacktriangleright_f\psi$, $\blacktriangleleft_g\phi$) occurs in $S$, then $f(\bot)\leq \psi$ (resp.\ $g(\top)\geq \phi$, $f(\bot)\geq \psi$, $g(\top)\leq \phi$) is in $S$.
\end{dfn}

\begin{dfn}
A system $S$ of $\mathcal{L}_{\eta}^{+}$ inequalities is {\em compact-appropriate} if the left-hand side of each inequality in $S$ is syntactically closed and the right-hand side of each inequality in $S$ is syntactically open (cf.\ definition \ref{def:syn:closed:and:open})
\end{dfn}

\begin{prop}
\label{prop:top:adequacy: invariant}
Topological adequacy is an invariant of safe executions of \textsf{ALBA}$^r$.
\end{prop}
\begin{proof}
The proof goes analogously to \cite[Proposition 27]{CGPSZ14}.
\end{proof}

\begin{lem}
\label{lemma:good:shape: invariant}
Compact-appropriateness is an invariant of \textsf{ALBA}$^r$ executions.
\end{lem}
\begin{proof}
Entirely analogous to the proof of \cite[Lemma 9.5]{ConPal12} and \cite[Lemma 28]{CGPSZ14}.
\end{proof}

For the conditional topological Ackermann's lemmas, the proof goes essentially the same as the proof in \cite{CGPSZ14}. Here we only give the necessary definitions and the statement of the lemmas outline, while for the details, we left it to the reader to check Section 7 in \cite{CGPSZ14}.

\begin{dfn}[Syntactically closed and open $\mathcal{L}_{\eta}^{+}$-formulas]\label{def:syn:closed:and:open}
\begin{enumerate}
\item A $\mathcal{L}_{\eta}^{+}$-formula is \emph{syntactically closed} if all occurrences of nominals, $\blacktriangleleft_g, \Diamondblack_g$ are positive, and all occurrences of conominals, $\blacktriangleright_f, \blacksquare_f$ are negative;
\item A $\mathcal{L}_{\eta}^{+}$-formula is \emph{syntactically open} if all occurrences of nominals, $\blacktriangleleft_g, \Diamondblack_g$ are negative,
and all occurrences of conominals, $\blacktriangleright_f, \blacksquare_f$ are positive.
\end{enumerate}
\end{dfn}

\noindent In the remainder of the section, we work under the assumption that the values of all parameters (propositional variables, nominals and conominals) occurring in the term functions mentioned in the statements of propositions and lemmas are given by admissible assignments.

\begin{prop}[Right-handed Topological Ackermann Lemma]\label{Right:Ack}
Let $S$ be a topologically adequate system of$\mathcal{L}_{\eta}^{+}$-inequalities which is the union of the following disjoint subsets:
\begin{itemize}
\renewcommand\labelitemi{--}
\item $S_1$ consists only of inequalities in which $p$ does not occur;
\item $S_2$ consists of inequalities of the type $\alpha\leq p$, where $\alpha$ is syntactically closed and $p$ does not occur in $\alpha$;
\item $S_3$ consists of inequalities of the type $\beta(p)\leq \gamma(p)$ where $\beta(p)$ is syntactically closed and positive in $p$, and $\gamma(p)$ be syntactically open and negative in $p$,
\end{itemize}

Then the following are equivalent:
\vspace{1mm}
\begin{enumerate}
\item
\vspace{1mm}
$\beta^{\bbas}(\bigvee\alpha^{\bbas})\leq\gamma^{\bbas}(\bigvee\alpha^{\bbas})$ for all inequalities in $S_3$, where $\bigvee\alpha$ abbreviates $\bigvee\{\alpha\mid \alpha\leq p\in S_2\}$;

\vspace{1mm}
\item There exists $a_0\in\bba$ such that $\bigvee\alpha^{\bbas}\leq a_0$ and $\beta^{\bbas}(a_0)\leq\gamma^{\bbas}(a_0)$ for all inequalities in $S_3$.
\end{enumerate}
\end{prop}

\begin{prop}[Left-handed Topological Ackermann Lemma]\label{Left:Ack}

Let $S$ be a topologically adequate system of $\mathcal{L}_{\eta}^{+}$-inequalities which is the union of the following disjoint subsets:
\begin{itemize}
\renewcommand\labelitemi{--}
\item $S_1$ consists only of inequalities in which $p$ does not occur;
\item $S_2$ consists of inequalities of the type $p\leq\alpha$, where $\alpha$ is syntactically open and $p$ does not occur in $\alpha$;
\item $S_3$ consists of inequalities of the type $\beta(p)\leq \gamma(p)$ where $\beta(p)$ is syntactically closed and negative in $p$, and $\gamma(p)$ be syntactically open and positive in $p$,
\end{itemize}
Then the following are equivalent:
\vspace{1mm}
\begin{enumerate}
\item
\vspace{1mm}
$\beta^{\bbas}(\bigwedge\alpha^{\bbas})\leq\gamma^{\bbas}(\bigwedge\alpha^{\bbas})$ for all inequalities in $S_3$, where $\bigwedge\alpha$ abbreviates $\bigwedge\{\alpha\mid p\leq\alpha\in S_2\}$;

\vspace{1mm}
\item There exists $a_0\in\bba$ such that $a_0\leq\bigwedge\alpha^{\bbas}$ and $\beta^{\bbas}(a_0)\leq\gamma^{\bbas}(a_0)$ for all inequalities in $S_3$.
\end{enumerate}

\end{prop}

\begin{lemma}[cf.\ Lemma 9.5 in \cite{ConPal12}]\label{Lem:openclose}

In every non-pure inequality $\phi'\leq\psi'$ obtained when \textsf{ALBA}$^r$ is run on an inequality
$\phi\leq\psi$, the left-hand side $\phi'$ is always syntactically closed while the right-hand side $\psi'$ is always syntactically open.

\end{lemma}
}

\section{Inductive inequalities, and success of \textsf{ALBA}$^r$ on them}
In the present section, we define the class of inductive DLR- and HAR-inequalities (from now on abbreviated as inductive inequalities, unless we need to distinguish the two languages), and prove that \textsf{ALBA}$^r$ succeeds on each of them with a safe run. In the light of Theorems \ref{Crctns:Theorem} and \ref{thm:canonical}, this will show that inductive inequalities are elementary and canonical, which in particular strengthens the J\'onsson-style canonicity of Part I, given that inductive inequalities have Sahlqvist inequalities as a proper subclass. Unlike the corresponding definitions in \cite[Section 3]{ConPal12}, the definitions below are given in terms of the positive classification (cf.\ \cite[Section 6.2]{CoGhPa14}).

\subsection{Inductive DLR- and HAR-inequalities}

\begin{table}[\here]
\begin{center}
\begin{tabular}{| c | c|}
\hline
Skeleton & PIA\\
\hline
SAC & SMP \\

\begin{tabular}{ c c c c c c}
$+$ &&$\vee$ &$\wedge$ &$f$ & $k$ \\
$-$ &$\rightarrow$ &$\wedge$ & $\vee$ & $g$ & $l$\\
\end{tabular}
&

\begin{tabular}{ c c c}
$+$ &$\wedge$&$g$\\
$-$ &$\vee$ & $f$\\

\end{tabular}\\
\hline
& SRR \\
&
\begin{tabular}{ c c c}
$+$ & $\vee$ &$\rightarrow$\\
$-$ &$\wedge$ & \\

\end{tabular}\\
\hline
\end{tabular}
\end{center}
\caption{Classification of nodes for $\mathrm{HAR}$ and $\mathrm{DLR}$. }
\label{Join:and:Meet:Friendly:Table:HAR:DLR}
\end{table}

The following definitions make use of the auxiliary Definition \ref{Excellent:Branch:Def}. This definition is to be relativized to the present DLR/HAR-settings, by making use of Table \ref{Join:and:Meet:Friendly:Table:HAR:DLR}.

\begin{definition}[Inductive DLR-inequalities]
\label{Inductive DLR:Ineq:Def}
For any order type $\epsilon$ and any irreflexive and transitive relation $\Omega$ on $p_1,\ldots p_n$, the (negative or positive) generation tree $*s$ $(* \in \{-, + \})$ of a term $s(p_1,\ldots p_n)$ is \emph{$(\Omega, \epsilon)$-inductive}
if every $\epsilon$-critical branch with leaf labelled $p_i$ is good (cf.\ Definition \ref{Excellent:Branch:Def}), and moreover, for every binary SRR node $\ast (\alpha \circ \beta)$,
\begin{enumerate}
\item $\epsilon^\partial(\star\alpha)$, and
\item $p_j <_{\Omega} p_i$ for every $p_j$ occurring in $\alpha$.
\end{enumerate}
Clearly, the conditions above imply that the $\epsilon$-critical branch runs through $\beta$.
We will refer to $\Omega$ as the \emph{dependency order} on the variables. An inequality $s \leq t$ is \emph{$(\Omega, \epsilon)$-inductive} if the trees $+s$ and $-t$ are both $(\Omega, \epsilon)$-inductive. An inequality $s \leq t$ is \emph{inductive} if it is $(\Omega, \epsilon)$-inductive for some $\Omega$ and $\epsilon$.
\end{definition}

\begin{definition}[Inductive HAR-inequalities]
\label{Inductive HAR:Ineq:Def}
Given an order type $\epsilon$, and an irreflexive and transitive relation $\Omega$ on the variables $p_1,\ldots p_n$, the (negative or positive) generation tree $*s$, $* \in \{-, + \}$, of a term $s(p_1,\ldots p_n)$ is \emph{$(\Omega, \epsilon)$-inductive}
if every $\epsilon$-critical branch with leaf labelled $p_i$ is good (cf.\ Definition \ref{Excellent:Branch:Def}), and moreover, for every binary SRR node $\ast (\alpha \circ \beta)$,

\begin{enumerate}
\item $\epsilon^\partial(\star\alpha)$ (resp.\ $\epsilon(\star\alpha)$) if $\circ$ is positive (resp.\ negative) in the first coordinate, and
\item $p_j <_{\Omega} p_i$ for every $p_j$ occurring in $\alpha$.
\end{enumerate}

Clearly, the conditions above imply that $\epsilon$-critical branches can pass only through the positive coordinate of $\rightarrow$.
The definition of $(\Omega, \epsilon)$-inductive and inductive inequalities is analogous to the previous definition.
\end{definition}

Notice that, in the DLR-signature, SRR nodes can be either $+\vee$ or $-\wedge$, whereas in the HAR-signature, SRR nodes can also be $+\rightarrow$, and hence the corresponding subtree of $*s$ is either $+ (\alpha \vee \beta)$ or $-(\alpha \wedge \beta)$ or $+(\alpha\rightarrow \beta)$. In each of the two settings, since excellent branches are in particular good, it is easy to see that Sahlqvist inequalities are special inductive inequalities.

\begin{example}
Regarded as HAR-formulas/inequalities, the following axioms from Section \ref{sec:prelimi},
\[(1)\ \Box(p\to q)\to\Box(\Box p\to\Box q)\ \ \ \mbox{ and }\ \ \ (1')\ \Box(p\to q)\to(\Box p\to\Box q)\]
 which are Sahlqvist for the order-type $\epsilon_p=1, \epsilon_q=\partial$, are inductive, but not Sahlqvist, for the natural order-type $\epsilon_p=\epsilon_q=1$ and dependency order $p<_{\Omega}q$.

\end{example}

\subsection{\textsf{ALBA}$^r$ succeeds on inductive inequalities}
\label{sec:ALBA on inductive}
In the present subsection, we discuss the success of \textsf{ALBA}$^r$ in both the DLR and the HAR setting simultaneously. We will use the symbol $\mathcal{L}$ to refer generically to either of DLR or HAR. The treatment of the present subsection is very similar to that of \cite[Section 10]{ConPal12}, hence, in what follows, we expand only on details which are specific to the regular setting. Let us start with some auxiliary definitions and lemmas. Unlike the corresponding definitions in \cite[Section 10]{ConPal12}, the definitions below are given in terms of the positive classification (cf.\ \cite[Section 6.2]{CoGhPa14}).

\begin{definition}
\label{def:epsilon conservative}
Given an order type $\epsilon \in \{1, \partial\}^{n}$, a signed generation tree $\ast \phi$ of a term $\phi(p_1, \ldots, p_n) \in \mathcal{L}^+$ is called \emph{$\epsilon$-conservative} if all connectives occurring on $\epsilon$-critical branches of $\ast \phi$ are from the base language $\mathcal{L}$. 
\end{definition}
The next definition extends the notion of inductive terms and inequalities to $\mathcal{L}_{\mathit{term}}^+$ and $\mathcal{L}^+$, essentially by keeping Definition \ref{Inductive DLR:Ineq:Def} intact and simply forbidding connectives belonging properly to the extended language on critical branches. Nevertheless, since this definition will be used extensively, we write it out in full here.
\begin{definition}[$(\Omega, \epsilon)$-inductive $\mathcal{L}^+$-terms and inequalities]\label{Inductive+:Ineq:Def}
For any order type $\epsilon$ and any irreflexive and transitive relation $\Omega$ on $p_1,\ldots p_n$, the generation tree $*s$ $(* \in \{-, + \})$ of a $\mathcal{L}^+$-term $s(p_1,\ldots p_n)$ is \emph{$(\Omega, \epsilon)$-inductive} if
\begin{enumerate}
\item it is $\epsilon$-conservative, and

\item for each $1 \leq i \leq n$, on every $\epsilon$-critical branch with leaf labelled $p_i$ is good (cf.\ Definition \ref{Excellent:Branch:Def}), and moreover, every binary node $\star(\alpha\circ \beta)$ in $P_1$ for $\star\in \{+, -\}$, such that the critical branch lies in $\beta$ satisfies the following conditions:
\begin{enumerate}
\item $\epsilon^\partial(\star\alpha)$ (resp.\ $\epsilon(\star\alpha)$) if $\circ$ is positive (resp.\ negative) in the first coordinate, and
\item $p_j <_{\Omega} p_i$ for every $p_j$ occurring in $\alpha$.
\end{enumerate}
\end{enumerate}
An $\mathcal{L}^+$-inequality $s \leq t$ is \emph{$(\Omega, \epsilon)$-inductive} if the trees $+s$ and $-t$ are both $(\Omega, \epsilon)$-inductive.
\end{definition}
Notice that in the DLR setting, $\circ$ in item (a) above can only be $+\lor$ and $-\land$, and hence the options in brackets are ruled out.

The next definition makes use of auxiliary notions in Definition \ref{Excellent:Branch:Def}.

\begin{definition}[Definite $(\Omega, \epsilon)$-inductive $\mathcal{L}^+$-terms and inequalities]\label{Definite:Inductive:Def}
For any $\mathcal{L}^+$-term $s(p_1,\ldots p_n)$  such that the signed generation tree $*s$ $(* \in \{-, + \})$ is  $(\Omega, \epsilon)$-inductive, $*s$ is \emph{definite $(\Omega, \epsilon)$-inductive} if, in addition,
there are no occurences of $+\vee$ or $-\wedge$ nodes in the segment $P_2$ of any $\epsilon$-critical branch of $*s$.
%
The term $s$ is {\em definite} $(\Omega, \epsilon)$-{\em left inductive} (resp.\ {\em definite} $(\Omega, \epsilon)$-{\em right inductive}) if $+s$ (resp.\ $-s$) is definite $(\Omega, \epsilon)$-inductive. An inequality $s \leq t \in \mathcal{L}_{\mathit{term}}$ is \emph{definite $(\Omega, \epsilon)$-inductive} if the trees $+s$ and $-t$ are both definite $(\Omega, \epsilon)$-inductive.
\end{definition}

The definition of definite inductive inequalities is meant to capture the syntactic shape of inductive inequalities after preprocessing (see Lemma \ref{Pre:Process:Lemma} below). During preprocessing, all occurences of $+\lor$ and $-\land$ in the segment $P_2$ of every critical branch can be surfaced and then eliminated via exhaustive applications of the splitting rule.


The proof of the following lemma is analogous to that of \cite[Lemma 10.4]{ConPal12}.
\begin{lemma}\label{Pre:Process:Lemma}
Let $\{ s_i \leq t_i\}$ be the set of inequalities obtained by preprocessing an $(\Omega, \epsilon)$-inductive $\mathcal{L}$-inequality $s \leq t $. Then each $s_i \leq t_i$ is a definite $(\Omega, \epsilon)$-inductive inequality.
\end{lemma}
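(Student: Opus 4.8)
The statement to be proved is Lemma \ref{Pre:Process:Lemma}: preprocessing an $(\Omega,\epsilon)$-inductive $\mathcal{L}$-inequality $s\leq t$ yields a set $\{s_i\leq t_i\}$ of \emph{definite} $(\Omega,\epsilon)$-inductive inequalities. The plan is to follow closely the strategy of \cite[Lemma 10.4]{ConPal12}, tracking the effect of each preprocessing operation on the signed generation trees $+s$ and $-t$. The preprocessing phase consists of three types of steps: (1) the distribution rules that push down occurrences of $+\wedge$, $+f$ (for $\eta_f=1$), $-g$ (for $\eta_g=\partial$), $+k$, $-l$ over $+\vee$ (and their duals), (2) the splitting rules, and (3) the monotone/antitone variable-elimination rules. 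I would argue that (i) each step preserves $(\Omega,\epsilon)$-inductivity, and (ii) after exhaustive application, the resulting inequalities are moreover \emph{definite}, i.e.\ contain no $+\vee$ or $-\wedge$ node in the segment $P_2$ of any $\epsilon$-critical branch.

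\textbf{Key steps.} First I would treat the distribution rules. The effect of a distribution step is purely local: it rewrites a subterm of the form (modulo signs) a Skeleton/SAC node applied to a $+\vee$ into a $+\vee$ of two copies of that node. One checks that this operation (a) does not create new $\epsilon$-critical branches and maps old ones to branches with the same sequence of node \emph{types} (Skeleton nodes stay Skeleton, PIA/SMP stay PIA/SMP), so goodness of critical branches is preserved; (b) for each binary SRR node $\ast(\alpha\circ\beta)$ encountered, the side term $\alpha$ is either unchanged or replaced by a copy of an existing side term, so conditions 2(a)--2(b) of Definition \ref{Inductive+:Ineq:Def} (the $\epsilon^\partial(\star\alpha)$ requirement and the $\Omega$-descent $p_j<_\Omega p_i$) continue to hold with the same dependency order $\Omega$. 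Crucially, the distribution rules are exactly designed to \emph{surface} the $+\vee$ and $-\wedge$ nodes lying in $P_2$ of critical branches: a $+\vee$ in $P_2$ is, by definition of a good branch, dominated only by Skeleton/SAC nodes, and each such dominating node is of the shape handled by rule 1, so the $+\vee$ can be pushed all the way up to the root of the Skeleton part. Next, the splitting rules applied to a top-level $+\vee$ (antecedent side) or $-\wedge$ eliminate these surfaced nodes, splitting $s\leq t$ into finitely many smaller inequalities; again one verifies inductivity is inherited by each conjunct, since splitting only removes a topmost $+\vee$/$-\wedge$ and leaves all critical branches and all SRR side-conditions intact. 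Finally, the monotone/antitone variable-elimination rules replace certain $\epsilon$-non-critical occurrences of a variable by $\bot$ or $\top$; this only prunes subtrees and cannot disturb critical branches, their goodness, or the SRR conditions, so inductivity and definiteness are preserved. Assembling: after exhaustive application of rules 1--3, every $+\vee$/$-\wedge$ that was in some $P_2$ has been surfaced (rule 1) and eliminated (rule 2), so each output $s_i\leq t_i$ is $(\Omega,\epsilon)$-inductive \emph{and} has no $+\vee$/$-\wedge$ in any $P_2$, i.e.\ is definite $(\Omega,\epsilon)$-inductive.

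\textbf{Main obstacle.} The delicate point is the claim that distribution rule 1 genuinely \emph{suffices} to surface \emph{all} $+\vee$ / $-\wedge$ nodes occurring in $P_2$ of an $\epsilon$-critical branch --- equivalently, that every node strictly above such a $+\vee$ on the branch is of a shape to which rule 1 applies. This requires carefully invoking the definition of a good branch: $P_2$ consists only of Skeleton/SAC nodes, and one must check case-by-case (using Table \ref{Join:and:Meet:Friendly:Table:HAR:DLR}) that each Skeleton node --- $+\wedge$, $-\wedge$, $+\vee$, $-\vee$, $+f$, $-g$, $+k$, $-l$, and in the HAR case also $-\rightarrow$ --- either is itself a $+\vee$/$-\wedge$ (hence gets surfaced in turn, proceeding bottom-up) or is one of the connectives listed in rule 1 as distributing over $+\vee$ in the relevant coordinate. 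The subtlety is coordinate-sensitivity for the $k,l$ nodes (rule 1 distributes $+k$ over $+\vee$ only in coordinates where $\epsilon_k(i)=1$, etc.), so one must confirm that along a critical branch the branch always enters $k$/$l$ through a coordinate of the matching polarity --- which is precisely what being on an $\epsilon$-critical branch through a SAC node guarantees. I expect this bookkeeping, rather than any conceptual difficulty, to be the bulk of the work; it is entirely parallel to \cite[Lemma 10.4]{ConPal12}, with the only genuinely new cases being the $k,l$ nodes and (for HAR) the $-\rightarrow$ Skeleton node, both of which are handled by the same distribution-then-split pattern.
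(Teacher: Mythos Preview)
Your proposal is correct and follows essentially the same approach as the paper, which simply states that the proof is analogous to \cite[Lemma 10.4]{ConPal12} without giving further details. In fact you have supplied considerably more detail than the paper itself, including correctly identifying the new cases (the $k,l$ nodes and, in the HAR setting, the $-\rightarrow$ Skeleton node) that require checking beyond the original DML argument.
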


\begin{definition}[Definite good shape]
An inequality $s \leq t \in \mathcal{L}^+$ is in \emph{definite $(\Omega, \epsilon)$-good shape} if either of the following conditions hold:
\begin{enumerate}
\item $s$ is pure, $+t$ is definite $(\Omega, \epsilon)$-inductive, and moreover, if $+t$ contains a skeleton node on an $\epsilon$-critical branch, then $s$ is a nominal.\footnote{Note that the sides have been swapped around: We require that the \emph{righthand} side of the inequality must be \emph{left} inductive. This is so because the first approximation rule swaps the sides of inequalities.}
\item $t$ is pure, $-s$ is definite $(\Omega, \epsilon)$-inductive and moreover, if $-s$ contains a skeleton node on an $\epsilon$-critical branch, then $t$ is a conominal.
\end{enumerate}
\end{definition}

\noindent Clearly, if an inequality $s\leq t$ is definite $(\Omega, \epsilon)$-inductive, then the two inequalities obtained by applying the first approximation rule to it are in definite $(\Omega, \epsilon)$-good shape. Next, we would like to prove a `good-shape lemma' for definite inductive inequalities. In particular, we would like to show that the application of the reduction rules does not spoil good shape. Actually the application the following rules might spoil good shape:
\begin{prooftree} \AxiomC{$x\wedge y\leq z$}\doubleLine\UnaryInfC{$x\leq y\rightarrow z$}
\AxiomC{$z\leq y \vee x $}\doubleLine\UnaryInfC{$z - y\leq x$}
 \noLine\BinaryInfC{}
\end{prooftree}

This happens e.g.\ when $z$ is pure and $y$ is not. A solution to this is provided by allowing only applications of the rules above which are {\em restricted} to the cases in which the term $y$ that
switches sides is pure.
\begin{lemma}\label{Good:Shape:Lemma}
If $s \leq t$ is in definite $(\Omega, \epsilon)$-good shape, then any inequality $s' \leq t'$ obtained from $s \leq t$, by either the application of a splitting rule, of an approximation rule, or of a residuation rule for a unary connective from-top-to-bottom, or of the application of a residuation rule restricted as indicated above, is again in definite $(\Omega, \epsilon)$-good shape. Moreover, any side condition introduced by an application of an adjunction rule is pure on both sides.
\end{lemma}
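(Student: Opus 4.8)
The plan is to prove Lemma \ref{Good:Shape:Lemma} by a case analysis on which reduction rule is applied, following closely the structure of the corresponding good-shape argument in \cite[Section 10]{ConPal12}, and highlighting only the points where the regular connectives $k$ and $l$ (and the new connectives $\blacksquare_f,\Diamondblack_g,\blacktriangleleft_f,\blacktriangleright_g$) require a separate treatment. Throughout, I would keep track of the invariant carried by a definite good-shape inequality: one side is pure, the other side is a definite $(\Omega,\epsilon)$-inductive signed generation tree, and the ``nominal/conominal at the root'' clause governs whether a Skeleton node has already been surfaced on a critical branch. Since the first-approximation step has swapped the sides (cf.\ the footnote in the definition of definite good shape), the purely syntactic bookkeeping is that Skeleton nodes are peeled off from the top by splitting/approximation rules, while PIA (i.e.\ SMP/SRR) nodes are peeled off by residuation/adjunction rules, which is exactly the division of labour in Definition \ref{Inductive+:Ineq:Def}.

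First I would handle the splitting rules: an application of $\alpha\leq\beta\wedge\gamma \rightsquigarrow \alpha\leq\beta,\ \alpha\leq\gamma$ (and its dual) simply removes a $+\wedge$ (resp.\ $-\vee$) Skeleton node at the root of the non-pure side and passes the same pure side down; since every subtree of a $(\Omega,\epsilon)$-inductive tree at a Skeleton node is again $(\Omega,\epsilon)$-inductive (the dependency conditions 2(a)--(b) refer only to SRR nodes, and splitting does not create any), definite good shape is preserved, and no side conditions are generated. Next the approximation rules: for the unary modal cases ($+f$ with $\eta_f=1$, etc.) the argument is verbatim that of \cite{ConPal12}, with the only novelty that the rule splits off a \emph{side condition} $\nomi\leq f(\bot)$ (or $f(\top)\leq\cnomm$, etc.), which is pure on both sides by inspection, so the last sentence of the lemma is satisfied. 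The genuinely new cases are the approximation rules for the $m$-ary connectives $k$ and $l$: here I would check that in the disjunct containing the ``large'' inequality $\nomi\leq k(\overline{\nomj}_P,\overline{\bot}_{\epsilon_k^+\setminus P},\overline{\cnomm}_N,\overline{\top}_{\epsilon_k^-\setminus N})$ every propositional argument of $k$ has been replaced by a nominal or conominal, hence this inequality is pure; and in each conjunct $\nomj_e\leq\phi_e$ or $\psi_e\leq\cnomm_e$ one side is a fresh nominal/conominal (pure) while the other is the subtree $\pm\phi_e$, which is definite $(\Omega,\epsilon)$-inductive as a subtree of the original; the only subtlety is that a coordinate of $k$ on which the critical branch does \emph{not} run may still contain variables, but by the inductive definition of definite good shape those subtrees are allowed to be arbitrary on the pure side --- wait, they are not pure. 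Here I would invoke the fact that the whole term on the non-pure side is definite inductive, so any coordinate not carrying a critical branch is $\epsilon^\partial$-uniform (condition 2(a)) and its variables are $\Omega$-below, which is precisely what is needed to keep the resulting inequality in good shape; this is the same phenomenon as in \cite{ConPal12} and the argument transfers unchanged.

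Then I would treat the residuation rules for unary connectives from top to bottom --- e.g.\ $\blacksquare_f$, $\Diamondblack_g$, $\blacktriangleleft_f$, $\blacktriangleright_g$ and, on the $\mathcal L = \mathrm{HAR}$ side, the $\rightarrow$-rules --- observing that these act on the non-pure side by removing a PIA (SMP or SRR) node and that, by the good-shape hypothesis, after the Skeleton part $P_2$ has been consumed the critical branch is entirely within the PIA segment $P_1$, where residuation is exactly the intended move; definite good shape is preserved because the conditions in Definition \ref{Inductive+:Ineq:Def}(2) for SRR nodes lying between the node being removed and the leaf are untouched, and the side-subterm $\alpha$ of an SRR node that gets sent to the pure side is required to be $\epsilon^\partial$-uniform with $\Omega$-lower variables, hence (after the earlier eliminations made possible by the dependency order) pure at the time the rule is applied --- which is precisely why the residuation rules are \emph{restricted} in the statement. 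The adjunction rules introduce side conditions ($f(\bot)\leq\psi$, $\phi\leq g(\top)$, etc.) which are immediately seen to be pure on both sides, giving the final clause. The main obstacle, as in \cite{ConPal12}, is the bookkeeping for the restricted residuation rule: one must argue that by the time a residuation rule needs to move a subterm $\alpha$ to the other side, all the variables in $\alpha$ (which are $\Omega$-strictly below the critical variable $p_i$) have already been eliminated by prior Ackermann applications, so that $\alpha$ is in fact pure and the restricted rule is applicable; this requires a careful induction on the dependency order $\Omega$ intertwined with the execution of the algorithm, exactly as in \cite[Section 10]{ConPal12}, and I would simply point to that argument, noting that the presence of the regular connectives $k,l,\blacksquare_f,\ldots$ adds no new difficulty since they never carry a variable on a critical branch inside a SRR side-subterm (by $\epsilon$-conservativity) and behave order-theoretically like their normal counterparts for the purposes of this bookkeeping.
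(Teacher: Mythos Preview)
Your overall approach---a case analysis on the applied rule, deferring most cases to \cite[Lemma 10.6]{ConPal12} and treating the new adjunction and $m$-ary approximation rules separately---is exactly the paper's. The paper's proof is in fact much shorter than yours: it cites \cite{ConPal12} and works out one representative adjunction case, noting that the side condition $f(\bot)\leq t$ is pure because $t$ is already pure by the good-shape hypothesis (case 2), and that $s'\leq\blacksquare_f t$ inherits definite good shape from $f(s')\leq t$.

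The main problem with your write-up is a conflation of scope. This lemma is a \emph{one-step} preservation result: given an inequality in good shape and a single application of one of the listed rules, the output is again in good shape. The restriction on the binary residuation rules (that the term switching sides be pure) is a \emph{hypothesis} here, not something you must establish. Your final paragraph---arguing that ``by the time a residuation rule needs to move a subterm $\alpha$ to the other side, all the variables in $\alpha$ \dots have already been eliminated by prior Ackermann applications'' and invoking ``a careful induction on the dependency order $\Omega$ intertwined with the execution of the algorithm''---is addressing the wrong question; that global applicability argument belongs to Proposition \ref{Trnsfrm:Indctv:To:Rdcd:Acrmn:Lemma} and Theorem \ref{Thm:ALBA:Success:Inductive}, not to this lemma. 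Two smaller points: (i) there are no residuation rules for unary connectives in \textsf{ALBA}$^r$; the unary $f,g$ are handled by the \emph{adjunction} rules, and $\blacksquare_f,\Diamondblack_g,\blacktriangleleft_f,\blacktriangleright_g$ are connectives those rules \emph{introduce}, not connectives on which a rule acts; (ii) your worry about coordinates of $k$ not carrying a critical branch, and its resolution via condition 2(a), is off---2(a) constrains SRR nodes, while $+k$ is SAC. The correct reason each resulting $\nomj_e\leq\phi_e$ (resp.\ $\psi_e\leq\cnomm_e$) is in good shape is simply that the pure side is a fresh nominal (resp.\ conominal) and the other side is a subtree of a definite $(\Omega,\epsilon)$-inductive tree, hence itself definite $(\Omega,\epsilon)$-inductive.
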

\begin{proof}
The proof of the lemma above is analogous to that of \cite[Lemma 10.6]{ConPal12}. We only discuss the rules and the additional statement specific to the regular setting. Consider for instance the case in which $s$ is definite inductive, $t$ is pure and the root of $s$ is $f$ for $\eta_f = 1$. Then the following adjunction rule is applicable:
\begin{center}
\AxiomC{$f(s')\leq t$}
\RightLabel{}
\UnaryInfC{$f(\bot)\leq t\;\;\;s'\leq\blacksquare_{f} t$}
\DisplayProof
\end{center}
Then, both inequalities in the conclusion are in definite $(\Omega, \epsilon)$-good shape. Indeed, the side-condition is all pure, as required by the second part of the statement, and $s'\leq\blacksquare_{f} t$ is in definite good shape, because otherwise, the inequality $f(s')\leq t$ would not be, contrary to the assumptions. The remaining cases are analogous and are omitted.
\end{proof}
\begin{definition}[$(\Omega, \epsilon)$-Ackermann form]\label{Omega:Epsilon:Acrmann:Def}
A set of $\mathcal{L}^+$-inequalities $\{s_i \leq t_i \}_{i \in I}$ is in \emph{reduced $1$-Ackermann form with respect to a variable $p$} if, for every $i \in I$, either

\begin{enumerate}
\item $s_i$ is pure and $t_i = p$, or
\item $s_i$ is positive in $p$ and $t_i$ is negative in $p$.
\end{enumerate}
Similarly, the set $\{s_i \leq t_i \}_{i \in I}$ is in \emph{reduced $\partial$-Ackermann form} with respect to a variable $p$ if, for every $i \in I$, either
\begin{enumerate}
\item $s_i = p$ and $t_i$ is pure, or
\item $s_i$ is negative in $p$ and $t_i$ is positive in $p$.
\end{enumerate}
For any irreflexive, transitive ordering $\Omega$ on $p_1, \ldots, p_n$ and any order-type $\epsilon = (\epsilon_1, \ldots, \epsilon_n)$, a set $\{s_i \leq t_i \}_{i \in I}$ of inequalities is in \emph{reduced $(\Omega, \epsilon)$-Ackermann form} if it is in reduced $\epsilon_k$-Ackermann form with respect to every $\Omega$-minimal variable $p_k$.
\end{definition}

\begin{prop}\label{Trnsfrm:Indctv:To:Rdcd:Acrmn:Lemma}
Any finite set $\{s_i \leq t_i \}_{i \in I}$ of inequalities which are in definite $(\Omega, \epsilon)$-good shape can be transformed into a set $\{s'_i \leq t'_i \}_{i \in I'}$ which is in reduced $(\Omega, \epsilon)$-Ackermann form, through the exhaustive and safe application, only to non-pure inequalities, of the
$\wedge$-splitting, $\vee$-splitting, approximation, and unary residuation rules top-to-bottom, as well as the restricted application of the binary residuation rules top-to-bottom.
\end{prop}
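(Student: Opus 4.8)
The plan is to adapt the proof of the corresponding statement in \cite[Section 10]{ConPal12} (specifically the analogue of its Lemma/Proposition transforming definite inductive inequalities into reduced Ackermann form), using the auxiliary results already established in this section --- Lemma \ref{Good:Shape:Lemma} (preservation of definite good shape under the reduction rules), Lemma \ref{Pre:Process:Lemma}, and the soundness/invertibility discussion of the rules in Section \ref{subsec:correctness}. The argument is a terminating induction on the dependency order $\Omega$, processing the $\Omega$-minimal variables first; the only genuinely new points are (a) the presence of the non-unary regular connectives $k$ and $l$ and their approximation/adjunction rules, and (b) the restriction to \emph{safe} applications and to non-pure inequalities, so I will concentrate the write-up on exactly these points and refer to \cite{ConPal12} for the rest.

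First I would fix an $\Omega$-minimal variable $p_k$ and argue that, by the definite $(\Omega,\epsilon)$-good-shape hypothesis, every non-pure inequality in the set in which $p_k$ occurs $\epsilon_k$-critically has the shape required so that the skeleton part ($P_2$, consisting only of SAC nodes modulo the swap caused by first approximation) sits on the outside. I would then apply, exhaustively and only to non-pure inequalities, the splitting rules to eliminate any surfaced $+\vee$/$-\wedge$ (definite good shape guarantees none remain on critical branches, so these only arise from the PIA part or from non-critical subterms), followed by the approximation rules --- including the $k$- and $l$-approximation rules of Section \ref{subsec:correctness} --- to surface the PIA subterms containing the critical occurrences of $p_k$. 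By Lemma \ref{Good:Shape:Lemma}, each such step preserves definite good shape; the side conditions introduced by the $k$- and $l$-approximation rules are pure (they contain only the freshly introduced nominals/conominals, $\top$ and $\bot$), so they land in the part $S_1$ of the eventual Ackermann-form partition and cause no trouble. Then I would apply the unary adjunction rules for $f,g$ top-to-bottom and the restricted binary residuation rules (only when the term switching sides is pure) to move everything except the minimal critical occurrences of $p_k$ out of the way, again invoking Lemma \ref{Good:Shape:Lemma} for preservation of good shape and for the purity of the adjunction side conditions; by construction of the restricted residuation rules the execution remains safe in the sense of Definition \ref{def:safe:execution}.

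The key structural point is that, since $p_k$ is $\Omega$-minimal and the inequalities are $(\Omega,\epsilon)$-inductive, every $\alpha$-subterm attached to an SRR node on a critical branch through $p_k$ contains only variables strictly $\Omega$-below $p_k$ --- hence contains no variable at all, i.e.\ is pure once we have processed all $\Omega$-smaller variables --- so the restricted binary residuation rules are always applicable where needed and the resulting displayed inequalities with $p_k$ on one side are in reduced $\epsilon_k$-Ackermann form (item 1 of Definition \ref{Omega:Epsilon:Acrmann:Def} when $\epsilon_k$-critical occurrences have been isolated as $\alpha\le p_k$ or $p_k\le\alpha$, item 2 for the remaining inequalities, which are positive/negative in $p_k$ with the right polarity because everything on their critical branches is SAC). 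Since $\Omega$ is irreflexive and transitive and the variable set is finite, iterating over all $\Omega$-minimal variables terminates and yields a set in reduced $(\Omega,\epsilon)$-Ackermann form.

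The main obstacle I anticipate is the bookkeeping for the non-unary connectives $k$ and $l$: unlike the unary case, their approximation rules split a coordinate-indexed family of subterms, so I must check that after surfacing, every coordinate containing an $\epsilon$-critical occurrence ends up isolated (via a fresh nominal/conominal inequality) while every other coordinate either becomes pure (by the $\Omega$-minimality argument, for SRR-type coordinates) or remains SAC-headed and hence correctly signed in $p_k$ --- and that none of this disturbs the side conditions of earlier adjunction applications, which is exactly what the safety hypothesis and Lemma \ref{Good:Shape:Lemma}'s ``side conditions are pure on both sides'' clause are designed to guarantee. Once this is verified coordinatewise, the rest of the argument is a routine transcription of \cite[Lemma 10.8 / Proposition 10.9]{ConPal12}, and I would state it as such rather than reproducing it in full.
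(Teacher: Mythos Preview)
Your proposal is correct and takes essentially the same approach as the paper: both reduce the argument to \cite[Proposition 10.10]{ConPal12} (via the counterparts of \cite[Lemma 10.8, Corollary 10.9]{ConPal12}), and both invoke Lemma \ref{Good:Shape:Lemma} to ensure that the side conditions introduced by the adjunction rules are pure, which is exactly what makes the execution safe. The paper's proof is considerably terser---it simply cites the analogues in \cite{ConPal12} and records the safety observation---whereas you spell out more of the mechanics (the role of $\Omega$-minimality in making the $\alpha$-subterms at SRR nodes pure, the coordinate-wise bookkeeping for the $k$- and $l$-approximation rules), but the underlying argument is the same.

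One small clarification: your phrasing ``a terminating induction on the dependency order $\Omega$'' and ``once we have processed all $\Omega$-smaller variables'' slightly overreaches the scope of this proposition. The present statement only asks you to reach reduced $(\Omega,\epsilon)$-Ackermann form with respect to the $\Omega$-\emph{minimal} variables; the full induction on $\Omega$ (alternating reduction and Ackermann-rule applications) belongs to Theorem \ref{Thm:ALBA:Success:Inductive}. For $\Omega$-minimal $p_k$ there are no $\Omega$-smaller variables at all, so the $\alpha$-subterms at SRR nodes on $p_k$-critical branches are pure outright, not ``after processing.'' This is a matter of framing rather than a gap in the argument.
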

\begin{proof}
The proof is analogous to that of \cite[Proposition 10.10]{ConPal12}, and makes use of the counterparts, in the $\mathcal{L}$-setting, of \cite[Lemma 10.8, Corollary 10.9]{ConPal12}, which are here omitted, since their statement and proof reproduce the mentioned ones verbatim. Notice that by Lemma \ref{Good:Shape:Lemma} the side conditions do not contain proposition variables, which guarantees that if the rules are applied to display critical variable occurrences, then they are applied safely.
\end{proof}
\begin{theorem}\label{Thm:ALBA:Success:Inductive}
For each inductive inequality, there exists a safe and successful execution of \textsf{ALBA}$^r$ on it.
\end{theorem}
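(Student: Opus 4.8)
The plan is to mirror the structure of the analogous result for \textsf{ALBA} in \cite[Section 10]{ConPal12}, adapting it to the regular setting. Fix an inductive $\mathcal{L}$-inequality $s \leq t$, say $(\Omega, \epsilon)$-inductive. First I would run the preprocessing phase: by Lemma \ref{Pre:Process:Lemma}, the inequalities in $\mathsf{Preprocess}(s \leq t)$ are all definite $(\Omega, \epsilon)$-inductive, so it suffices to treat one such inequality $s' \leq t'$. Applying the first approximation rule to $s' \leq t'$ produces two inequalities $\nomi_0 \leq s'$ and $t' \leq \cnomm_0$, each of which is in definite $(\Omega, \epsilon)$-good shape (as noted in the remark following the definition of definite good shape). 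The goal is then to eliminate all propositional variables by induction on the dependency order $\Omega$.

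The core of the argument is the following iteration, carried out on the $\Omega$-minimal variables first. Given the current system, which I maintain as an invariant to be in definite $(\Omega, \epsilon)$-good shape, I apply Proposition \ref{Trnsfrm:Indctv:To:Rdcd:Acrmn:Lemma} to transform it, via exhaustive and safe applications of the splitting, approximation, and (unary and restricted binary) residuation rules, into a system in reduced $(\Omega, \epsilon)$-Ackermann form. This guarantees that, with respect to each $\Omega$-minimal variable $p_k$, the system has the shape required to apply the appropriate Ackermann rule ($RAR$ if $\epsilon_k = 1$, $LAR$ if $\epsilon_k = \partial$); performing these Ackermann substitutions eliminates all $\Omega$-minimal variables at once. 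By Lemma \ref{Good:Shape:Lemma}, none of the reduction-rule applications spoils definite good shape, and any side condition introduced by an adjunction rule is pure on both sides. After the Ackermann step, the variables previously $\Omega$-minimal are gone, and the remaining system is again in definite $(\Omega', \epsilon)$-good shape for the restriction $\Omega'$ of $\Omega$ to the surviving variables — this is exactly the inductive bookkeeping familiar from \cite{ConPal12}, relying on condition 2(b) of Definition \ref{Inductive+:Ineq:Def} to ensure that the terms substituted in for the eliminated variables are pure in the remaining critical variables. Iterating down the (finite, well-founded) order $\Omega$ eliminates all propositional variables, leaving a pure quasi-inequality; hence \textsf{ALBA}$^r$ succeeds.

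Finally, I would verify that this execution is \emph{safe} in the sense of Definition \ref{def:safe:execution}: no side condition introduced by an adjunction rule for a new connective is further modified except by Ackermann substitutions. This follows from Lemma \ref{Good:Shape:Lemma}, which tells us such side conditions are pure on both sides the moment they are produced; being pure, they contain no propositional variables, so the subsequent reduction and Ackermann rules (which act only on inequalities displaying critical variable occurrences) never touch them except to perform the Ackermann substitution itself. Thus the whole execution is safe, and combined with Theorem \ref{Crctns:Theorem} and Theorem \ref{thm:canonical} this yields that inductive inequalities are both elementary and canonical.

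The main obstacle is checking that the good-shape invariant is genuinely preserved across a full cycle, in particular across the Ackermann elimination step, in the regular setting: one must confirm that the presence of the non-unary connectives $k, l$ and of the normalized adjoints $\blacksquare_f, \Diamondblack_g, {\blacktriangleleft}_f, {\blacktriangleright}_g$ does not introduce new critical occurrences of surviving variables, and that the side conditions generated by the $k$- and $l$-approximation rules (which are pure) interact correctly with the $(\Omega, \epsilon)$-good shape bookkeeping. All of this is handled by the $\mathcal{L}$-counterparts of \cite[Lemma 10.8 and Corollary 10.9]{ConPal12}, whose statements and proofs carry over verbatim (as noted in the proof of Proposition \ref{Trnsfrm:Indctv:To:Rdcd:Acrmn:Lemma}), so the adaptation is routine once this is acknowledged.
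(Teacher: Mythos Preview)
Your proposal is correct and follows essentially the same approach as the paper's own proof: preprocess to definite $(\Omega,\epsilon)$-inductive inequalities via Lemma~\ref{Pre:Process:Lemma}, apply first approximation to reach definite good shape, invoke Proposition~\ref{Trnsfrm:Indctv:To:Rdcd:Acrmn:Lemma} to safely reach reduced $(\Omega,\epsilon)$-Ackermann form, eliminate the $\Omega$-minimal variables, observe the result is in definite $(\Omega',\epsilon)$-good shape, and iterate. Your explicit verification of safety via the purity of side conditions (Lemma~\ref{Good:Shape:Lemma}) makes explicit what the paper handles by reference to Proposition~\ref{Trnsfrm:Indctv:To:Rdcd:Acrmn:Lemma}, but the argument is the same.
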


\begin{proof}
Let $s_0 \leq t_0$ be an $(\Omega, \epsilon)$-inductive inequality. By Lemma \ref{Pre:Process:Lemma}, preprocessing $s_0\leq t_0$ will yield a finite set $\{s_i \leq t_i \}_{i \in I}$ of definite $(\Omega, \epsilon)$-inductive inequalities. The execution of the algorithm now branches and proceeds separately on each of these inequalities. Each $s \leq t \in \{s_i \leq t_i\}_{i \in I}$ is replaced with $\{ \nomi \leq s, t \leq \cnomm\}$. Notice that $\nomi \leq s$ and $t \leq \cnomm$ are in definite $(\Omega, \epsilon)$-good shape. Hence, by Proposition \ref{Trnsfrm:Indctv:To:Rdcd:Acrmn:Lemma}, the system $\{ \nomi \leq s, t \leq \cnomm\}$ can be transformed, through the \emph{safe} application of the rules of the algorithm, into a set of inequalities in reduced $(\Omega, \epsilon)$-Ackermann form. To this set, the Ackermann-rule can then be applied to eliminate all $\Omega$-minimal propositional variables.

\noindent The Ackermann rule, applied to a set of inequalities in reduced $(\Omega, \epsilon)$-Ackermann form, replaces propositional variables with pure terms, therefore the resulting set of inequalities is in definite $(\Omega', \epsilon)$-good shape, where $\Omega'$ is the restriction of $\Omega$ to the non
$\Omega$-minimal variables. Indeed, the application of an Ackermann rule turns all $\epsilon$-critical branches corresponding to $\Omega$-minimal variables into non-critical branches, and leaves the critical branches corresponding to the other variables unaffected.

Now another cycle of reduction rules, applied safely, will lead to a new set of inequalities in reduced $(\Omega', \epsilon)$-Ackermann form, from which an application of the Ackermann rule will eliminate all the $\Omega'$-minimal variables, and so on. Since the number of variables in $s_0 \leq t_0$ is finite, after a finite number of cycles the algorithm will output a set of pure inequalities.
\end{proof}

As a corollary of Theorems \ref{Crctns:Theorem}, \ref{thm:canonical}, and \ref{Thm:ALBA:Success:Inductive} we obtain:
\begin{theorem}
All inductive $\mathcal{L}$-inequalities are elementary and canonical.
\end{theorem}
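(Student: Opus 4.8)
The statement is a direct corollary of the machinery assembled in Parts~I--II, so the proof will simply stitch together Theorems~\ref{Crctns:Theorem}, \ref{thm:canonical} and~\ref{Thm:ALBA:Success:Inductive}, using the discrete dualities of Section~\ref{sec:prelimi} to pass between perfect $\mathcal{L}$-algebras and (distributive) Kripke frames with impossible worlds. Fix an inductive $\mathcal{L}$-inequality $s\leq t$. By Theorem~\ref{Thm:ALBA:Success:Inductive} there is a safe and successful execution of \textsf{ALBA}$^r$ on it; hence $s\leq t$ is an \textsf{ALBA}$^r$-inequality on which the algorithm safely succeeds, and its output $\textsf{ALBA}^r(s\leq t)$ is a finite set of \emph{pure} quasi-inequalities (containing nominals and conominals, but no propositional variables).

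\textbf{Canonicity.} Since \textsf{ALBA}$^r$ safely succeeds on $s\leq t$, Theorem~\ref{thm:canonical} applies verbatim and yields that $s\leq t$ is canonical, i.e.\ $\A\models s\leq t$ implies $\ca\models s\leq t$ for every $\mathcal{L}$-algebra $\A$.

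\textbf{Elementarity.} By Theorem~\ref{Crctns:Theorem}, for every perfect $\mathcal{L}$-algebra $\A$ one has $\A\models s\leq t$ iff $\A\models\textsf{ALBA}^r(s\leq t)$. Specialising $\A$ to the complex algebra $\mathbb{F}^{+}$ of an arbitrary (distributive) Kripke frame with impossible worlds $\mathbb{F}$ --- which by the duality propositions of Section~\ref{sec:prelimi} ranges exactly over the perfect r-BAEs (resp.\ perfect r-DLEs), with algebraic validity in $\mathbb{F}^{+}$ coinciding with frame validity in $\mathbb{F}$ --- we obtain that $\mathbb{F}$ validates $s\leq t$ iff $\mathbb{F}^{+}\models\textsf{ALBA}^r(s\leq t)$. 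Finally, because $\textsf{ALBA}^r(s\leq t)$ is pure, translating it as discussed in Section~\ref{sec:example:lemmon:corre:canoni} produces a finite conjunction of first-order sentences in the frame language (of the signature $(W,S,N)$, resp.\ $(W,\leq,S_f,S_g,N_f,N_g)$) whose conjunction holds in $\mathbb{F}$ iff $\mathbb{F}^{+}\models\textsf{ALBA}^r(s\leq t)$: nominals and conominals denote completely join- and meet-irreducibles of $\mathbb{F}^{+}$, which correspond to single worlds (resp.\ to principal up-sets and their complements), and the residual/adjoint connectives of the expanded language $\mathcal{L}^{+}_{\eta}$ (cf.\ Section~\ref{ssec:expanded language}) are computed on $\mathbb{F}^{+}$ by finitary set-theoretic operations on $S$ and $N$, so that each pure clause unravels into a first-order condition on worlds. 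Hence the class of frames validating $s\leq t$ is elementary.

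\textbf{Main obstacle.} There is essentially no obstacle here, since all the heavy lifting has been done in Theorems~\ref{Crctns:Theorem}, \ref{thm:canonical} and~\ref{Thm:ALBA:Success:Inductive}; the only genuine verification is that every pure $\mathcal{L}^{+}_{\eta}$-quasi-inequality --- including ones involving the bi-Heyting connectives and the adjoints introduced in Section~\ref{ssec:expanded language} --- has a bona fide first-order correspondent over Kripke frames with impossible worlds, which is exactly the content already sketched in the ``Third stage: output'' paragraph and requires only routine bookkeeping via the dualities of Section~\ref{sec:prelimi}.
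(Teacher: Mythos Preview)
Your proposal is correct and follows exactly the paper's approach: the paper states this theorem simply as a corollary of Theorems~\ref{Crctns:Theorem}, \ref{thm:canonical}, and~\ref{Thm:ALBA:Success:Inductive}, with no further argument. You have merely unpacked the implicit reasoning (especially for elementarity, via the dualities and the standard translation of pure quasi-inequalities) that the paper leaves to the reader.
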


\section*{Part III: Applications to Lemmon's logics}
\addcontentsline{toc}{section}{\underline{Part III: Applications to Lemmon's logics}}

\section{Strong completeness and elementarity of E2-E5}\label{sec:example:lemmon:corre:canoni}
In the present section, we apply the theory developed so far to Lemmon's logics E2-E5 
(cf.\ page \pageref{pageref:lemmonsystem}). As to E2-E4, we will show that they are strongly complete with respect to elementary classes of Kripke models with impossible worlds. Moreover, we will give a semantic proof to Kripke's statement in \cite{Kr65} that E5 coincides with S5. We have already seen (cf.\ Example \ref{eg:lemmonsahlqvist}) that each axiom involved in the axiomatization of these logics is Sahlqvist. By Theorem \ref{thm:completeness}, this implies that E2-E5 are strongly complete and elementary. In Subsection \ref{ssec:standard translation}, we adapt the definition of standard translation given in \cite[Subsection 2.5.2]{ConPal12} to the setting of Kripke frames with impossible worlds. In Subsection \ref{ssec:alba on E2 E4}, we effectively compute the first-order conditions defining their associated classes of Kripke frames with impossible worlds by providing a successful and safe run of \textsf{ALBA}$^r$ on each axiom.

\subsection{Standard translation}
\label{ssec:standard translation}
Let $L_1$ be the first-order language with equality with binary relation symbols $R$, and unary predicate symbols $P, Q, \ldots$ corresponding to the propositional variables $p, q, \ldots \in \mathsf{AtProp}$ and unary predicate symbol $N$ for normal worlds. As usual, we let $L_1$ contain a denumerable infinity of individual variables. We will further assume that $L_1$ contains denumerably infinite individual variables $i, j, \ldots$ corresponding to the nominals $\nomi, \nomj, \ldots \in \mathsf{Nom}$ and $n, m, \ldots$ corresponding to the conominals $\cnomn, \cnomm \in \mathsf{CNom}$. Let $L_0$ be the sub-language which does not contain the unary predicate symbols $P, Q, \ldots$ corresponding to the propositional variables.
Let us now define the \emph{standard translation} of ${\mathcal{L}^+_{\eta}}$ into $L_1$ recursively:
\begin{center}
\begin{tabular}{ll}
\begin{tabular}{l c l}
$\mathrm{ST}_x(\bot)$ & $:=$& $x \not\equiv x$ \\

$\mathrm{ST}_x(\top) $ & $:=$& $ x\equiv x$\\
$\mathrm{ST}_x(p) $ & $:=$ &$ P(x)$\\
$\mathrm{ST}_x(\nomj) $ & $:=$& $ j\equiv x$\\
 $\mathrm{ST}_x(\cnomm) $ & $:=$& $ x \not\equiv m$\\
$\mathrm{ST}_x(\neg \varphi) $ & $:=$ &$\neg \mathrm{ST}_x(\varphi)$\\
$\mathrm{ST}_x(\varphi\to\psi) $ & $:=$ &$ \mathrm{ST}_x(\varphi)\to \mathrm{ST}_x(\psi)$\\

\end{tabular}
&
\begin{tabular}{l c l}
$\mathrm{ST}_x(\phi \vee \psi) $ & $:=$& $ \mathrm{ST}_x(\phi) \vee \mathrm{ST}_x(\psi)$\\
$\mathrm{ST}_x(\phi \wedge \psi) $ & $:=$& $ \mathrm{ST}_x(\phi) \wedge \mathrm{ST}_x(\psi)$\\
$\mathrm{ST}_x(\Diamond \phi) $ & $:=$ &$ \neg Nx\lor\exists y (Rxy \wedge \mathrm{ST}_y(\phi))$\\
 $\mathrm{ST}_x(\Box \phi) $ & $:=$ &$ Nx\land\forall y (Rxy \rightarrow \mathrm{ST}_y(\phi))$\\
$\mathrm{ST}_x(\Diamondblack \phi) $ & $:=$& $ \exists y (Ryx \wedge \mathrm{ST}_y(\phi))$\\
$\mathrm{ST}_x({\blacksquare} \phi) $ & $:=$ & $ \forall y (Ryx \rightarrow \mathrm{ST}_y(\phi))$
\end{tabular}
\end{tabular}
\end{center}

$\mathrm{ST}_x$ extends to inequalities and quasi-inequalities as follows: for inequalities, $\mathrm{ST}_x(\phi \leq \psi) := \mathrm{ST}_x(\phi) \rightarrow \mathrm{ST}_x(\psi)$, and for quasi-inequalities, $\mathrm{ST}_x(\phi_1 \leq \psi_1 \amp \cdots \amp \phi_n \leq \psi_n \Rightarrow \phi \leq \psi) := [\mathrm{ST}_x(\phi_1 \leq \psi_1) \wedge \cdots \wedge \mathrm{ST}_x(\phi_n \leq \psi_n)] \rightarrow \mathrm{ST}_x(\phi \leq \psi)$. We also extend $\mathrm{ST}_{x}$ to finite sets of inequalities by declaring $\mathrm{ST}_{x}(\{\phi_1 \leq \psi_1, \ldots, \phi_n \leq \psi_n \}): = \bigwedge_{1 \leq i \leq n} \mathrm{ST}_x(\phi_i \leq \psi_i)$.

Observe that if $\mathrm{ST}_x$ is applied to pure terms, inequalities, or quasi-inequalities (see page \pageref{def:language}), it produces formulas in the sublanguage $L_0$. The following lemma is proved by a routine induction.

\begin{lemma}\label{lemma:Standard:Translation}
For any state $w$ in a Kripke frame with impossible worlds $\mathcal{F}$ and for every formula, inequality or quasi-inequality $\xi$ in the language $\mathcal{L}^+_{\eta}$,
\begin{enumerate}
\item $\mathcal{F}, w \Vdash \xi\quad $ iff $\quad \mathcal{F} \models \forall \overline{P} \forall \overline{j} \forall \overline{m} \mathrm{ST}_x(\xi) [x:= w]$, and
\item $\mathcal{F} \Vdash \xi\quad $ iff $\quad\mathcal{F} \models \forall x \forall \overline{P} \forall \overline{j} \forall \overline{m} \mathrm{ST}_x(\xi)$,
\end{enumerate}
where $\overline{P}$, $\overline{j}$, and $\overline{m}$ are, respectively, the vectors of all predicate symbols corresponding to propositional variables, individual variables corresponding to nominals, and individual variables corresponding to conominals, occurring in $\mathrm{ST}_x(\xi)$.
\end{lemma}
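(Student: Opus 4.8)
The plan is to first establish the "local" content of item~(1) for \emph{formulas} under a fixed interpretation, then lift it successively to inequalities, to quasi-inequalities, and finally to item~(2). Fix $\mathcal{F}=(W,S,N)$ and let $\mathcal{M}=(\mathcal{F},V)$ range over Kripke models with impossible worlds. An $L_1$-interpretation $M$ over $W$ (assigning $P^M\subseteq W$ to each predicate corresponding to $p\in\mathsf{AtProp}$, the set $N$ to the predicate $N$, $S$ to $R$, and points $j^M,m^M\in W$ to the individual variables corresponding to nominals and conominals) determines a valuation $V$ on $\mathcal{F}^+$ by $V(p)=P^M$, $V(\nomj)=\{j^M\}$, $V(\cnomm)=W\setminus\{m^M\}$; conversely every valuation sending propositional variables to arbitrary subsets, nominals to completely join-irreducibles of $\mathcal{F}^+=\mathcal{P}(W)$ (the singletons) and conominals to completely meet-irreducibles (the co-singletons) arises this way. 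First I would prove, by induction on the complexity of a formula $\xi\in\mathcal{L}^+_\eta$, that for all $w\in W$,
\[
\mathcal{M},w\Vdash\xi \quad\Longleftrightarrow\quad M\models\mathrm{ST}_x(\xi)[x:=w].
\]

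The base cases are immediate: $\bot,\top$ are trivial; $\mathrm{ST}_x(p)=P(x)$ matches $w\in V(p)$; $\mathrm{ST}_x(\nomj)=(j\equiv x)$ matches $w\in\{j^M\}$; $\mathrm{ST}_x(\cnomm)=(x\not\equiv m)$ matches $w\in W\setminus\{m^M\}$. The Boolean cases ($\wedge,\vee,\neg,\to$) follow at once from the induction hypothesis, since $\mathrm{ST}_x$ commutes with them. For $\Box\phi$ and $\Diamond\phi$ one unfolds the impossible-worlds satisfaction clauses recalled after the definition of Kripke models: $\mathcal{M},w\Vdash\Box\phi$ iff $w\in N$ and $\forall v\,(wSv\Rightarrow v\Vdash\phi)$, which by induction hypothesis is $M\models\bigl(Nx\wedge\forall y(Rxy\to\mathrm{ST}_y(\phi))\bigr)[x:=w]$; dually for $\Diamond$, with the disjunct $\neg Nx$ encoding $w\notin N$. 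For the backward-looking connectives one uses the relational clauses $\mathcal{M},w\Vdash\blacksquare\phi$ iff $\forall v(vSw\Rightarrow v\Vdash\phi)$ and $\mathcal{M},w\Vdash\Diamondblack\phi$ iff $\exists v(vSw\wedge v\Vdash\phi)$, matching $\mathrm{ST}_x(\blacksquare\phi)$ and $\mathrm{ST}_x(\Diamondblack\phi)$ by the induction hypothesis. Here one records the observation that these relational clauses coincide with the algebraic interpretation of $\blacksquare_f$ and $\Diamondblack_g$ on $\mathcal{F}^+$ (as adjoints of the normalizations of $\Diamond_S$) precisely when the accompanying topological-adequacy side conditions $f(\bot)\leq\psi$, resp.\ $\phi\leq g(\top)$, are in force — which is always the case in the pure (quasi-)inequalities output by $\mathsf{ALBA}^r$, the only (quasi-)inequalities to which $\mathrm{ST}_x$ is ever applied.

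Lifting to (1) for inequalities: $\mathcal{F},w\Vdash(\phi\leq\psi)$ means that under every admissible valuation $V$, $\mathcal{M},w\Vdash\phi$ implies $\mathcal{M},w\Vdash\psi$; via the correspondence $M\leftrightarrow V$ above and the formula-level claim this is equivalent to $M\models(\mathrm{ST}_x(\phi)\to\mathrm{ST}_x(\psi))[x:=w]$ for all $M$, i.e.\ to $\mathcal{F}\models\forall\overline{P}\,\forall\overline{j}\,\forall\overline{m}\,\mathrm{ST}_x(\phi\leq\psi)[x:=w]$. The quasi-inequality case is identical, the outer implication between the conjunction of antecedent inequalities and the consequent being carried verbatim by the $\mathrm{ST}_x$-clause for quasi-inequalities. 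Item~(2) then follows from~(1) by universally quantifying over the distinguished point: $\mathcal{F}\Vdash\xi$ iff $\mathcal{F},w\Vdash\xi$ for every $w\in W$ iff $\mathcal{F}\models\forall x\,\forall\overline{P}\,\forall\overline{j}\,\forall\overline{m}\,\mathrm{ST}_x(\xi)$. The only point requiring more than mechanical care is the bookkeeping around nominals and conominals: one must verify that the first-order quantifiers $\forall\overline{j},\forall\overline{m}$ range exactly over the admissible valuations of $\mathsf{Nom}\cup\mathsf{CNom}$ on $\mathcal{F}^+$, which uses that the completely join-irreducibles (resp.\ meet-irreducibles) of $\mathcal{P}(W)$ are precisely the singletons (resp.\ co-singletons); together with the remark above reconciling the relational and adjoint readings of $\blacksquare$ and $\Diamondblack$, this is all that is non-routine, and the rest is a straightforward structural induction.
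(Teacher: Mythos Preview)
Your proposal is correct and follows exactly the approach the paper indicates: the paper's proof reads, in its entirety, ``proved by a routine induction,'' and your structural induction on formulas followed by lifting to (quasi\nobreakdash-)inequalities and then to item~(2) is precisely that routine. You have in fact gone further than the paper by flagging the one genuinely non-mechanical point---that the relational clauses for $\blacksquare$ and $\Diamondblack$ in $\mathrm{ST}_x$ match the algebraic adjoints-of-normalizations only under the topological-adequacy side conditions $f(\bot)\leq\psi$, $\phi\leq g(\top)$---and correctly observing that these are always present in the pure output of $\mathsf{ALBA}^r$, which is the only context in which the lemma is used.
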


\subsection{Strong completeness and elementarity of E2-E5}
\label{ssec:alba on E2 E4}
In the present subsection, we provide successful and safe runs of the algorithm $\mathsf{ALBA}^{r}$ on each of the axioms in E2-E5. Thereafter, we use the standard translation defined in the previous subsection to give an interpretation of the pure inequalities on Kripke frames with impossible worlds.
\begin{prop}\label{thm:lemmonalgcan}
The following axioms in Lemmon's system are canonical.

\begin{center}
\vspace{2mm}
\begin{tabular}{llll}
\vspace{2mm}
(1) & $\Box(p\to q)\to\Box(\Box p\to\Box q)$ & (1') & $\Box(p\to q)\to(\Box p\to\Box q)$\\
\vspace{2mm}
(2) & $\Box p\to p$ & & \\ 
\vspace{2mm}
(4) & $\Box p\to\Box\Box p$ & (5) & $\neg\Box p\to\Box\neg\Box p$.\\
\end{tabular}
\end{center}
\end{prop}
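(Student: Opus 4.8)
The plan is to obtain the canonicity of axioms (1), (1'), (2), (4), (5) as an immediate corollary of the general theory developed in Parts I and II, by exhibiting each axiom as a (regular) Sahlqvist or inductive inequality in the appropriate signature. First I would rewrite each axiom as an inequality between DLR-terms (or HAR-terms, in the cases of (1) and (1')): for instance $\Box p \to p$ becomes $\Box p \leq p$, $\Box p \to \Box\Box p$ becomes $\Box p \leq \Box\Box p$, and $\neg\Box p \to \Box\neg\Box p$ becomes $\neg\Box p \leq \Box\neg\Box p$. Then I would invoke Example \ref{eg:lemmonsahlqvist}, which already records that (2) and (4) are DLR-Sahlqvist for $\epsilon_p = 1$, that (5) is DLR-Sahlqvist for $\epsilon_p = \partial$, and that (1) and (1') are DLR-Sahlqvist for the order-type $\epsilon_p = 1, \epsilon_q = \partial$. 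Since DLR-Sahlqvist inequalities are in particular inductive, Theorem \ref{thm:additivecan} (or, alternatively, the combination of Theorems \ref{Crctns:Theorem}, \ref{thm:canonical} and \ref{Thm:ALBA:Success:Inductive}) yields canonicity directly.

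The key steps, in order, are: (i) translate each axiom into an inequality and fix the relevant order-type; (ii) inspect the signed generation trees of the antecedent and consequent, classifying each node via Table \ref{Join:and:Meet:Friendly:Table:HAR:DLR}, and verify that every $\epsilon$-critical branch is excellent (i.e.\ a concatenation of an SMP-path from the leaf followed by an SAC-path) — here one must be careful that in (1) and (1') the occurrence of $\to$ forces the choice $\epsilon_q = \partial$ so that the critical branch through $q$ passes only through the positive coordinate of $\to$, and that the occurrence of $\neg$ in (5) is what dictates $\epsilon_p = \partial$; (iii) conclude that each inequality is ($\epsilon$-)regular Sahlqvist, hence inductive; (iv) apply Theorem \ref{thm:additivecan} to get canonicity of each. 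Since the E-systems are regular (Corollary \ref{cor:regular}) and the complex algebras of Kripke frames with impossible worlds are perfect r-BAEs/r-DLEs, the algebraic canonicity statement is exactly what is needed.

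The main obstacle is essentially bookkeeping rather than conceptual: one must correctly handle the $\Box$-operator in the regular setting, where it is interpreted as a \emph{multiplicative} (not dual-operator) map $g$, and check that the node classification of Table \ref{Join:and:Meet:Friendly:Table:HAR:DLR} applies uniformly regardless of whether $\Box\top$ is valid. For (1), the nested box $\Box(\Box p \to \Box q)$ on the consequent side needs care: after switching polarity, the subterm $\Box p$ sits in a negative coordinate of $\to$, so the branch to $p$ is \emph{not} $\epsilon$-critical for $\epsilon_p = 1$ — which is precisely why the natural order-type fails and why one needs the inductive (dependency-order) refinement $p <_\Omega q$ as noted just before this proposition; I would flag this explicitly and appeal to Definition \ref{Inductive HAR:Ineq:Def} together with the success theorem for \textsf{ALBA}$^r$ if one prefers to present (1), (1') as inductive rather than Sahlqvist. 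Modulo this caveat, the verification is routine and the canonicity follows.
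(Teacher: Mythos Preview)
Your proposal is correct and follows essentially the same approach as the paper: the paper's proof is simply to invoke Example \ref{eg:lemmonsahlqvist} (which records that all five axioms are DLR-Sahlqvist for the order-types you list) and then apply Theorem \ref{thm:additivecan}. Your additional commentary about the alternative route via Part II and the inductive classification for the natural order-type is accurate but superfluous for this proposition; the paper defers that discussion to the \textsf{ALBA}$^r$ runs that follow.
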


\begin{proof}
As discussed in Example \ref{eg:lemmonsahlqvist}, the above axioms are DLR-Sahlqvist. Hence, by Theorem \ref{thm:additivecan}, they are canonical.
\end{proof}
In what follows, we compute the first-order frame correspondents of the above axioms. By Theorem \ref{Crctns:Theorem}, each of the logics E2-E5 is strongly complete with respect to the class of frames defined by the first-order frame correspondents of its axioms (see Tables \ref{table:elementary}, \ref{table:modal:elementary} and \ref{table:lemmonsystem}).

\paragraph{}$(2)\;\; \Box p\rightarrow p$
\begin{center}
\begin{tabular}{ll}

$\forall p(\Box p\leq p)$&\\

$\forall p\forall\nomi\forall\cnomm [(\nomi\leq \Box p\;\&\; p\leq \cnomm)\Rightarrow \nomi\leq\cnomm]$&\\

$\forall p\forall\nomi\forall\cnomm [(\nomi\leq \Box \top \;\&\; \Diamondblack \nomi\leq p \;\&\; p\leq \cnomm)\Rightarrow \nomi\leq\cnomm]$& (adjunction rule for $\Box$)\\

$\forall\nomi\forall\cnomm [(\nomi\leq \Box \top \;\&\;\Diamondblack \nomi\leq \cnomm)\Rightarrow \nomi\leq\cnomm]$ & (Ackermann rule)\\

$\forall\nomi [\nomi\leq \Box \top \Rightarrow \nomi\leq\Diamondblack \nomi]$.\\

\end{tabular}
\end{center}
Using the standard translation, the last clause above translates to the sentence below, which is then further simplified:

\begin{center}
\begin{tabular}{ll}

$\forall i \forall x ((i\equiv x \to Nx)\to (i\equiv x\to \exists y(Ryx\land i\equiv y)))$\\
$\forall i\forall x (Ni\to \exists y (Ryi\land i\equiv y))$\\
$\forall i(Ni\to Rii)$.\\

\end{tabular}
\end{center}

\paragraph{}$(4)\;\; \Box p\rightarrow \Box\Box p$
\begin{center}
\begin{tabular}{ll}

$\forall p(\Box p\leq\Box\Box p)$&\\

$\forall p\forall\nomi\forall\cnomm [(\nomi\leq \Box p\;\&\; \Box\Box p\leq \cnomm)\Rightarrow \nomi\leq\cnomm]$&\\

$\forall p\forall\nomi\forall\cnomm [(\nomi\leq \Box \top \;\&\; \Diamondblack \nomi\leq p \;\&\;\Box\Box p\leq \cnomm)\Rightarrow \nomi\leq\cnomm]$& (adjunction rule for $\Box$)\\

$\forall\nomi\forall\cnomm [(\nomi\leq \Box \top \;\&\;\Box\Box \Diamondblack \nomi\leq \cnomm)\Rightarrow \nomi\leq\cnomm]$ & (Ackermann rule)\\

$\forall\nomi [\nomi\leq \Box \top \Rightarrow \nomi\leq\Box\Box\Diamondblack \nomi]$.\\

\end{tabular}
\end{center}
Using the standard translation and after some simplifying steps, the clause above we get
\begin{center}
$\forall i (Ni\to\forall y (Riy\to Ny\land \forall z(Ryz\to Riz)))$,
\end{center}

which is equivalent to
\begin{center}
$\forall i\forall y\forall z ( Ni\land Ny\land Riy\land Ryz\to Riz)\land \forall i\forall y( Ni\land Riy\to Ny).$
\end{center}

\paragraph{}The validity of $(5)\;\neg\Box p\to\Box\neg\Box p$ is equivalent to the validity of $\Diamond p\to\Box\Diamond p$.
\begin{center}
\begin{tabular}{ll}

\vspace{2mm}

$\forall p (\Diamond p\to\Box\Diamond p)$&\\

\vspace{2mm}

$\forall p\forall\nomi\forall\cnomm [(\nomi\leq \Diamond p\;\&\; \Box\Diamond p\leq \cnomm)\Rightarrow \nomi\leq\cnomm]$&\\

\vspace{2mm}

$\forall p\forall\nomi\forall\nomj\forall\cnomm [((\nomi\leq \Diamond \bot\;\parr\;(\nomi\leq \Diamond \nomj\;\&\;\nomj\leq p))\;\&\; \Box\Diamond p\leq \cnomm)\Rightarrow \nomi\leq\cnomm]$& (approximation rule for $\Diamond$)\\

$\forall p\forall\nomi\forall\cnomm [(\nomi\leq \Diamond \bot\;\&\; \Box\Diamond p\leq \cnomm)\Rightarrow \nomi\leq\cnomm]\;\&$&\\

\vspace{2mm}

$\;\forall p\forall\nomi\forall \nomj\forall\cnomm [(\nomi\leq \Diamond \nomj\;\&\;\nomj\leq p\;\&\; \Box\Diamond p\leq \cnomm)\Rightarrow \nomi\leq\cnomm]$&\\

$\forall\nomi\forall\cnomm [(\nomi\leq \Diamond \bot\;\&\; \Box\Diamond \bot\leq \cnomm)\Rightarrow \nomi\leq\cnomm]\;\&$ & \\

\vspace{2mm}

$\;\forall\nomi\forall \nomj\forall\cnomm [(\nomi\leq \Diamond \nomj\;\&\;\Box\Diamond \nomj\leq \cnomm)\Rightarrow \nomi\leq\cnomm]$ & (Ackermann rule)\\

\vspace{2mm}

$[\Diamond \bot\leq\Box\Diamond \bot]\;\&\;\forall \nomj[(\Diamond \nomj\leq\Box\Diamond \nomj]$.&\\

\end{tabular}
\end{center}

Using the standard translation and after some simplifying steps, the clause above can be rewritten as:
\begin{center}
$\forall xNx \land \forall x \forall y\forall z(Nx\land Ny\land Rxy\land Rxz\to Ryz).$
\end{center}

In what follows, we run the algorithm $\mathsf{ALBA}^{r}$ with respect to the natural order-type $\epsilon_p=1, \epsilon_q=1$. Notice that the axioms (1) and (1') in Lemmon's system are inductive but not DLR-Sahlqvist with respect to this order-type.
Since the execution of $\mathsf{ALBA}^{r}$ is successful on these axioms, using Theorem \ref{thm:canonical}, they are canonical.
\bigskip

\noindent $(1)\;\; \Box(p\to q)\to\Box(\Box p\to\Box q)$
\begin{itemize}
\item[\phantom{(1)}]
$\forall p\forall q\forall\nomi\forall\cnomm[(\nomi\leq \Box (p\rightarrow q)\; \& \;\Box(\Box p\rightarrow\Box q)\leq \cnomm)\Rightarrow \nomi\leq \cnomm]$

\vspace{2mm}
$\forall p\forall q\forall\nomi\forall\cnomm[(\nomi\leq\Box\top\;\&\;\Diamondblack\nomi\leq p\rightarrow q\;\&\; \Box(\Box p\rightarrow
\Box q)\leq \cnomm)\Rightarrow \nomi\leq \cnomm]$
\vspace{2mm}

$\forall p\forall q\forall\nomi\forall\cnomm[(\nomi\leq\Box\top\;\&\;\Diamondblack\nomi\land p\leq q\;\&\; \Box(\Box p\rightarrow
\Box q)\leq \cnomm)\Rightarrow \nomi\leq \cnomm]$

\vspace{2mm}
$\forall p\forall q\forall\nomi\forall\cnomm\forall\cnomn[(\nomi\leq\Box\top\;\&\;\Diamondblack\nomi\land p\leq q\;\&\;((\Box\top\leq \cnomm) \;\parr\; (\Box\cnomn\leq \cnomm\; \&\; \Box p \rightarrow
\Box q\leq \cnomn)))\Rightarrow \nomi\leq \cnomm]$

\vspace{2mm}
$\forall p\forall q\forall\nomi\forall\cnomm[(\nomi\leq\Box\top\;\&\;\Diamondblack\nomi\land p\leq q\;\&\;\Box\top\leq \cnomm)\Rightarrow \nomi\leq \cnomm]\;\&$

$\;\forall p\forall q\forall\nomi\forall\cnomm\forall\cnomn[(\nomi\leq\Box\top\;\&\;\Diamondblack\nomi\land p\leq q\;\&\;\Box\cnomn\leq \cnomm\; \&\; \Box p \rightarrow
\Box q\leq \cnomn)\Rightarrow \nomi\leq \cnomm]$.
\end{itemize}
Notice that the first of the two quasi-inequalities above is a tautology: indeed, $\nomi\leq\Box\top$ and $\Box\top\leq \cnomm$ imply $\nomi\leq \cnomm$. Hence, the clause above simplifies to:

\begin{itemize}
\item[\phantom{(1)}] $\forall p\forall q\forall\nomi\forall\cnomm\forall\cnomn[(\nomi\leq\Box\top\;\&\;\Diamondblack\nomi\land p\leq q\;\&\;\Box\cnomn\leq \cnomm\; \&\; \Box p \rightarrow\Box q\leq \cnomn)\Rightarrow \nomi\leq \cnomm]$

\vspace{2mm}

$\forall p\forall q\forall\nomi\forall \nomi_0\forall\cnomm\forall\cnomn\forall\cnomn_0[(\nomi\leq\Box\top\;\&\;\Diamondblack\nomi\land p\leq q\;\&\;\Box\cnomn\leq \cnomm\; \&$

$\;\;\;\;\;\;\;\;\;\;\;\;\;\;\;\;\;\;\;\;\;\;\;\;\;\;\;\;\;\;\;\;\;\nomi_0\leq\Box p\;\&\;\Box q\leq\cnomn_0\;\&\;\nomi_0\rightarrow\cnomn_0\leq\cnomn)\Rightarrow \nomi\leq \cnomm]$

\vspace{2mm}

$\forall p\forall q\forall\nomi\forall \nomi_0\forall\cnomm\forall\cnomn\forall\cnomn_0[(\nomi\leq\Box\top\;\&\;\Diamondblack\nomi\land p\leq q\;\&\;\Box\cnomn\leq \cnomm\; \&$

$\;\;\;\;\;\;\;\;\;\;\;\;\;\;\;\;\;\;\;\;\;\;\;\;\;\;\;\;\;\;\;\;\;\nomi_0\leq\Box\top\; \&\;\Diamondblack\nomi_0\leq p\;\&\;\Box q\leq\cnomn_0\;\&\;\nomi_0\rightarrow\cnomn_0)\leq\cnomn\Rightarrow \nomi\leq \cnomm]$.
\end{itemize}
By applying the right-hand Ackermann rule to $p$, we have:

\begin{itemize}
\item[\phantom{(1)}] $\forall q\forall\nomi\forall \nomi_0\forall\cnomm\forall\cnomn\forall\cnomn_0[(\nomi\leq\Box\top\;\&\;\Diamondblack\nomi\land \Diamondblack\nomi_0\leq q\;\&\;\Box\cnomn\leq \cnomm\; \&\;\nomi_0\leq\Box\top\; \&\;\Box q\leq\cnomn_0\;\&$

$\;\;\;\;\;\;\;\;\;\;\;\;\;\;\;\;\;\;\;\;\;\;\;\;\;\;\;\;\;\;\;\nomi_0\rightarrow\cnomn_0\leq\cnomn)\Rightarrow \nomi\leq \cnomm]$.
\end{itemize}
By applying the right-hand Ackermann rule to $q$, we have:

\begin{itemize}
\item[\phantom{(1)}] $\forall\nomi\forall \nomi_0\forall\cnomm\forall\cnomn\forall\cnomn_0[(\nomi\leq\Box\top\;\&\;\Box\cnomn\leq \cnomm\; \&\;\nomi_0\leq\Box\top\; \&\;\Box(\Diamondblack\nomi\land \Diamondblack\nomi_0)\leq\cnomn_0\; \&\;\nomi_0\rightarrow\cnomn_0\leq\cnomn)\Rightarrow \nomi\leq \cnomm]$,
\end{itemize}
which simplifies to:

\begin{itemize}
\item[\phantom{(1)}]$\forall\nomi\forall \nomi_0[(\nomi\leq\Box\top\; \&\;\nomi_0\leq\Box\top)\; \Rightarrow(\Diamondblack\nomi\land\nomi_0)\leq\Box(\Diamondblack\nomi\land \Diamondblack\nomi_0)]$.
\end{itemize}
Using the standard translation, it simplifies to:

\begin{itemize}
\item[\phantom{(1)}]$\forall i\forall i_0\forall y (N i\land Ni_0\land R i i_0\land Ri_0 y\to R iy) $.
\end{itemize}
\bigskip

Let us show that the axiom (1') is valid on each Kripke frame with impossible worlds.

\noindent $(1')\;\; \Box(p\to q)\to(\Box p\to\Box q)$
\begin{itemize}
\item[\phantom{(1)}]
$\forall p\forall q\forall\nomi\forall\cnomm[(\nomi\leq \Box (p\rightarrow q)\; \& \;\Box p\rightarrow
\Box q\leq \cnomm)\Rightarrow \nomi\leq \cnomm]$\\
$\forall p\forall q\forall\nomi\forall\cnomm[(\nomi\leq\Box\top\;\&\;\Diamondblack\nomi\leq p\rightarrow q\;\&\; \Box p\rightarrow
\Box q\leq \cnomm)\Rightarrow \nomi\leq \cnomm]$\\
$\forall p\forall q\forall\nomi\forall\cnomm[(\nomi\leq\Box\top\;\&\;\Diamondblack\nomi\land p\leq q\;\&\; \Box p\rightarrow
\Box q\leq \cnomm)\Rightarrow \nomi\leq \cnomm]$\\
$\forall p\forall q\forall\nomi\forall\nomj\forall\cnomm\forall\cnomn[(\nomi\leq\Box\top\;\&\;\Diamondblack\nomi\land p\leq q\;\&\; \Box q\leq\cnomn\;\&\; \nomj\leq\Box p\;\&\; \nomj\rightarrow\cnomn\leq \cnomm)\Rightarrow \nomi\leq \cnomm]$\\
$\forall p\forall q\forall\nomi\forall\nomj\forall\cnomm\forall\cnomn[(\nomi\leq\Box\top\;\&\;\Diamondblack\nomi\land p\leq q\;\&\; \Box q\leq\cnomn\;\&\;\Diamondblack\nomj\leq p\;\&\; \nomj\leq\Box\top\;\&\; \nomj\rightarrow\cnomn\leq \cnomm)\Rightarrow \nomi\leq \cnomm]$.
\end{itemize}
By applying the right-hand Ackermann rule to $p$, we have:
\begin{itemize}
\item[\phantom{(1)}]
$\forall q\forall\nomi\forall\nomj\forall\cnomm\forall\cnomn[(\nomi\leq\Box\top\;\&\;\Diamondblack\nomi\land \Diamondblack\nomj\leq q\;\&\; \Box q\leq\cnomn\;\&\; \nomj\leq\Box\top\;\&\; \nomj\rightarrow\cnomn\leq \cnomm)\Rightarrow \nomi\leq \cnomm]$.
\end{itemize}
By applying the right-hand Ackermann rule to $q$, we have:
\begin{itemize}
\item[\phantom{(1)}]
$\forall\nomi\forall\nomj\forall\cnomm\forall\cnomn[(\nomi\leq\Box\top\;\&\;\Box (\Diamondblack\nomi\land \Diamondblack\nomj)\leq\cnomn\;\&\; \nomj\leq\Box\top\;\&\; \nomj\rightarrow\cnomn\leq \cnomm)\Rightarrow \nomi\leq \cnomm]$.
\end{itemize}
In order to show that the clause above is a tautology, let us further simplify it by rewriting it as follows:
\begin{itemize}
\item[\phantom{(1)}]
$\forall\nomi\forall\nomj\forall\cnomn[(\nomi\leq\Box\top\;\&\;\Box (\Diamondblack\nomi\land \Diamondblack\nomj)\leq\cnomn\;\&\; \nomj\leq\Box\top)\Rightarrow \nomi\leq\nomj\rightarrow\cnomn]$\\
$\forall\nomi\forall\nomj\forall\cnomn[(\nomi\leq\Box\top\;\&\;\Box (\Diamondblack\nomi\land \Diamondblack\nomj)\leq\cnomn\;\&\; \nomj\leq\Box\top)\Rightarrow \nomi\land\nomj\leq\cnomn]$\\
$\forall\nomi\forall\nomj[(\nomi\leq\Box\top\;\&\;\nomj\leq\Box\top)\Rightarrow \nomi\land\nomj\leq\Box (\Diamondblack\nomi\land \Diamondblack\nomj)]$\\
$\forall\nomi\forall\nomj[(\nomi\leq\Box\top\;\&\;\nomj\leq\Box\top)\Rightarrow \nomi\land\nomj\leq\Box\Diamondblack\nomi\land\Box\Diamondblack\nomj]$\\
$\forall\nomi\forall\nomj[(\nomi\leq\Box\top\;\&\;\nomj\leq\Box\top)\Rightarrow (\nomi\land\nomj\leq\Box\Diamondblack\nomi\;\;\&\;\;\nomi\land\nomj\leq\Box\Diamondblack\nomj)]$\\
$\forall\nomi\forall\nomj[(\nomi\leq\Box\top\;\&\;\nomj\leq\Box\top)\Rightarrow \nomi\land\nomj\leq\Box\Diamondblack\nomi]\;\&\;\forall\nomi\forall\nomj[(\nomi\leq\Box\top\;\&\;\nomj\leq\Box\top)\Rightarrow \nomi\land\nomj\leq\Box\Diamondblack\nomj].$
\end{itemize}
The two quasi-inequalities above are respectively implied by
\begin{center}
$\forall\nomi(\nomi\leq\Box\top\Rightarrow\nomi\leq\Box\Diamondblack\nomi)\mbox{\ \ and\ \ }\forall\nomj(\nomj\leq\Box\top\Rightarrow\nomj\leq\Box\Diamondblack\nomj)$.
\end{center}
By applying the adjunction rule to the inequality in the consequent, we get:
\begin{center}
$\forall\nomi(\nomi\leq\Box\top\Rightarrow\nomi\leq\Box\top\;\&\;\Diamondblack\nomi\leq\Diamondblack\nomi)$,
\end{center}
which is implied by the tautology
\begin{center}
$\forall\nomi(\Diamondblack\nomi\leq\Diamondblack\nomi)$.
\end{center}
The correspondence results obtained above are summarized in the following tables:
\begin{center}
\begin{table}[H]
\begin{tabular}{|c|l|}
\hline
\textbf{Elementary frame condition} & \textbf{First-order formula}\\
\hline
Normality & $\forall x Nx$\\
\hline
Closure under normality & $\forall x\forall y (Nx\land Rxy\to Ny)$\\
\hline
Pre-normal reflexivity& $\forall x (Nx\to Rxx)$\\
\hline
Pre-normal transitivity& $\forall x \forall y\forall z(Nx\land Ny\land Rxy\land Ryz\to Rxz)$\\
\hline
Pre-normal euclideanness & $\forall x \forall y\forall z(Nx\land Ny\land Rxy\land Rxz\to Ryz)$\\
\hline
\end{tabular}
\caption{Elementary frame conditions}
\label{table:elementary}
\end{table}
\end{center}

\begin{center}
\begin{table}[H]
\begin{tabular}{|c|l|}
\hline
\textbf{Modal axiom} & \textbf{Elementary frame condition}\\
\hline
$\Box p\rightarrow p$ & Pre-normal reflexivity\\
\hline
$\Box p\rightarrow \Box\Box p$ & Pre-normal transitivity and closure under normality\\
\hline
$\neg\Box p\rightarrow \Box\neg\Box p$ & Normality and pre-normal euclideanness\\
\hline
$\Box(p\rightarrow q)\rightarrow (\Box p\rightarrow \Box q)$ & $\top$\\
\hline
$\Box(p\rightarrow q)\rightarrow \Box(\Box p\rightarrow \Box q)$ & Pre-normal transitivity\\
\hline
\end{tabular}
\caption{Lemmon's modal axioms and their elementary frame conditions}
\label{table:modal:elementary}
\end{table}
\end{center}

\begin{theorem}\label{thm:completeness}
Each of E2, E3, E4, E5 is strongly complete with respect to the class of frames specified in Table \ref{table:lemmonsystem}.
\end{theorem}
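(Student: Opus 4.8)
The plan is to run the standard Sahlqvist-completeness-via-canonicity argument, assembling ingredients that are already in place: the algebraic completeness of the basic regular modal logic with respect to r-BAEs, the canonicity of Lemmon's axioms established in Part~I, the discrete duality between perfect r-BAEs and Kripke frames with impossible worlds from Section~\ref{sec:prelimi}, and the correspondence computations of Subsection~\ref{ssec:alba on E2 E4}.

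First I would fix $i\in\{2,3,4,5\}$, let $\Sigma_i$ be the finite set of modal axioms in the axiomatisation of $E_i$ (cf.\ page~\pageref{pageref:lemmonsystem}), and note that, by Corollary~\ref{cor:regular}, $E_i$ is the extension of the basic regular modal logic by $\Sigma_i$. Since the basic regular modal logic is sound and strongly complete with respect to the class of r-BAEs (cf.\ \cite[Theorem~7.5]{HH03}), a routine Lindenbaum--Tarski argument then gives that $E_i$ is sound and strongly complete with respect to the variety $\mathsf{V}_i$ of r-BAEs validating $\Sigma_i$ (each axiom $A\to B$ being read as the inequality $A\le B$). Concretely, I would use this to extract, from any $E_i$-consistent set of formulas $\Gamma$, a non-degenerate $\mathbb{A}\in\mathsf{V}_i$ and an assignment $h$ on $\mathbb{A}$ with $h(\gamma)=\top$ for every $\gamma\in\Gamma$.

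Next I would transfer this countermodel to a Kripke frame with impossible worlds by means of canonicity. By Example~\ref{eg:lemmonsahlqvist}, each axiom in $\Sigma_i$ is DLR-Sahlqvist for a suitable order type, hence canonical by Theorem~\ref{thm:additivecan}; so $\mathsf{V}_i$ is closed under canonical extensions and $\mathbb{A}^\delta\in\mathsf{V}_i$. Since the additive operation of an r-BAE is smooth (Lemma~\ref{lem:smooth}) and its $\sigma$-extension agrees with it on $\mathbb{A}$, the canonical embedding $e\colon\mathbb{A}\hookrightarrow\mathbb{A}^\delta$ is an r-BAE embedding, so $h':=e\circ h$ is an assignment on the perfect r-BAE $\mathbb{A}^\delta$ that still satisfies all of $\Gamma$. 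By the discrete duality, $\mathbb{A}^\delta\cong\mathbb{F}^+$ where $\mathbb{F}:=(\mathbb{A}^\delta)_+$, and $h'$ corresponds under this isomorphism to a valuation $V$ on $\mathbb{F}$; in the model $(\mathbb{F},V)$ every formula of $\Gamma$ is globally true, so $\Gamma$ is satisfiable there.

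Finally I would verify that $\mathbb{F}$ lies in the class specified in Table~\ref{table:lemmonsystem}. As $\mathbb{A}^\delta\cong\mathbb{F}^+$ validates each $\psi\in\Sigma_i$, I would invoke the safe and successful runs of $\mathsf{ALBA}^r$ on these axioms carried out in Subsection~\ref{ssec:alba on E2 E4}, together with the soundness of $\mathsf{ALBA}^r$ (Theorem~\ref{Crctns:Theorem}) and the standard-translation Lemma~\ref{lemma:Standard:Translation}, to conclude that $\mathbb{F}$ satisfies the first-order correspondent of each $\psi$ recorded in Table~\ref{table:modal:elementary}; collecting these over $\psi\in\Sigma_i$ yields exactly the first-order theory attached to $E_i$ in Table~\ref{table:lemmonsystem}, and the proof is complete. (For $i=5$ I would also remark that the condition forced by axiom~(5) entails $\forall x\,Nx$, so $\mathbb{F}$ has no impossible worlds and is an S5-frame, recovering Kripke's statement that $E5$ coincides with S5.) Most of the work is already done in Parts~I and~II; the only delicate points are phrasing the algebraic completeness correctly for the regular (non-normal) consequence relation and carefully tracking valuations along the chain $\mathbb{A}\hookrightarrow\mathbb{A}^\delta\cong\mathbb{F}^+$, and I expect the former to be the main thing to get right.
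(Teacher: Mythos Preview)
Your proposal is correct and follows essentially the same approach as the paper: invoke canonicity of Lemmon's axioms (the paper cites Proposition~\ref{thm:lemmonalgcan}, you cite Example~\ref{eg:lemmonsahlqvist} plus Theorem~\ref{thm:additivecan}, which amounts to the same thing), use the safe successful \textsf{ALBA}$^r$ runs together with Theorem~\ref{Crctns:Theorem} for correspondence, and conclude strong completeness. The paper's own proof is extremely terse and leaves implicit exactly the steps you spell out---the algebraic strong completeness with respect to r-BAEs, the passage $\mathbb{A}\hookrightarrow\mathbb{A}^\delta\cong\mathbb{F}^+$, and the translation of the pure output into first-order frame conditions via Lemma~\ref{lemma:Standard:Translation}---so your write-up is in fact a faithful unpacking of what the paper gestures at rather than an alternative route.
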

\begin{proof}
The canonicity of axioms (1), (1'), (2), (4) and (5) is shown in Proposition \ref{thm:lemmonalgcan}.
As is shown above, $\mathsf{ALBA}^{r}$ (safely) succeeds on each of them. Hence, by Theorem \ref{Crctns:Theorem}, the statement follows.
\end{proof}
\begin{center}
\begin{table}[H]
\begin{tabular}{|c|l|}
\hline
\textbf{Lemmon's system} & \textbf{Elementary class of frames}\\
\hline
E2 & Pre-normal reflexivity\\
\hline
E3 & Pre-normal reflexivity and pre-normal transitivity \\
\hline
E4 & Pre-normal reflexivity, pre-normal transitivity and closure under normality\\
\hline
E5 & Pre-normal reflexivity, pre-normal euclideanness and normality\\
\hline
\end{tabular}
\caption{Lemmon's systems and their elementary classes of frames}
\label{table:lemmonsystem}
\end{table}
\end{center}
Notice that any Kripke frame with impossible worlds which satisfies pre-normal reflexivity, pre-normal euclideanness and normality can be uniquely associated with a standard Kripke frame, the binary relation of which is an equivalence relation. Conversely, any such standard Kripke frame can be uniquely associated with a Kripke frame with impossible worlds which satisfies pre-normal reflexivity, pre-normal euclideanness and normality. These observations provide a semantic proof of Kripke's statement that E5 coincides with S5.

\bibliographystyle{abbrv}
\bibliography{aiml14}
\end{document}